\newtheorem{lem}{Lemma}[section]
\newtheorem{thm}[lem]{Theorem}
\newtheorem{prop}[lem]{Proposition}
\newtheorem{cor}[lem]{Corollary}
\newtheorem{conj}[lem]{Conjecture}
\theoremstyle{definition}
\newtheorem{remark}[lem]{Remark}
\DeclareMathAlphabet{\curly}{U}{rsfs}{m}{n}
\newcommand{\tors}{\operatorname{tors}}
\newcommand{\Aut}{\operatorname{Aut}}
\newcommand{\Gal}{\operatorname{Gal}}
\newcommand{\Pic}{\operatorname{Pic}}
\newcommand{\gon}{\operatorname{gon}}
\newcommand{\Q}{\mathbb{Q}}
\newcommand{\C}{\mathbb{C}}
\newcommand{\Z}{\mathbb{Z}}
\newcommand{\F}{\mathbb{F}}
\newcommand{\SL}{\operatorname{SL}}
\newcommand{\Div}{\operatorname{Div}}
\newcommand{\red}{\operatorname{red}}
\def\diam#1{\langle#1\rangle}
\newcommand{\fp}{\mathfrak p}
\DeclareMathOperator{\dv}{{div}}
\newcommand{\PP}{{\mathbb P}}
\DeclareMathOperator{\cf}{{Cliff}}
\DeclareMathOperator{\cd}{{CD}}
\mathchardef\mhyphen="2D
\title{Gonality of the modular curve $X_0(N)$}
\author{\sc Filip Najman}
\address{Filip Najman \\
University of Zagreb\\  
Bijeni\v{c}ka Cesta 30 \\
10000 Zagreb\\
Croatia}
\email{fnajman@math.hr}
\urladdr{http://web.math.pmf.unizg.hr/~fnajman}
\author{\sc Petar Orli\'c}
\address{Petar Orli\'c \\
University of Zagreb\\  
Bijeni\v{c}ka Cesta 30 \\
10000 Zagreb\\
Croatia}
\email{petar.orlic@math.hr}
\begin{document}
\begin{abstract}
    In this paper we determine the $\Q$-gonalities of the modular curves $X_0(N)$ for all $N<145$. We determine the $\C$-gonality of many of these curves and the $\Q$-gonalities and $\C$-gonalities for many larger values of $N$. 
    
    Using these results and some further work, we determine all the modular curves $X_0(N)$ of gonality $4$, $5$ and $6$ over $\Q$. We also find the first known instances of pentagonal curves $X_0(N)$ over $\C$.
\end{abstract}

\subjclass{11G18, 14H35, 14H51}
\keywords{Modular curves, Gonality}

\thanks{The authors were supported by the QuantiXLie Centre of Excellence, a
  project co-financed by the Croatian Government and European Union
  through the European Regional Development Fund - the Competitiveness
  and Cohesion Operational Programme (Grant KK.01.1.1.01.0004) and by
  the Croatian Science Foundation under the project
  no. IP-2018-01-1313.}

\maketitle

\section{Introduction}
Let $k$ be a field and $C$ a curve over $k$ (throughout the paper we assume all curves are geometrically integral). The \textit{gonality} $\gon_k C$ of $C$ over $k$ is defined to be the least degree of a non-constant morphism $f:C\rightarrow \PP^1$, or equivalently the least degree of a non-constant function $f\in k(C)$.

Gonalities of modular curves and their quotients have been the subject of extensive research by many people. The study of gonalities of the classical modular curve $X_0(N)$ started with Ogg \cite{Ogg74}, who determined the hyperelliptic modular curves $X_0(N)$. Hasegawa and Shimura \cite{HasegawaShimura_trig} determined both the $X_0(N)$ that are trigonal over $\C$ and the $X_0(N)$ that are trigonal over $\Q$ and Jeon and Park \cite{JeonPark05} determined the $X_0(N)$ that are tetragonal over $\C$. More generally, Abramovich \cite{abramovich} gave a lower bound for the gonality over $\C$ for any modular curve (which is usually not sharp). 

In this paper we will study the gonalities of the modular curves $X_0(N)$ over $\Q$ instead of over $\C$. The motivation for this comes from two directions. 

Firstly, the $\Q$-gonality of a curve is arguably more interesting from an arithmetical point of view than its $\C$-gonality. For example, when one wants to determine the modular curves $X_0(N)$ and $X_1(M,N)$ with infinitely many degree $d$ points (over $\Q$), a question of fundamental arithmetical importance, as these curves parametrise elliptic curves with level structures, then determining all such curves of gonality $d$ plays a key role. 

Using the gonality of the modular curves as one of the main ingredients, all the modular curves $X_1(M,N)$ with infinitely many degree $d$ points have been determined for $d=2$ by Mestre \cite{Mestre81}, $d=3$ by Jeon, Kim and Schweizer \cite{JeonKimSchweizer04}, for $d=4$ by Jeon, Kim and Park \cite{JeonKimPark06} and for $d=5,6$ by Derickx and Sutherland \cite{DerickxSutherland17}. The same problem has been solved for the modular curves $X_0(N)$ for $d=2$ by Bars \cite{Bars99} and for $d=3$ by Jeon \cite{Jeon2021}. 


The other motivation comes from the database, which is in construction, of modular curves that will be incorporated in the LMFDB \cite{lmfdb}, which tabulates $L$-functions, modular forms, elliptic curves and related objects. At the moment of writing of this paper, there were 115 modular curves $X_0(N)$ in LMFDB, all with $N\leq 127$. The exact $\Q$-gonality was listed as known for less than half of them. Our work determines the $\Q$-gonality for all the $N$ in this database. 

Although our interest lies primarily in $\Q$-gonalities, we compute and document the $\C$-gonality wherever possible. Our main result is the following theorem.

\begin{thm}\label{main_tm}
The $\Q$-gonalities and $\C$-gonalities of $X_0(N)$ are as listed in \Cref{tab:main}, \Cref{tab:main2} and \Cref{tab:large}.
\end{thm}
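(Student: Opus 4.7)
The plan is to establish the theorem one level $N$ at a time, producing matching upper and lower bounds for $\gon_\Q X_0(N)$, and (where asserted) for $\gon_\C X_0(N)$. Since any morphism defined over $\Q$ is \emph{a fortiori} defined over $\C$, we have $\gon_\C X_0(N) \leq \gon_\Q X_0(N)$, so an upper bound over $\Q$ gives one over $\C$ and a lower bound over $\C$ gives one over $\Q$; the two columns of the tables are therefore not independent, and much of the work reduces to either an explicit construction of a low-degree function in $\Q(X_0(N))$ or the exclusion of a low-degree function in $\C(X_0(N))$.

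For the upper bounds, the primary tools I would use are: (i) the forgetful map $j\colon X_0(N) \to X(1) = \PP^1$ of degree $[\SL_2(\Z):\Gamma_0(N)]$; (ii) Atkin--Lehner quotient maps $X_0(N) \to X_0(N)/W$ followed by any known low-degree map from the quotient, many of which are rational, elliptic, or hyperelliptic and have been classified; (iii) composition with the modular projections $X_0(N) \to X_0(M)$ for $M \mid N$; and (iv) direct construction of rational functions as ratios of eta quotients or of Eisenstein series, whose degree is read off from pole orders at the cusps via $q$-expansions. The hyperelliptic, trigonal, and tetragonal $\C$-cases are already covered by Ogg, Hasegawa--Shimura, and Jeon--Park, and we would simply cite those.

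For the lower bounds, several complementary techniques must be combined. Abramovich's bound $\gon_\C X_\Gamma \geq (7/800)[\SL_2(\Z) : \Gamma]$ is a coarse but unconditional estimate that rules out small gonality for large $N$. Sharper information comes from Clifford's theorem and the Castelnuovo--Severi inequality, the latter forbidding a curve of genus $g$ from admitting simultaneous maps of degrees $d_1,d_2$ to lower-genus curves unless $g \leq (d_1-1)(d_2-1)$; applied to Atkin--Lehner quotient maps this rules out many potential factorizations of low-degree morphisms. Over $\Q$ the key additional input is reduction modulo a prime $p$ of good reduction: since $\gon_\Q C \geq \gon_{\F_p} C_{\F_p}$, any lower bound over a finite field transfers to $\Q$, and the elementary inequality
\[
\gon_{\F_p} C \cdot |\PP^1(\F_{p^r})| \geq |C(\F_{p^r})|
\]
combined with explicit point counts on $X_0(N)_{\F_p}$ gives strong lower bounds.

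The main obstacle is two-fold. First, when $\gon_\C X_0(N)$ is genuinely small (pentagonal or hexagonal) but Abramovich and Castelnuovo--Severi are too weak to exclude a smaller value, one has to study the Brill--Noether locus $W^1_d(X_0(N)) \subset \Jac X_0(N)$ and either produce a $g^1_d$ or prove emptiness via the isogeny decomposition of the Jacobian; this is where the new pentagonal examples advertised in the abstract would be pinned down. Second, for the values of $N$ where a genuine gap $\gon_\C X_0(N) < \gon_\Q X_0(N)$ is expected, the $\C$-construction must coexist with a $\Q$-obstruction, typically obtained by combining reduction-mod-$p$ point counts with an argument that rules out a $\Gal(\overline{\Q}/\Q)$-stable $g^1_d$; producing both ingredients simultaneously for the same $N$ is the most delicate part of the analysis.
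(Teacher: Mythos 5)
Your overall strategy---matching upper and lower bounds level by level, with upper bounds from Atkin--Lehner quotients, degeneracy maps and explicit functions, and lower bounds from reduction modulo $p$, point counts over $\F_{p^2}$ and the Castelnuovo--Severi inequality---is exactly the strategy of the paper, whose proof of this theorem is precisely the aggregation of such case-by-case bounds until they meet. Two concrete tools are missing from your toolkit, however, and without them a substantial number of table entries cannot be completed. First, for the $\C$-gonality lower bound of $6$ (ruling out a $g_5^1$) on curves such as $X_0(84)$, $X_0(86)$, $X_0(90)$, $X_0(93)$, $X_0(106)$, $X_0(115)$, $X_0(127)$, $X_0(128)$, $X_0(133)$, $X_0(137)$, none of your methods applies: reduction modulo $p$ only bounds the $\Q$-gonality, Abramovich's bound is far too weak at these levels, and Castelnuovo--Severi fails because the relevant quotient has too large a genus (e.g.\ for $N=84$ one has $g=11$ and $g(X_0^+(84))=4$, so $11\le 2\cdot4+(2-1)(5-1)=12$ and no contradiction arises for a degree-$5$ map). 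The paper instead uses the proven direction of Green's conjecture (Green--Lazarsfeld, \Cref{nog51}): the vanishing of the graded Betti number $\beta_{3,2}$ of the canonical embedding excludes a $g_5^1$. Your suggestion to ``prove emptiness of $W_d^1$ via the isogeny decomposition of the Jacobian'' is not a method over $\C$; over $\Q$ it is realized in the paper as a Mordell--Weil sieve on $W_d^1$ exploiting that $J_0(N)^-(\Q)$ has rank $0$ (\Cref{sec:MW}), and you would need to make that sieve explicit to settle $N=97,133,145$.

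Second, you omit the Tower theorem (\Cref{tm:TT}, \Cref{cor:TT}, \Cref{Jeon-Park}). Several $\Q$-gonality \emph{upper} bounds are obtained by knowing only that a curve or quotient is trigonal (or tetragonal) over $\C$ and then descending the map to $\Q$ using genus hypotheses and the existence of a rational point---for instance the hexagonal entries for $N=122,146,147,162,164,181,227$ come from genus $\ge 5$ quotients $X_0(N)/w_d$ that are trigonal over $\C$, hence over $\Q$ by the Tower theorem. Your remark that ``an upper bound over $\Q$ gives one over $\C$'' goes only in the unhelpful direction here. Finally, note that a proof of this theorem is ultimately a machine-assisted verification of each entry: even with the toolkit completed, the Riemann--Roch searches over $\F_p$, the Betti number computations, and the torsion computations in $J_0(N)(\Q)$ must actually be carried out, so your text is a correct plan rather than a proof.
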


One immediate consequence of our result is the determination of all $X_0(N)$ that are tetragonal over $\Q$. A curve that is tetragonal over $\Q$ has to have gonality $\leq 4$ over $\C$ and all curves satisfying this are known by the aforementioned results \cite{Ogg74, HasegawaShimura_trig, JeonPark05}. As we determine in \Cref{main_tm} all $N$ satisfying this and that have gonality $4$ over $\Q$ the following result immediately follows.

\begin{thm}\label{thm:tetragonal}
The modular curve $X_0(N)$ is tetragonal over $\Q$ if and only if 
\begin{align*}N\in \{&38,42,44,51,52,53,55,56,57,58,60,61,62,63,65,66,67,68,69,70,72,73,74,75,\\
&
77,78,79,80,83,85,87,88,89,91,92,94,95,96,98,100,101,103,104,107,111,119,\\
&121,125,131,142,143,167,191\}.\end{align*}
\end{thm}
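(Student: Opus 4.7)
The plan is to derive the theorem as an immediate corollary of \Cref{main_tm} once we restrict attention to the (a priori finite) list of $N$ for which $\gon_\C X_0(N) \le 4$. The starting point is the trivial inequality $\gon_\C X_0(N) \le \gon_\Q X_0(N)$: any $X_0(N)$ tetragonal over $\Q$ must already satisfy $\gon_\C X_0(N) \le 4$, so the candidate $N$ form a finite set.

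First I would assemble the finite set $S$ of $N$ with $\gon_\C X_0(N) \le 4$ by combining four inputs: the $N$ with $X_0(N)$ of genus $0$ or $1$ (so $\gon_\C \le 2$), Ogg's list \cite{Ogg74} of hyperelliptic $N$, the Hasegawa--Shimura list \cite{HasegawaShimura_trig} of $\C$-trigonal $N$, and the Jeon--Park list \cite{JeonPark05} of $\C$-tetragonal $N$. From $S$ I would then remove the $N$ with $\gon_\Q X_0(N) < 4$: the genus-$0$ cases (all of which carry a rational point and hence have $\Q$-gonality $1$), the Ogg hyperelliptic $N$ (each hyperelliptic over $\Q$ as well, since its hyperelliptic involution is Atkin--Lehner and therefore $\Q$-rational), and the $\Q$-trigonal $N$, also classified by Hasegawa--Shimura.

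For each remaining $N \in S$, \Cref{main_tm} supplies the exact value of $\gon_\Q X_0(N)$ via \Cref{tab:main}, \Cref{tab:main2}, and \Cref{tab:large}. Reading off those $N$ with $\gon_\Q X_0(N) = 4$ produces the list in the statement and delivers both directions of the biconditional in one step.

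There is no serious technical obstacle: the substantive content is already packaged in \Cref{main_tm} together with the three earlier classifications, and the present result is extracted by intersection and bookkeeping. The one place where an error could creep in is the possibility that some $N$ with $\gon_\C X_0(N) = 4$ secretly has $\gon_\Q X_0(N) = 3$; this is precluded by the completeness of the Hasegawa--Shimura $\Q$-trigonal classification, which pins down every $\Q$-trigonal $N$ in advance and lets the filtering above close the argument.
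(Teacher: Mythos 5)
Your proposal is correct and is essentially the paper's own argument: reduce to the finite list of $N$ with $\gon_\C X_0(N)\le 4$ (known from \cite{Ogg74}, \cite{HasegawaShimura_trig}, \cite{JeonPark05}, with the paper invoking the bound $\gon_\C X_0(N)\ge 5$ for $N\ge 192$ to the same effect), discard the $N$ of $\Q$-gonality $\le 3$, and read the rest off \Cref{main_tm}. One minor slip in a side remark: the hyperelliptic involution of $X_0(N)$ is not always of Atkin--Lehner type (e.g.\ $N=37$); the correct reason the hyperelliptic curves are hyperelliptic over $\Q$ is that the hyperelliptic involution is unique, hence $\Q$-rational, and the genus-$0$ quotient has a rational point coming from a cusp.
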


After \Cref{thm:tetragonal} and the aforementioned results \cite{Ogg74, HasegawaShimura_trig, JeonPark05} which determine all $X_0(N)$ of $\C$-gonality $\leq 4$, the question of determining the pentagonal, and after that, hexagonal (both over $\Q$ and over $\C$) curves $X_0(N)$ naturally arises. Surprisingly, there seems to have been no known curve that is pentagonal either over $\Q$ or over $\C$ (at least to our knowledge); see \cite[p.139-140]{HasegawaShimura_trig} for a short discussion stating this.  As a byproduct of our results and with some additional work, we determine all $X_0(N)$ that are pentagonal of hexagonal over $\Q$.

\begin{thm}\label{thm:pentagonal}
The modular curve $X_0(N)$ is pentagonal over $\Q$ if and only if $N=109$.
\end{thm}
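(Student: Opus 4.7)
The plan is to reduce \Cref{thm:pentagonal} to a finite tabulation. Any $X_0(N)$ that is pentagonal over $\Q$ must satisfy $\gon_\C X_0(N) \leq 5$, so I would first assemble the candidate set $\mathcal{S} = \{N : \gon_\C X_0(N) \leq 5\}$ from (i) the hyperelliptic, $\C$-trigonal, and $\C$-tetragonal classifications of Ogg \cite{Ogg74}, Hasegawa-Shimura \cite{HasegawaShimura_trig} and Jeon-Park \cite{JeonPark05}, and (ii) the new classification of $\C$-pentagonal levels alluded to in the abstract. For $N$ outside a reasonable candidate window, an Abramovich-type lower bound
$$\gon_\C X_0(N) \;\geq\; \frac{\lambda_1}{24}\,[\PSL_2(\Z) : \overline{\Gamma_0(N)}]$$
with the Kim-Sarnak improvement $\lambda_1 \geq 975/4096$ forces $\gon_\C X_0(N) \geq 6$, trimming $\mathcal{S}$ to an explicit finite list.

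Next I would sweep through $\mathcal{S}$. Values $N$ appearing in \Cref{thm:tetragonal} or in its hyperelliptic/trigonal analogues give $\gon_\Q X_0(N) \leq 4$ and are discarded. For the remaining levels, the precise $\Q$-gonality is supplied by the tables underlying \Cref{main_tm}, and a direct inspection should leave $N = 109$ as the unique entry with $\gon_\Q = 5$. The verification for $N = 109$ itself breaks into two halves: the bound $\gon_\Q X_0(109) \geq 5$ follows because $X_0(109)$ (of genus $8$) is absent from the hyperelliptic, $\Q$-trigonal, and $\Q$-tetragonal lists; the bound $\gon_\Q X_0(109) \leq 5$ requires exhibiting an explicit degree-$5$ morphism $X_0(109) \to \PP^1$ over $\Q$, produced in the course of proving \Cref{main_tm} (for instance as a $g^1_5$ on a canonical model pulled back from a rational quotient of $J_0(109)$).

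The genuinely nontrivial step is the determination of $\mathcal{S}$ in its $\gon_\C = 5$ stratum and the certification that for every borderline $N \in \mathcal{S}$ with $N \neq 109$ the $\Q$-gonality strictly exceeds $5$. For mid-range $N$ where Abramovich's bound is not yet sharp but the tetragonal classification of \cite{JeonPark05} does not apply, I would invoke the Castelnuovo-Severi inequality through Atkin-Lehner quotients $X_0(N)/\langle w_d\rangle$, combined with the natural maps $X_0(N) \to X_0(M)$ for proper divisors $M \mid N$ whose gonalities are already tabulated, to force $\gon_\C X_0(N) \geq 6$. Every remaining part of the argument is bookkeeping against the tables of \Cref{main_tm}.
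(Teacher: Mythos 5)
Your overall skeleton matches the paper's: verify $N=109$ directly (explicit degree-$5$ function for the upper bound, a computation ruling out degree $\leq 4$ for the lower bound), discard all $N$ with $\gon_\Q X_0(N)\leq 4$, reduce the remaining $N$ to a finite list via a lower bound on $\gon_\C$ for large $N$, and read the rest off the tables of \Cref{main_tm}. The substantive difference is the tool you choose for the large-$N$ reduction. The paper does not use Abramovich's spectral bound here; it invokes \cite[Proposition 4.4]{HasegawaShimura_trig} (recalled as \Cref{prop:gon_ge_6}), which rests on Ogg's inequality $L_p(N)\leq \#X_0(N)(\F_{p^2})$ combined with \Cref{lem:fp}, and which already gives $\gon_\C X_0(N)\geq 6$ for all $N\geq 198$. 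Your Abramovich--Kim--Sarnak bound $\gon_\C \geq \tfrac{975}{4096\cdot 24}\,\psi(N)\approx 0.0099\,\psi(N)$ only certifies $\gon_\C\geq 6$ once $\psi(N)\gtrsim 605$, so for prime levels it leaves a window reaching up to roughly $N\approx 600$ rather than $N\approx 200$. Your proposed patch for that window (Castelnuovo--Severi through Atkin--Lehner quotients and degeneracy maps) is not obviously adequate there: for prime $N$ the only quotient available is $X_0^+(N)$, whose gonality is itself unknown in that range, so you would end up doing substantially more case-by-case work than the paper; the Ogg-type point count is the cheaper and sharper instrument. Two smaller points: the paper does \emph{not} classify the $\C$-pentagonal levels (it only exhibits $N=97,169$ as instances, and many table entries have $\gon_\C$ pinned only to $[5,6]$), so your set $\mathcal{S}$ must be taken as an over-approximation of $\{N:\gon_\C\leq 5\}$ -- which is harmless but worth stating; and your lower bound $\gon_\Q X_0(109)\geq 5$ via ``absence from the $\Q$-tetragonal list'' is really the $\F_3$-gonality computation of \Cref{prop:fp} in disguise, since $X_0(109)$ \emph{is} tetragonal over $\C$ and only a genuine computation separates the two fields.
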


\begin{thm}\label{thm:hexagonal}
The modular curve $X_0(N)$ is hexagonal over $\Q$ if and only if
\begin{align*} N\in 
\{&76,82,84,86,90,93,97,99,108,112,113,115-118,122-124,127-129,135,\\
&137,139,141,144,146,147,149,151,155,159,162,164,169,179,181,215,227,239\}.\end{align*}
\end{thm}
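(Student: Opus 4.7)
The plan is to combine Theorem \ref{main_tm} with targeted arguments for the finitely many $N$ that lie beyond the tabulated range. I would split the proof into two implications.

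For the forward direction (each listed $N$ really is hexagonal over $\Q$), upper bounds are read directly off \Cref{tab:main}, \Cref{tab:main2}, \Cref{tab:large} whenever possible; for the remaining $N$ in the list I would exhibit an explicit degree-$6$ morphism $X_0(N) \to \PP^1$ over $\Q$. The workhorse constructions are: (i) pick a divisor $M \mid N$ with $\gon_\Q X_0(M)$ small, and compose the projection $X_0(N) \to X_0(M)$ (of degree $[\Gamma_0(M):\Gamma_0(N)]$) with a low-degree map on $X_0(M)$, choosing the data so that the product equals $6$; and (ii) use an Atkin–Lehner quotient, lifting a degree-$3$ map $X_0(N)/\langle w_d\rangle \to \PP^1$ to a degree-$6$ map on $X_0(N)$ via the quotient of degree $2$. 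The matching lower bound $\gon_\Q X_0(N) \geq 6$ is an immediate consequence of the classification of $X_0(N)$ of $\C$-gonality at most $4$ in \cite{Ogg74, HasegawaShimura_trig, JeonPark05} together with \Cref{thm:pentagonal}: these together exclude $\gon_\Q \leq 5$ for every $N$ on the candidate list.

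For the reverse direction (no other $N$ is hexagonal over $\Q$), the task is to show $\gon_\Q X_0(N) \geq 7$ for every $N \notin \{109\}$ not already treated by \Cref{main_tm}. The main tool is Abramovich's lower bound
\[
\gon_\C X_0(N) \;\geq\; \tfrac{7}{800}\bigl[\PSL_2(\Z):\Gamma_0(N)\bigr],
\]
which together with $\gon_\Q \geq \gon_\C$ forces $\gon_\Q \geq 7$ as soon as the index exceeds $\tfrac{6\cdot 800}{7}$. This eliminates all but finitely many $N$. For the residual cases I would argue individually, combining the Castelnuovo–Severi inequality applied to known subcovers (Atkin–Lehner quotients $X_0(N) \to X_0(N)/\langle w_d\rangle$, whose genera and in several cases gonalities are already computed) with bounds coming from reduction at small primes of good reduction: a hypothetical degree-$6$ map $X_0(N) \to \PP^1_\Q$ reduces to a degree-$6$ map over $\F_p$, forcing $\#X_0(N)(\F_p) \leq 6(p+1)$, which is violated at explicit small $p$ for most of the leftover $N$.

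The main obstacle I anticipate is the medium-range analysis: values of $N$ where the index $[\PSL_2(\Z):\Gamma_0(N)]$ is too small for Abramovich to deliver $\gon_\Q \geq 7$ directly, yet $N$ falls outside the regime treated by \Cref{main_tm}. For each such $N$ one must carefully match genera, indices of subcovers, and Castelnuovo–Severi constraints, while simultaneously ruling out sporadic degree-$6$ maps not of the above standard shapes. This bookkeeping, together with verifying upper bounds via explicit map constructions for the largest $N$ in the list (e.g.\ $N = 215, 227, 239$), will absorb most of the effort.
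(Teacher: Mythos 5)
Your overall architecture (read the listed $N$ off \Cref{main_tm}, then eliminate every other $N$ by an asymptotic gonality bound plus finitely many individual cases) matches the paper's, but the quantitative engine you chose for the reverse direction leaves a genuine gap. Abramovich's bound $\gon_\C X_0(N)\geq \tfrac{7}{800}\psi(N)$ only forces $\gon_\C\geq 7$ once $\psi(N)>4800/7\approx 686$; since $\psi(N)=N+1$ for prime $N$, this leaves \emph{every} prime up to $683$ and several hundred composite levels as ``residual cases.'' The tools you propose for those cases do not close them: the reduction-mod-$p$ count $\#X_0(N)(\F_p)\leq 6(p+1)$ is usually vacuous because $X_0(N)(\F_p)$ tends to be tiny (often just the cusps), and Castelnuovo--Severi needs a subcover whose gonality is already controlled, which for prime $N$ is exactly what you are trying to prove. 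The paper's key missing ingredient is Ogg's inequality: counting \emph{supersingular} points gives $\tfrac{p-1}{12}\psi(N)+2^{\omega(N)}\leq \#X_0(N)(\F_{p^2})$, and combining this with $\#X_0(N)(\F_{p^2})\leq 6(p^2+1)$ (i.e.\ \Cref{lem:fp} with $q=p^2$, not $q=p$) already forces $\psi(N)\leq 12\bigl(30-2^{\omega(N)}\bigr)\leq 336$ at $p=2$, and disposes of almost all remaining levels uniformly (\Cref{oggineq}, \Cref{fp2points}); the one level it misses, $N=212$, is handled via the degree-$3$ map to $X_0(106)$, which has $\Q$-gonality $8$ (\Cref{prop212}). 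Without the $\F_{p^2}$ supersingular count your case analysis is not merely long — for most of the leftover $N$ it has no working lower-bound mechanism at all.

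A smaller issue in the forward direction: your claim that the classification of levels with $\gon_\C\leq 4$ together with \Cref{thm:pentagonal} ``excludes $\gon_\Q\leq 5$ for every $N$ on the candidate list'' fails at $N=99$, where $\gon_\C X_0(99)=4$; there the lower bound $\gon_\Q\geq 6$ requires the dedicated $\F_5$-gonality computation (\Cref{prop4.26}). This is harmless if you simply cite \Cref{main_tm} for all tabulated levels, as every $N$ in the list appears in the tables, but the independent argument you sketch is not valid as stated.
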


We also show that $X_0(N)$ is pentagonal over $\C$ for $N=97$ and $169$, obtaining the first known such curves.

Our methods are perhaps most similar to the work of Derickx and van Hoeij \cite{derickxVH}, where they compute the exact $\Q$-gonalities of the modular curves $X_1(N)$ for $N\leq 40$ and give upper bounds for $N\leq 250$. Some of our methods will be similar to the ones in \cite{derickxVH}, but some will be different; the differences arise out of the intrinsic properties of the different modular curves. In particular, on one hand the properties that make $X_0(N)$ easier to deal with is its lower genus and an abundance of involutions (especially for highly composite $N$). On the other hand, $X_0(N)$ has much fewer cusps and hence much fewer modular units, the main tool in \cite{derickxVH} for obtaining upper bounds. Another difficulty with $X_0(N)$, as opposed to $X_1(N)$, is that it is in general hard to obtain reasonable plane models, making computations in function fields much more computationally demanding. 

We now give a brief description of our methods, and compare them to the methods of \cite{derickxVH}. The way to determine the exact gonality of a modular curve is to give a lower bound and an upper bound for the gonality which match each other. We explain both in more detail and rigor in \Cref{sec:lb} and \Cref{sec:ub}.

The authors of \cite{derickxVH} were (perhaps surprisingly) able to use a single method to obtain lower bounds and a single method to obtain upper bounds. The lower bounds were obtained using the well-known fact that $\gon_{\F_p}(C)\leq \gon_{\Q}(C)$ for a prime of good reduction $p$ of $C$ and by then computing $\gon_{\F_p}(C)$, which is a finite computation. This will be one of our main tools too, together with the Castelnuovo-Severi inequality (see \Cref{tm:CS}) and using $\gon_{\C}(C)\leq \gon_{\Q}(C)$ together with known results about $\C$-gonalities. An especially interesting method, described in \Cref{sec:MW}, is Mordell-Weil sieving on the Brill-Noether varieties $W_d^1(X)$, which we use to show that $X_0(97)$ is of gonality 6 and $X_0(133)$ is of gonality 8 over $\Q$.   To produce lower bounds on the $\C$-gonality, we will use computations on Betti numbers and proven parts of the Green conjecture (see e.g. \Cref{nog51}). There will be a few instances in which we also use other methods.

Derickx and van Hoeij obtained their upper bounds by constructing \textit{modular units} (functions whose polar divisor is supported only on cusps) of a certain degree. In certain instances we will also obtain upper bounds by explicitly constructing functions of degree $d$ by searching in Riemann-Roch spaces of sets of divisors with some fixed support. For us the set of divisors through which we will search through will not be supported only on cusps, but will also include CM points and even non-CM non-cuspidal rational points. We will also construct functions on $X_0(N)$ by finding functions $g$ of degree $k$ on $X_0(N)/w_d$ or $X_0(d)$ for $d|N$ and then pulling them back via the quotient map $f:X_0(N)\rightarrow X_0(N)/w_d$ or $f:X_0(N)\rightarrow X_0(d)$, thus obtaining a map $f^*g$ of degree $k \cdot \deg f$. Apart from this we will also use the Tower theorem (see \Cref{cor:TT}) which allows us to determine the $\Q$-gonality from the $\C$-gonality under certain assumptions.

A lot of our results rely on extensive computation in \texttt{Magma} \cite{magma}. To compute models for $X_0(N)$ and their quotients by Atkin-Lehner involutions, we used the code written by Philippe Michaud-Jacobs as part of an ongoing collaborative project on computing points of low degree on modular curves \cite{quad_pts}. 

It is natural to wonder why we stopped at the point where we did and whether one can determine $\Q$-gonalities of $X_0(N)$ for larger $N$. We discuss this, the complexity of the most computationally demanding parts of our computations, and possible further work briefly in \Cref{sec:future work}. 

The code that verifies all our computations can be found on:
\begin{center}
    \url{https://github.com/orlic1/gonality_X0}.
\end{center}
All of our computations were performed on the \textit{Euler} server at the Department of Mathematics, University of Zagreb with a Intel Xeon W-2133 CPU running at 3.60GHz and with 64 GB of RAM.

\ack{We are grateful to Maarten Derickx for the ideas used in \Cref{sec:MW} that helped resolve the cases $X_0(N)$ for $N=97,133$ and $145$, and to Andreas Schweizer for helpful comments. We are grateful to the anonymous referee for many helpful comments that have greatly improved the exposition.}

\section{Notation}

We now set up notation that will be used throughout the paper. Throughout the paper $p$ will be a prime number and $q$ a power of $p$. For a curve $X$ the genus of $X$ will be denoted by $g(X)$. By $X_0(N)$ we denote the classical modular curve parametrizing pairs $(E, C)$ of generalized elliptic curves together with a cyclic subgroup $C$ of order $N$. 

For any divisor $d$ of $N$ such that $(d,N/d)=1$ the \textit{Atkin-Lehner involution} $w_d$ acts on $(E,C)$ by sending it to $(E/C,E[N]/C)$. The Atkin-Lehner involutions form a subgroup of $\Aut(X_0(N))$ isomorphic to $(\Z/2\Z)^{\omega(N)}$, where $\omega(N)$ is the number of prime divisors of $N$. The curve $X_0(N)$ and all its Atkin-Lehner involutions are defined over $\Q$. They induce involutions on $\Div X_0(N)$ and $\Pic^d X_0(N)$ which we will also denote by $w_N$.

The quotient $X_0(N)/w_N$ is denoted by $X_0^+(N)$ and the quotient of $X_0(N)$ by the whole group of Atkin-Lehner involutions is denoted by $X_0^*(N)$. By $J_0(N)$ we denote the Jacobian of $X_0(N)$ and by $J_0(N)^-:=(1-w_N)J_0(N)$.

\section{Lower bounds}
\label{sec:lb}
In this section we give all the results used to obtain lower bounds for the gonalities of $X_0(N)$. We first mention two obvious lower bounds for a curve $C$ defined over a number field $K$:
\begin{equation}\label{lb:C}
\gon_{\C} C\leq \gon_{K} C,
\end{equation}
and
\begin{equation}
\gon_{\F_\fp} C\leq \gon_{K} C,
\end{equation}
where $\fp$ is a prime ideal of good reduction of $C$ and $\F_\fp$ is the residue field of $\fp$. The determination of $\gon_{\F_\fp} C$ is a finite task, although often a computationally difficult one. More explicitly, $\gon_{\F_\fp} C$ can be determined by checking the Riemann-Roch spaces of all degree $d$ effective $\F_\fp$-rational divisors $D$; the smallest $d$ such that there exists such a divisor $D$ of degree $d$ for which $l(D)\geq 2$ is the $\F_\fp$-gonality of $C$. 

We will be interested only in the case $K=\Q$ and $\fp=p$, a rational prime. The following lemma will be useful in making the computation of $\gon_{\F_p} C$ much quicker. 

\begin{lem}\label{lem:F_p_div_search}
Let $C/\F_p$ be a curve such that $\#C(\F_p)=n$. Suppose that there exists a function $f$ of degree $d$ in $\F_p(C)$. Then:
\begin{itemize}
    \item[a)] There exists a function $g$ of degree $d$ such that its polar divisor is supported on at most $\left \lfloor \frac{n}{p+1} \right \rfloor$ points $P\in C(\F_p)$.
    \item[b)] There exists a function $h$ of degree $d$ such that its polar divisor is supported on at least $\left \lceil \frac{n}{p+1} \right \rceil$ points $P\in C(\F_p)$.
\end{itemize}
\end{lem}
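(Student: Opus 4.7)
The plan is to exploit the natural action of $\operatorname{PGL}_2(\F_p)$ on the target of $f$. For any $\gamma \in \operatorname{PGL}_2(\F_p)$, the composition $\gamma \circ f$ is again an element of $\F_p(C)$ of degree $d$, and its polar divisor is precisely the pullback $f^{*}(\gamma^{-1}(\infty))$. Thus we may freely relocate the pole of our function to be the $f$-preimage of any $\F_p$-rational point of $\PP^1$, and the problem reduces to understanding how $\F_p$-points of $C$ distribute among the fibers of $f$ over $\PP^1(\F_p)$.

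The key combinatorial identity is that every point in $C(\F_p)$ maps to a point of $\PP^1(\F_p)$, so setting $a_Q := |f^{-1}(Q) \cap C(\F_p)|$ we get a partition
\[ \sum_{Q \in \PP^1(\F_p)} a_Q \;=\; n. \]
This is a sum of exactly $p+1$ nonnegative integers with total $n$, so by pigeonhole there must be some $Q_{\min} \in \PP^1(\F_p)$ with $a_{Q_{\min}} \leq \lfloor n/(p+1) \rfloor$ and some $Q_{\max} \in \PP^1(\F_p)$ with $a_{Q_{\max}} \geq \lceil n/(p+1) \rceil$.

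To conclude, choose $\gamma, \gamma' \in \operatorname{PGL}_2(\F_p)$ sending $Q_{\min}$ and $Q_{\max}$ to $\infty$, and set $g := \gamma \circ f$ and $h := \gamma' \circ f$. Both have degree $d$, and their polar divisors $f^{*}(Q_{\min})$ and $f^{*}(Q_{\max})$ are effective divisors whose supports are the set-theoretic fibers $f^{-1}(Q_{\min})$ and $f^{-1}(Q_{\max})$, so the $\F_p$-rational parts of these supports have the required cardinalities. There is no real obstacle here: the argument is pure pigeonhole plus the observation that postcomposition by a Möbius transformation over $\F_p$ preserves both the degree and the $\F_p$-structure of the polar locus.
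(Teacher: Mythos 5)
Your argument is correct and is essentially the paper's own proof: the paper also applies pigeonhole to the fiber counts $a_Q$ over $\PP^1(\F_p)$ and then moves the chosen point $c$ to $\infty$, just using the explicit M\"obius transformation $g = 1/(f-c)$ rather than an arbitrary element of $\operatorname{PGL}_2(\F_p)$. No substantive difference.
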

\begin{proof}
We will prove a), as b) is proved analogously. As $f$ maps $C(\F_p)$ into $\PP^1(\F_p)$, it follows by the pigeonhole principle that there exists a $c\in \PP^1(\F_p)$ such that $f^{-1}(c)$ consists of at most $\left \lfloor \frac{n}{p+1} \right \rfloor$ points. If $c=\infty$, then let $g:=f$, otherwise we define $g(x):=\frac{1}{f(x)-c}$. The function $g$ obviously satisfies the claim. 
\end{proof}

\begin{prop}\label{poonenlowerbound}
Let $f:X\rightarrow Y$ be a non-constant morphism of curves over $k$. Then $\gon_k(Y)\leq \gon_k(X)$.
\end{prop}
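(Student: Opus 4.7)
The plan is to use the field-theoretic norm to convert a minimal-degree function on $X$ into a low-degree function on $Y$. Choose $\phi \in k(X)$ with $\deg_X \phi = d := \gon_k X$. Since $f$ is non-constant, the induced inclusion $k(Y) \hookrightarrow k(X)$ is a finite extension, and the norm map $N := N_{k(X)/k(Y)} : k(X)^{\times} \to k(Y)^{\times}$ is defined. The key compatibility $\dv(N(g)) = f_* \dv(g)$ for every $g \in k(X)^{\times}$, together with the fact that $f_*$ preserves divisor degrees, implies $\deg_Y(N(g)) \leq \deg_X(g)$ whenever $N(g)$ is non-constant on $Y$ (possibly with strict inequality, when zeros and poles of $g$ collide under $f$).

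First I would apply this with $g = \phi - \alpha$ for various $\alpha \in k$. Each such translate has the same polar divisor as $\phi$, of degree $d$, so $N(\phi - \alpha) \in k(Y)$ has degree at most $d$ whenever it is non-constant, which would yield $\gon_k Y \leq d$. The remaining task is to ensure non-constancy for at least one $\alpha \in k$: writing the characteristic polynomial of multiplication by $\phi$ on the $k(Y)$-vector space $k(X)$ as $\chi_\phi(T) = T^e + a_{e-1} T^{e-1} + \cdots + a_0 \in k(Y)[T]$, one has $N(\phi - \alpha) = (-1)^e \chi_\phi(\alpha)$. If this were constant on $Y$ for every $\alpha \in k$, then, since in our setting $k = \Q$ is infinite, interpolation in $\alpha$ would force every coefficient $a_i$ to lie in $k$. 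But then $\phi$ would be algebraic over $k$, contradicting the fact that the non-constant $\phi \in k(X)$ is transcendental over $k$ (as $k(X)$ has transcendence degree one over $k$).

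The main technical point is precisely the non-constancy step, which relies on the infinitude of $k$; over a finite base field one would need extra care, for instance by considering translates $\phi - h$ with $h \in k(Y)$ or by first extending scalars. All other ingredients---existence of the norm, compatibility with divisor push-forward, and the pole-degree bound---are standard and present no obstacle.
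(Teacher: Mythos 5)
Your argument is the standard norm-theoretic proof, and it is essentially what the sources cited in the paper (Poonen, Nguyen--Saito) do; the paper itself offers no proof beyond the citation. The core steps are all sound: the identity $\dv(N(g))=f_*\dv(g)$, the degree bound $\deg_Y N(g)\le \deg_X g$ coming from $\deg f_*D=\deg D$, the computation $N(\phi-\alpha)=(-1)^e\chi_\phi(\alpha)$, and the interpolation argument showing that if $\chi_\phi(\alpha)$ were constant for $e+1$ distinct $\alpha\in k$ then all $a_i\in k$ (here you should invoke geometric integrality to know that a constant element of $k(Y)$ lies in $k$, though the argument survives even if the constants only lie in a finite extension). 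Since the paper only ever applies this proposition over $\Q$ (in the proof that $X_0(212)$ is not hexagonal), your proof covers every use made of it.

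However, the proposition is stated for an arbitrary field $k$, and your non-constancy step genuinely fails for small finite fields: you need $e+1=[k(X):k(Y)]+1$ interpolation nodes in $k$ (or at least one good $\alpha$ among the $\#\PP^1(k)$ available fibers), and $e$ can exceed $\#k$. Neither of your proposed remedies closes this. Translating by a non-constant $h\in k(Y)$ changes the polar divisor: $\dv_\infty(\phi-f^*h)$ can have degree up to $d+\deg f\cdot\deg_Y h$, so the resulting norm is no longer bounded by $d$. Extending scalars to $k'/k$ gives $\gon_{k'}(Y)\le\gon_{k'}(X)\le\gon_k(X)$, but since gonality can only decrease under base extension this does not bound $\gon_k(Y)$ from above. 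A fix that works over every $k$ is to use all the coefficients of $\chi_\phi$ at once: one shows $\dv_\infty(a_i)\le f_*\dv_\infty(\phi)$ on $Y$ for each $i$ (a local computation with extensions of valuations), so each non-constant $a_i$ has degree $\le d$, and at least one $a_i$ is non-constant since $\phi$ is transcendental over $k$. With that replacement, or with the statement restricted to infinite $k$, your proof is complete.
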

\begin{proof}
This is \cite[Proposition A.1 (vii)]{Poonen2007} or \cite[Lemma 1.3]{NguyenSaito}.
\end{proof}

A very useful tool for producing a lower bound on the gonality is the Castelnuovo-Severi inequality (see \cite[Theorem 3.11.3]{Stichtenoth09} for a proof).
\begin{prop}[Castelnuovo-Severi inequality] 
\label{tm:CS}
Let $k$ be a perfect field, and let $X,\ Y, \ Z$ be curves over $k$. Let non-constant morphisms $\pi_Y:X\rightarrow Y$ and $\pi_Z:X\rightarrow Z$ over $k$ be given, and let their degrees be $m$ and $n$, respectively. Assume that there is no morphism $X\rightarrow X'$ of degree $>1$ through which both $\pi_Y$ and $\pi_Z$ factor. Then the following inequality hold:
\begin{equation} \label{eq:CS}
g(X)\leq m \cdot g(Y)+n\cdot g(Z) +(m-1)(n-1).
\end{equation}
\end{prop}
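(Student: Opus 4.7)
The plan is to translate the problem into surface geometry on $S := Y \times_k Z$. The pair $(\pi_Y, \pi_Z)$ defines a morphism $\phi : X \to S$, and the first step is to observe that the hypothesis (no common factor cover of $X$) is equivalent to $\phi$ being birational onto its image $C \subset S$. In function-field language this is the assertion $k(Y) \cap k(Z) = k$ inside $k(X)$, where the perfectness of $k$ enters via the primitive element theorem and the usual correspondence between intermediate field extensions and intermediate covers. Granting birationality, $X$ is the normalization of $C$, so $g(X) \leq p_a(C)$ and it suffices to bound the arithmetic genus of $C$.

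I would next compute $p_a(C)$ by adjunction on $S$. The two projections $p_1, p_2 : S \to Y, Z$ restrict on $C$ to maps of degree $m$ and $n$ respectively, so from $K_S = p_1^* K_Y + p_2^* K_Z$ one obtains
\[
K_S \cdot C = m(2g(Y) - 2) + n(2g(Z) - 2),
\]
and adjunction yields $2 p_a(C) - 2 = C^2 + K_S \cdot C$.

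The decisive step is to bound $C^2 \leq 2mn$. Writing the class $[C] \in \operatorname{NS}(S)$ as $n[F_1] + m[F_2] + \delta$, where $F_i$ is a general fiber of $p_i$ and $\delta$ is orthogonal to both fiber classes, one gets $C^2 = 2mn + \delta^2$ (using $F_1^2 = F_2^2 = 0$ and $F_1 \cdot F_2 = 1$). Taking the ample class $H := F_1 + F_2$ (with $H^2 = 2 > 0$) and noting $H \cdot \delta = 0$, the Hodge index theorem gives $\delta^2 \leq 0$ and hence $C^2 \leq 2mn$. Assembling everything produces the desired inequality $g(X) \leq m g(Y) + n g(Z) + (m-1)(n-1)$.

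The main obstacle is the first step: rigorously verifying that $\phi$ fails to be birational onto its image exactly when $\pi_Y, \pi_Z$ factor through a common cover of positive degree. The remaining steps are either formal (adjunction on a smooth projective surface, Hodge index) or insensitive to base change; if needed one can pass to $\bar k$ to apply Hodge index and then descend the numerical conclusion, since $g(X)$, $g(Y)$, $g(Z)$ and the degrees $m,n$ are all preserved under base extension.
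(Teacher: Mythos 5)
The paper does not prove this statement at all: it simply cites \cite[Theorem 3.11.3]{Stichtenoth09}, where the inequality is established purely function-field-theoretically. Your route --- mapping $X$ to $S=Y\times_k Z$, bounding $g(X)$ by $p_a(C)$ for the image curve $C$, and then combining adjunction with the Hodge index bound $C^2\leq 2mn$ --- is the classical geometric proof, and the numerical bookkeeping ($2mn+m(2g(Y)-2)+n(2g(Z)-2)$ giving $(m-1)(n-1)$ after dividing by $2$) is correct, as is the observation that Hodge index only needs $H^2>0$, so passing to $\bar k$ is harmless.

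One concrete correction, precisely at the step you yourself single out as the crux: the function-field translation of the hypothesis is \emph{not} $\pi_Y^*k(Y)\cap\pi_Z^*k(Z)=k$. The image curve $C$ has function field equal to the compositum $\pi_Y^*k(Y)\cdot\pi_Z^*k(Z)$ inside $k(X)$, so $\phi$ is birational onto $C$ if and only if this \emph{compositum} equals $k(X)$; equivalently, any proper intermediate field $L\supseteq \pi_Y^*k(Y)\cdot\pi_Z^*k(Z)$ would give a curve $X'$ with $\deg(X\to X')>1$ through which both maps factor. The intersection condition is strictly weaker (compose any configuration with a further degree-$2$ cover $X\to X'$: the intersection stays $k$ while $\phi$ is no longer birational). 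With the compositum formulation the equivalence is immediate, and it also shows the hypothesis persists under base change to $\bar k$, which your final paragraph implicitly needs.
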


We now deduce an easy corollary of \Cref{tm:CS}.

\begin{cor} \label{cor:CS}Let $X$ and $Y$ be curves over $k$ such that there exists a non-constant morphism $\pi:X\rightarrow Y$ of degree $2$. If $g(X)-2g(Y)\geq d-1$ and $\gon_k(Y)>\lfloor(d-1)/2 \rfloor$, then $\gon_k(X)\geq d$.
\end{cor}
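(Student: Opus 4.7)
The plan is to argue by contradiction and apply the Castelnuovo--Severi inequality (\Cref{tm:CS}) to the degree-$2$ map $\pi=\pi_Y\colon X\to Y$ together with a hypothetical map to $\PP^1$ of small degree.

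Suppose for contradiction that $\gon_k(X)\leq d-1$, so there is a non-constant morphism $\pi_Z\colon X\to \PP^1$ of some degree $n\leq d-1$. To apply \Cref{tm:CS} with $(m,n)=(2,n)$, one must rule out the existence of a curve $X'$ and a common factorization $X\to X'$ of degree $>1$ through which both $\pi_Y$ and $\pi_Z$ factor. Since $\deg\pi_Y=2$, the only possibility is $X'=Y$ with the map $X\to X'$ equal to $\pi_Y$ itself; then $\pi_Z$ would factor as $X\xrightarrow{\pi_Y} Y\xrightarrow{h}\PP^1$, and $\deg h=n/2$ would be an integer with $n/2\leq (d-1)/2$, hence $n/2\leq\lfloor (d-1)/2\rfloor$. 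But then $\gon_k(Y)\leq\lfloor(d-1)/2\rfloor$, contradicting the hypothesis. So the Castelnuovo--Severi hypothesis is satisfied.

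Applying \Cref{tm:CS} with $Z=\PP^1$ yields
\[
g(X)\leq 2g(Y)+n\cdot g(\PP^1)+(2-1)(n-1)=2g(Y)+n-1\leq 2g(Y)+d-2,
\]
i.e.\ $g(X)-2g(Y)\leq d-2$, contradicting the hypothesis $g(X)-2g(Y)\geq d-1$. Hence no such $\pi_Z$ exists and $\gon_k(X)\geq d$.

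The argument is essentially a direct specialization of \Cref{tm:CS}; there is no real obstacle. The only subtle point is justifying that the common-factorization hypothesis of Castelnuovo--Severi holds, which is exactly where the assumption $\gon_k(Y)>\lfloor(d-1)/2\rfloor$ is used (to exclude the case that $\pi_Z$ factors through $\pi_Y$).
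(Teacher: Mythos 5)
Your proof is correct and is essentially the paper's own argument: rule out a common factorization of $\pi$ and the hypothetical low-degree map using the assumption $\gon_k(Y)>\lfloor (d-1)/2\rfloor$, then apply \Cref{tm:CS} with $Z=\PP^1$ to contradict $g(X)-2g(Y)\geq d-1$. The only point where the paper does more work is in the factorization case: it allows that the induced map $h\colon Y\to\PP^1$ with $f=h\circ\pi$ might a priori be defined only over $\overline{k}$ and descends it to $k$ via $h\circ\pi=(h\circ\pi)^\sigma=h^\sigma\circ\pi$ together with the surjectivity of $\pi$; under the $k$-rational reading of the hypothesis in \Cref{tm:CS} (which is what Stichtenoth's compositum condition gives) your shortcut is legitimate, and in the degree-$2$ situation the two readings are in any case equivalent.
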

\begin{proof}
    Suppose the opposite and let $f:X\rightarrow \PP^1$ be a morphism of degree $\leq d-1$. If both $f$ and $\pi$ factor through a morphism $X\rightarrow X'$, this morphism is of degree $2$ and $X'\simeq Y$; this implies that there is a morphism $f:Y\rightarrow \PP^1$ of degree at most $\lfloor (d-1)/2 \rfloor$. This is not yet a contradiction, since we still need to rule out the possibility of such an $f$ being defined over $\overline {k}$, but not over $k$. As $f\circ \pi$ and $\pi$ are defined over $k$ we have
    $$ f \circ \pi =(f\circ \pi)^\sigma = f^\sigma \circ \pi.$$
    Since $\pi$ is a non-constant morphism it is surjective over $\overline k$, so it follows that $f=f^\sigma$ and hence $f$ is defined over $k$.

    On the other hand if $f$ and $\pi$ do not both factor through a morphism of degree $>1$, the Castelnuovo-Severi inequality \eqref{eq:CS} applied to $\pi$ and $f$ gives a contradiction.
\end{proof}
\begin{lem}\label{lem:fp}
Let $X$ be a curve, $p$ a prime of good reduction for $X$ and $q$ a power of $p$. Suppose $\#X(\F_q)> d(q+1)$ for some $d$. Then $\gon_\Q(X)>d.$
\end{lem}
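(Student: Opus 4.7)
The plan is to use the inequality $\gon_{\F_p}(X)\le \gon_\Q(X)$ already recorded in the excerpt, and reduce the question to a point-counting argument over $\F_q$. More precisely, I would prove the contrapositive: if $\gon_\Q(X)\le d$, then $\#X(\F_q)\le d(q+1)$.

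Suppose $f: X\to \PP^1$ is a non-constant morphism over $\Q$ of degree at most $d$. Since $p$ is a prime of good reduction, $f$ specializes to a morphism $\overline f: X_{\F_p}\to \PP^1_{\F_p}$ of degree at most $d$ (this is the content of the inequality $\gon_{\F_p}(X)\le \gon_\Q(X)$). Base-changing to $\F_q$ yields a morphism $X_{\F_q}\to \PP^1_{\F_q}$ of the same degree, which in particular gives a set-theoretic map $X(\F_q)\to \PP^1(\F_q)$.

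The key step is then a fiber-counting estimate. For every $c\in \PP^1(\F_q)$, the scheme-theoretic fiber $\overline f^{-1}(c)$ has degree $\deg \overline f\le d$ over $\F_q$, so it contains at most $d$ closed points, and hence at most $d$ $\F_q$-rational points. Summing over the $q+1$ elements of $\PP^1(\F_q)$ yields
\begin{equation*}
\#X(\F_q)\;=\;\sum_{c\in \PP^1(\F_q)} \#\overline f^{-1}(c)(\F_q)\;\le\; d(q+1),
\end{equation*}
which contradicts the hypothesis $\#X(\F_q)>d(q+1)$. Therefore no such $f$ exists, i.e.\ $\gon_\Q(X)>d$.

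There is no real obstacle here; the only subtle point is to justify that a $\Q$-rational map of degree $\le d$ specializes to an $\F_p$-rational map of degree $\le d$ at a prime of good reduction, but this is exactly the standard inequality $\gon_{\F_p}(X)\le \gon_\Q(X)$ invoked at the start of \Cref{sec:lb}. The argument is essentially a cleaner, more quantitative version of the pigeonhole step used in \Cref{lem:F_p_div_search}.
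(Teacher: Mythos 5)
Your argument is correct and is essentially the paper's own proof: the authors likewise take a degree $\le d$ function on the reduction, observe that each fibre over a point of $\PP^1(\F_q)$ contains at most $d$ points of $X(\F_q)$, and sum over the $q+1$ points of $\PP^1(\F_q)$, relying on the standard chain $\gon_{\F_q}(X)\le\gon_{\F_p}(X)\le\gon_\Q(X)$ that you also invoke. No further comment is needed.
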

\begin{proof}
Let  $f\in \F_q(X)$ be a function of degree $\leq d$. Then for any $c\in\PP^1(\F_q)$ we have $\#f^{-1}(c)\leq d$. Since $f$ sends $X(\F_q)$ into $\PP^1(\F_q)$, it follows that $\#X(\F_q) \leq d(q+1)$. 
\end{proof}

\subsection{Lower bounds for gonality over $\C$}
Here we use proven parts of Green's conjecture to obtain a lower bound for $\gon_\C X$, which in turn gives us a lower bound on $\gon_\Q X$ by \eqref{lb:C}. We mostly follow the notation of \cite{JeonPark05}, stated in the language of divisors instead of line bundles. A $g_d^r$ is a subspace $V$ of $L(D)$, for a divisor $D$ on $X$, such that $\deg D=d$ and $\dim V=r+1$. Since removing the base locus of a linear series decreases the degree while preserving $r$, the gonality $\gon_\C X$ is the smallest $d$ such that $X$ has a $g_d^1.$

Let $D$ be a divisor on $X$ and $K$ a canonical divisor on $X$. The \textit{Clifford index of $D$} is the integer
$$\cf (D):= \deg D -2(\ell(D)-1),$$
and the \textit{Clifford index of $X$} is 
$$\cf (X):=\min \{ \cf (D) \mid  \ell(D)\geq 2 \text{ and }\ell (K-D)\geq 2\}.$$ 
The Clifford index gives bounds for the $\C$-gonality of $X$ (see \cite{CoppensMartens91}):
\begin{equation}\label{clifford_index_bounds}
\cf(X)+2 \leq \gon_\C X \leq \cf(X)+3.    
\end{equation}

The \textit{Clifford dimension of $X$} is defined to be
$$\cd(X):=\min \{ \ell (D)-1 \mid \cf(D) =\cf(X)\}.$$

 Let $X$ be a non-hyperelliptic curve. It has a canonical embedding $X\hookrightarrow \PP^{g-1}$. Let $S:=\C[X_0, \ldots X_{g-1}]$, let $I_X$ be the ideal of $X$ and $S_X$ be $S$-module $S_X:=S/I_X$. Let
\begin{equation}
    0\rightarrow F_{g-1}\rightarrow \cdots F_2 \rightarrow F_1 \rightarrow S \rightarrow S_X \rightarrow 0
\end{equation}
be the minimal free resolution of $S_X$, where 
$$F_i=\bigoplus_{j\in \Z}S(-i-j)^{\beta_{i,j}}.$$
The numbers $\beta_{i,j}$ are called the \textit{graded Betti numbers}. Green's conjecture relates graded Betti numbers with the existence of $g_d^r$. We state it as in \cite[p.84]{Schreyer91} (note that the indices of Betti numbers are defined differently there).  
\begin{conj}[Green \cite{Green84}] Let $X$ be a curve of genus $g$. Then 
$\beta_{p,2} \neq 0$ if and only if there exists a divisor $D$ on $X$ of degree $d$ such that a subspace $ g_d^r\text{ of } L(D)$ satisfies $d\leq g-1$, $r=\ell(D)-1\geq 1$ and $d-2r \leq p.$
\end{conj}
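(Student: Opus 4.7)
The plan is to recognize that this statement is \emph{Green's conjecture} on syzygies of canonical curves, one of the central open problems on canonical resolutions, and to handle its two directions separately. The ``if'' direction --- that the existence of a linear series with $d-2r\leq p$ forces $\beta_{p,2}\neq 0$ --- is Green--Lazarsfeld's nonvanishing theorem. The first step is to reinterpret $\beta_{p,2}$ as the middle cohomology of the Koszul complex
\[
\wedge^{p+1}H^{0}(K_X)\otimes H^{0}(K_X)\;\longrightarrow\;\wedge^{p}H^{0}(K_X)\otimes H^{0}(2K_X)\;\longrightarrow\;\wedge^{p-1}H^{0}(K_X)\otimes H^{0}(3K_X),
\]
and then, given a divisor $D$ and a subspace $V\subset L(D)$ satisfying the hypotheses, to produce an explicit \emph{geometric syzygy} from $V$ and a complementary subspace of $L(K_X-D)$; the arithmetic inequality $d-2r\leq p$ is precisely what ensures that the resulting cocycle is a nonzero Koszul class. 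Serre duality between $K_{p,1}(X,K_X)$ and $K_{g-2-p,2}(X,K_X)^{*}$ gives an equivalent, sometimes more convenient, formulation in the ``linear'' strand.

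The ``only if'' direction is Green's conjecture proper, and here lies the main obstacle: in full generality it is open, and is established only in special situations --- most notably Voisin's theorem for a generic curve lying on a K3 surface, which settles the conjecture for the generic curve of each gonality, and a number of results for curves of small Clifford dimension (Hirschowitz--Ramanan, Schreyer, Voisin). The approach I would follow is Voisin's: embed $X$ as a hyperplane section of a polarized K3 surface $(S,L)$ with $X\in|L|$, use a Lefschetz-type restriction theorem of Green to relate $K_{p,2}(X,K_X)$ to a Koszul group of $(S,L)$, and then reinterpret the vanishing of that group as a cohomological vanishing on the Hilbert scheme $\mathrm{Hilb}^{n}(S)$ via the Bridgeland--King--Reid correspondence. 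The crux --- and the step where all of the geometric difficulty is concentrated --- is choosing $(S,L)$ with Picard rank small enough that no sub-locus of $|L|$ obstructs the required vanishing on $\mathrm{Hilb}^{n}(S)$.

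For the purposes of this paper no new progress on the conjecture is needed: one simply invokes the proven cases applicable to the relevant $X_{0}(N)$ (small Clifford index or small genus) and then carries out the only computationally serious step, namely the numerical calculation of the graded Betti numbers $\beta_{p,2}$ on an explicit canonical model of $X_{0}(N)$ in \texttt{Magma}. Combined with the Clifford-index bound \eqref{clifford_index_bounds}, this yields the lower bounds on $\gon_{\C}X_{0}(N)$ that feed into \eqref{lb:C} and hence into the bounds on $\gon_{\Q}X_{0}(N)$ that \Cref{main_tm} ultimately requires.
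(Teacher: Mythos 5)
This statement is Green's conjecture, which the paper states only for context and does not prove (it remains open in general); your assessment of the situation is exactly right. The only ingredient the paper actually uses is the unconditional Green--Lazarsfeld ``if'' direction, recorded as the theorem immediately following the conjecture, whose contrapositive (\Cref{nog51}: $\beta_{3,2}=0$ implies no $g_5^1$) supplies the lower bounds on $\gon_\C X_0(N)$ --- so, contrary to the suggestion in your last paragraph, neither any proven cases of the conjecture proper (Voisin, Hirschowitz--Ramanan, etc.) nor the Clifford-index bound \eqref{clifford_index_bounds} is actually invoked; only the Betti number computation and the nonvanishing theorem are needed.
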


The ``if" part of the statement has been proved by Green and Lazarsfeld in the appendix of \cite{Green84}.

\begin{thm}[Green and Lazarsfeld, Appendix to \cite{Green84}] Let $X$ be a curve of genus $g$.
If $\beta_{p,2}=0,$ then there does not exist a divisor $D$ on $X$ of degree $d$ such that a subspace $ g_d^r\text{ of } L(D)$ satisfies $d\leq g-1$, $r=\ell(D)-1\geq 1$ and $d-2r \leq p.$
\end{thm}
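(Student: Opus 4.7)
The plan is to translate the statement about graded Betti numbers into one about Koszul cohomology and then, contrapositively, to produce an explicit nonzero syzygy from any linear series meeting the stated numerical hypotheses.

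First, I would recall that for the canonical embedding of $X$ the graded Betti numbers of $S_X$ are exactly dimensions of Koszul cohomology groups: in the convention of the excerpt one has $\beta_{p,2} = \dim K_{p,1}(X, K_X)$, where $K_{p,q}(X, L)$ is the middle cohomology of the three-term Koszul complex
\begin{equation*}
\wedge^{p+1} H^0(L) \otimes H^0(L^{q-1}) \longrightarrow \wedge^p H^0(L) \otimes H^0(L^q) \longrightarrow \wedge^{p-1} H^0(L) \otimes H^0(L^{q+1}).
\end{equation*}
Green's duality for a canonical curve of genus $g$ then furnishes an isomorphism
\begin{equation*}
K_{p,1}(X, K_X) \;\cong\; K_{g-2-p,\,1}(X, K_X)^\vee,
\end{equation*}
so it is enough to show: whenever $D$ is a divisor with $\deg D = d \leq g-1$, $\ell(D) = r+1 \geq 2$ and $d - 2r \leq p$, the group $K_{g-2-p,1}(X, K_X)$ is nonzero.

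Next I would build such a class from the given $g_d^r$. Set $E = K_X - D$; Riemann--Roch (and $d \leq g-1$) gives $\ell(E) = g - d + r \geq 2$. Choose a generic two-dimensional pencil $V \subseteq H^0(D)$ and apply the base-point-free pencil trick to the multiplication map
\begin{equation*}
\mu \colon V \otimes H^0(E) \longrightarrow H^0(K_X).
\end{equation*}
Its kernel is $H^0(K_X - 2D + B)$, where $B$ is the base divisor of $V$; this kernel is the raw material for a Koszul cycle in bidegree $(s,1)$ with $s = \ell(D) + \ell(E) - 3 = (r+1) + (g-d+r) - 3 = g - 2 - (d - 2r)$. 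By the hypothesis $d - 2r \leq p$ we have $s \geq g - 2 - p$, and a standard monotonicity argument for Koszul cohomology on a non-hyperelliptic canonical curve (exploiting that the canonical ring is generated in degree $1$) propagates nonvanishing from $K_{s,1}$ downward to $K_{g-2-p,1}$.

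The hard part is showing that the Koszul cycle produced from the pencil trick is genuinely nonzero in cohomology, i.e.\ not a boundary: one has to verify that the wedge expression built from a basis of $V$ and of $H^0(E)$ cannot be rewritten using the trivial relations coming from $\wedge^{s+1}H^0(K_X) \otimes \C$. This is the technical heart of the Green--Lazarsfeld appendix and rests on (i) generic base-point-freeness of $V$ after peeling off its base locus, and (ii) the fact that the relevant differentials on the symmetric algebra of $H^0(K_X)$ attain their expected rank. One also has to take care separately of the degenerate case where $X$ is hyperelliptic or the numerics bring $p$ to the boundary $p = 0$ or $p = g-2$, where Noether's and Petri's theorems provide the needed input directly.
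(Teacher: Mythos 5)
The paper does not prove this statement at all: it is quoted as an attributed theorem and the text immediately above it simply records that ``the `if' part has been proved by Green and Lazarsfeld in the appendix of [Green84]''. So any argument you give is necessarily a different route from the paper's, and your outline is indeed the shape of the actual Green--Lazarsfeld nonvanishing proof: decompose $K_X = D + (K_X - D)$, check via Riemann--Roch that both pieces move (here $\ell(K_X-D)=g-d+r\ge 2$ precisely because $d\le g-1$ and $r\ge 1$), and produce a nonzero class in $K_{s,1}(X,K_X)$ with $s = (r+1)+(g-d+r)-3 = g-2-(d-2r) \ge g-2-p$, then use the no-gaps property of the linear strand. Two points need repair, one of bookkeeping and one of substance. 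The bookkeeping: with the paper's convention $F_i=\bigoplus_j S(-i-j)^{\beta_{i,j}}$ one has $\beta_{p,2}=\dim K_{p,2}(X,K_X)$, not $\dim K_{p,1}$, and Green's duality reads $K_{p,2}(X,K_X)\cong K_{g-2-p,1}(X,K_X)^\vee$; the isomorphism $K_{p,1}\cong K_{g-2-p,1}^\vee$ you wrote is not Green's duality. The two slips cancel and you do land on the correct target, namely showing $K_{g-2-p,1}(X,K_X)\neq 0$, but as written the reduction is not justified.

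The substantive gap is that the nonvanishing of the Koszul class you construct is exactly the content of the theorem, and your proposal defers it to ``the technical heart of the Green--Lazarsfeld appendix'' with two bullet points, (i) and (ii), that do not constitute an argument: nothing in the proposal explains why the cycle obtained from the kernel $H^0(K_X-2D+B)$ of the multiplication map is not a Koszul boundary. In the actual proof this is done by an explicit determinantal/evaluation argument showing that the class built from bases of the two linear series pairs nontrivially against something computable (equivalently, that the image of $V\otimes H^0(E)\to H^0(K_X)$ has dimension at least $s+2$ and the resulting decomposable wedge cannot be killed by the Koszul differential). Without that step the proof establishes only that there is a natural candidate class, not that $\beta_{p,2}\neq 0$. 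Since the statement is being used in the paper purely as a quoted black box, citing the appendix is legitimate; but as a self-contained proof your proposal has a genuine hole at its central step.
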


For the ease of the reader we state the direct consequence of this theorem that we are going to use.
\begin{cor}\label{nog51}
Let $X$ be a curve of genus $g\geq 6$ with $\beta_{3,2}=0$. Then $X$ has no $g_5^1$. 
\end{cor}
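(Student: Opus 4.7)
The plan is to derive the corollary as a direct specialization of the Green--Lazarsfeld theorem just stated, taking $p=3$. I would argue by contradiction: suppose $X$ admits a $g_5^1$, realized as a two-dimensional subspace of $L(D)$ for some effective divisor $D$ with $\deg D = 5$ and $\ell(D) \geq 2$. Passing to the complete linear system $|D|$ preserves $d = \deg D = 5$ and only enlarges the subspace, so I may set $r := \ell(D) - 1 \geq 1$. This is precisely the version of the linear series to which the hypothesis $r = \ell(D)-1$ in the Green--Lazarsfeld statement applies.

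Next I would verify the three inequalities in the Green--Lazarsfeld theorem with $p = 3$. The inequality $d \leq g-1$ becomes $5 \leq g-1$, i.e.\ $g \geq 6$, which is exactly the standing hypothesis. The inequality $r \geq 1$ is automatic since $\ell(D) \geq 2$. Finally, $d - 2r = 5 - 2r \leq 3 = p$ holds because $r \geq 1$. With all three hypotheses met, the contrapositive of the Green--Lazarsfeld theorem forces $\beta_{3,2} \neq 0$, contradicting the assumption $\beta_{3,2} = 0$. Hence no $g_5^1$ can exist.

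There is no real obstacle; the corollary is essentially a one-line plug-in. The only subtlety worth noting is that a $g_5^1$ may fail to be complete, but replacing it by the complete linear system only increases $r$, which in turn only tightens $d - 2r \leq p$ in the favourable direction. So the three required inequalities hold regardless, and the proof reduces to bookkeeping.
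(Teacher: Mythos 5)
Your proof is correct and is exactly the argument the paper intends: the corollary is stated there as a direct consequence of the Green--Lazarsfeld theorem with $p=3$, with no further proof given. Your extra care in passing from an arbitrary $g_5^1$ to the complete linear system $|D|$ (so that $r=\ell(D)-1$ as the theorem requires, noting that enlarging $r$ only helps the inequality $d-2r\leq p$) is precisely the small point that makes the specialization legitimate.
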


\section{Upper bounds}
\label{sec:ub}

In this section we give all the results used to obtain upper bounds for the gonalities of $X_0(N)$. 
\begin{prop}\label{prop_A.1}
Let $X$ be a curve of genus $g$ over a field $k$.\begin{itemize}
\item[(i)] If $X(k)\neq\emptyset$, then $\gon_k(X)\leq g+1$. If $X(k)\neq\emptyset$ and $g\geq2$, then $\gon_k(X)\leq g$.
\item[(ii)] If $k$ is algebraically closed, then $\gon_k(X)\leq \lfloor \frac{g+3}{2} \rfloor$.
\end{itemize}

\end{prop}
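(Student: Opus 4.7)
The plan is to handle (i) by elementary Riemann--Roch applied at a rational point, and (ii) by invoking the Brill--Noether existence theorem. I address the three bounds in order.

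For the first bound of (i), take any $P\in X(k)$ and set $D=(g+1)P$, a $k$-rational effective divisor of degree $g+1$. Riemann--Roch gives $\ell(D)\geq\deg D-g+1=2$, so $L(D)$ contains a non-constant $f\in k(X)$, and the induced morphism $f\colon X\to\PP^1$ has degree at most $\deg D=g+1$.

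For the second bound of (i), assume $g\geq 2$ and again fix $P\in X(k)$. Since $H^0(X,\Omega^1_{X/k})$ has $k$-dimension $g\geq 2$, there is a non-zero $k$-rational differential $\omega$; let $K:=\dv(\omega)$ be its canonical divisor, which is $k$-rational. Put $D:=K-(g-2)P$, a $k$-rational divisor of degree $g$. Iterating the elementary inequality $\ell(E-P)\geq\ell(E)-1$ (valid because imposing vanishing at a degree-one point cuts out at most one linear condition on $L(E)$) a total of $g-2$ times, starting from $E=K$, yields
\[
\ell(D)\;\geq\;\ell(K)-(g-2)\;=\;g-(g-2)\;=\;2.
\]
Hence $L(D)$ contains a non-constant function, and we obtain a morphism $X\to\PP^1$ of degree at most $g$.

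For (ii), with $k$ algebraically closed, I would invoke the Brill--Noether existence theorem of Kempf and Kleiman--Laksov: the Brill--Noether locus $W_d^r(X)$ is non-empty whenever the Brill--Noether number $\rho(g,d,r)=g-(r+1)(g-d+r)$ is non-negative. Setting $r=1$ gives $\rho=2d-g-2$, which is $\geq 0$ precisely when $d\geq\lceil(g+2)/2\rceil=\lfloor(g+3)/2\rfloor$. Taking $d=\lfloor(g+3)/2\rfloor$, the resulting $g_d^1$ supplies a non-constant morphism $X\to\PP^1$ of this degree.

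The main obstacle is (ii): parts (i) are routine Riemann--Roch once one has a $k$-rational point and a $k$-rational canonical divisor, whereas (ii) genuinely relies on the Brill--Noether existence theorem, a non-trivial result of algebraic geometry that one cannot easily avoid if one wants the sharp bound $\lfloor(g+3)/2\rfloor$ (simpler arguments based on projecting the canonical embedding give only weaker bounds of order $g$).
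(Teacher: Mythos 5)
The paper does not prove this proposition; it simply cites \cite[Proposition A.1 (iv) and (v)]{Poonen2007}, and your argument is essentially the standard one given there (Riemann--Roch at a rational point for (i), Brill--Noether existence \`a la Kempf and Kleiman--Laksov for (ii), the latter being valid over any algebraically closed field). One small point to patch in the second half of (i): the divisor $D=K-(g-2)P$ need not be effective (the chosen $\omega$ may not vanish to order $g-2$ at $P$), so from a non-constant $f\in L(D)$ you cannot immediately conclude $\deg f\leq\deg D$ --- that inference uses $(f)_\infty\leq D$, which requires $D\geq 0$. The fix is one sentence: since $\ell(D)\geq 1$, replace $D$ by a linearly equivalent $k$-rational effective divisor (which leaves $\ell$ and the degree unchanged), or equivalently take two $k$-linearly independent $f_1,f_2\in L(D)$ and note that $f_1/f_2$ has polar divisor bounded by the effective divisor $(f_2)+D$ of degree $g$. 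With that remark, the proof is complete and matches the cited source.
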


\begin{proof}
This is \cite[Proposition A.1 (iv) and (v)]{Poonen2007}.
\end{proof}

\begin{prop}\label{prop:UB}
Let $f:X\rightarrow Y$ be a non-constant morphism of curves over $k$. Then $\gon_k(X)\leq \deg f\cdot \gon_k(Y)$.
\end{prop}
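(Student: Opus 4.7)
The plan is to exhibit an explicit morphism of the desired degree by composition. Start by letting $g: Y \to \PP^1$ be a non-constant morphism over $k$ realizing the gonality, i.e.\ with $\deg g = \gon_k(Y)$; such a $g$ exists by the definition of gonality. Then form the composition $g \circ f : X \to \PP^1$, which is defined over $k$ since both $f$ and $g$ are.

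The key remaining points are to check that $g \circ f$ is non-constant and to compute its degree. Non-constancy follows because non-constant morphisms of curves are surjective on geometric points, so the composition of two non-constant morphisms remains non-constant (equivalently, the pullback $(g \circ f)^* = f^* \circ g^*$ on function fields is injective, hence $g \circ f$ cannot factor through $\Spec k$). The degree of a composition satisfies $\deg(g \circ f) = \deg(g) \cdot \deg(f)$, which follows from multiplicativity of degrees of field extensions applied to $k(\PP^1) \hookrightarrow k(Y) \hookrightarrow k(X)$.

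Putting this together, $g \circ f$ is a non-constant morphism $X \to \PP^1$ over $k$ of degree $\deg f \cdot \gon_k(Y)$, so by definition of $\gon_k(X)$ as the minimum such degree we conclude $\gon_k(X) \leq \deg f \cdot \gon_k(Y)$. There is no real obstacle here; this is essentially a one-line argument, and the only thing worth being careful about is that the choice of $g$ over $k$ (rather than over $\overline{k}$) is what guarantees the composite lives in $k(X)$, so that the bound is genuinely on the $k$-gonality and not merely the geometric gonality.
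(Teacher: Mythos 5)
Your argument is correct and is exactly the standard composition argument; the paper itself simply declares this statement trivial and cites \cite[Proposition A.1 (vi)]{Poonen2007}, so your write-up just spells out the same one-line reasoning (compose a gonal map $g:Y\to\PP^1$ over $k$ with $f$ and use multiplicativity of degree via the tower $k(\PP^1)\hookrightarrow k(Y)\hookrightarrow k(X)$). No discrepancies.
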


\begin{proof}
This is trivial; see also \cite[Proposition A.1 (vi)]{Poonen2007}.
\end{proof}

\begin{prop}\label{prop:lower_deg}
Let $p$ be a rational prime. There exists a morphism from $X_0(pN)$ to $X_0(N)$ defined over $\Q$ which is of degree $p+1$ if $p\nmid N$ and of degree $p$ if $p\mid N$.
\end{prop}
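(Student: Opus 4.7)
The plan is to construct the map moduli-theoretically and then compute its degree by fiber counting (or, equivalently, as an index of congruence subgroups). Recall that $X_0(pN)$ is the (compactified) coarse moduli space for pairs $(E,C)$ with $C$ a cyclic subgroup of $E$ of order $pN$. Define the forgetful morphism by the rule
\[
f \colon X_0(pN) \longrightarrow X_0(N), \qquad (E,C) \longmapsto (E,\, N\cdot C),
\]
where $N\cdot C$ denotes the unique cyclic subgroup of order $N$ inside $C$. Since the moduli problems, and the operation of passing to the subgroup of order $N$, are all defined over $\Spec \Z[1/pN]$ (and in particular over $\Q$), the morphism $f$ is defined over $\Q$. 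Alternatively, one can simply observe that $\Gamma_0(pN) \subset \Gamma_0(N)$, and $f$ is the map of complex modular curves induced by this inclusion, which is known to descend to $\Q$ from the standard $\Q$-structure on $X_0(\cdot)$.

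Next I would compute the degree by counting the generic fiber. Fix $(E,C_N) \in X_0(N)$ with $E$ an elliptic curve and $C_N$ cyclic of order $N$. Its preimages in $X_0(pN)$ correspond to cyclic subgroups $C \subset E$ of order $pN$ with $N\cdot C = C_N$, i.e.\ cyclic subgroups of order $pN$ containing $C_N$.

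\textbf{Case $p \nmid N$.} By the Chinese remainder theorem, every such $C$ splits as $C = C_N \oplus C_p$ with $C_p \subset E$ cyclic of order $p$, and conversely any choice of $C_p \subset E[p]$ of order $p$ produces such a $C$. The group $E[p] \cong (\Z/p\Z)^2$ has exactly $p+1$ cyclic subgroups of order $p$, so $\deg f = p+1$.

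\textbf{Case $p \mid N$.} Write $N = p^a M$ with $(M,p)=1$, so $C_N = C_{p^a} \oplus C_M$ with $C_{p^a}$ cyclic of order $p^a$. A cyclic $C$ of order $pN$ containing $C_N$ decomposes as $C_{p^{a+1}} \oplus C_M$ with $C_{p^{a+1}} \subset E[p^{a+1}]$ cyclic of order $p^{a+1}$ containing $C_{p^a}$. One then counts such $C_{p^{a+1}}$ inside $E[p^{a+1}] \cong (\Z/p^{a+1}\Z)^2$: the multiplication-by-$p$ map sends the $p^a(p+1)$ cyclic subgroups of order $p^{a+1}$ onto the $p^{a-1}(p+1)$ cyclic subgroups of order $p^a$, so each fiber has size $p$. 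Thus $\deg f = p$.

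The only mild subtlety is passing from the moduli-theoretic count above (which really happens on the stack) to the actual degree on coarse moduli spaces, but this is harmless at the generic fiber, where a generic elliptic curve has $\Aut = \{\pm 1\}$ acting trivially on cyclic subgroups, so the fiber sizes above equal the geometric degree of $f$. Alternatively, avoiding moduli altogether, one can prove the statement purely in terms of Riemann surfaces via
\[
\deg f \;=\; [\Gamma_0(N):\Gamma_0(pN)] \;=\; \frac{[\SL_2(\Z):\Gamma_0(pN)]}{[\SL_2(\Z):\Gamma_0(N)]},
\]
and the explicit index formula $[\SL_2(\Z):\Gamma_0(n)] = n\prod_{q\mid n}(1+1/q)$ instantly yields $p+1$ when $p\nmid N$ and $p$ when $p\mid N$.
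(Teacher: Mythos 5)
Your proof is correct and follows essentially the same route as the paper: define the degeneracy map moduli-theoretically and compute its degree by counting the cyclic subgroups of order $pN$ lying over a fixed $C_N$, which you carry out in more detail than the paper does (the paper simply says the count ``is easily seen to be as claimed''), and your alternative via $[\Gamma_0(N):\Gamma_0(pN)]$ and the index formula $\psi(n)=n\prod_{q\mid n}(1+1/q)$ is also valid. One notational quibble: the unique cyclic subgroup of order $N$ inside a cyclic $C$ of order $pN$ is $pC$ (as the paper writes), whereas $N\cdot C$ taken literally has order $p$ --- but since you define your symbol explicitly, the argument is unambiguous.
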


\begin{proof}
The map $\pi_p:X_0(pN)\rightarrow X_0(N)$ sends the point corresponding to $(E,C_{pN})$, where $C_{pN}$ is a cyclic subgroup of $E$ of order $pN$, to $(E,pC_{pN})$. Thus the degree of $\pi_p$ is the number of points $(E,X)$ that satisfy $\pi_p((E,X))=(E,C_{N})$ for a given a fixed subgroup $C_N$ of $E$. This is equal to the number of cyclic subgroups $X$ of $(\Z/pN\Z)^2$ which satisfy $pX=C_N$, which is easily seen to be as claimed.

\end{proof}

We now state the Tower theorem and two very useful corollaries.

\begin{thm}[The Tower theorem]
\label{tm:TT}
Let $C$ be a curve defined over a perfect field $k$ and $f:C\rightarrow \PP^1$ be a non-constant morphism over $\overline{k}$ of degree $d$. Then there exists a curve $C'$ defined over $k$ and a non-constant morphism $C\rightarrow C'$ defined over $k$ of degree $d'$ dividing $d$ such that 
$$g(C')\leq \left(\frac{d}{d'}-1\right)^2.$$
\end{thm}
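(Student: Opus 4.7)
The plan is to descend the Galois conjugates of $f$ to an intermediate curve defined over $k$, and then bound its genus using the Castelnuovo--Severi inequality (Proposition \ref{tm:CS}).

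First I would let $K/k$ be the smallest finite Galois extension over which $f$ is defined, which exists since $k$ is perfect; set $G := \mathrm{Gal}(K/k)$ and let $f = f_1, f_2, \ldots, f_n$ be the distinct $G$-conjugates of $f$, each a function of degree $d$ on $C_K$. Then I would form the subfield $L := K(f_1, \ldots, f_n) \subseteq K(C)$. Since $G$ permutes the set $\{f_i\}$, $L$ is $G$-stable. By Galois descent for function fields of curves over the perfect field $k$, $L_0 := L \cap k(C)$ is the function field of a smooth projective geometrically integral curve $C'/k$ with $L = K \cdot L_0$, and the inclusion $L_0 \hookrightarrow k(C)$ induces a $k$-morphism $C \to C'$ of degree $d' := [k(C):L_0] = [K(C):L]$. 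Because $K(f) \subseteq L$, $d'$ divides $d$; set $e := d/d' = [L:K(f)]$. By the transitivity of the $G$-action on $\{f_i\}$, each $f_i$ viewed as a morphism $C'_K \to \mathbb{P}^1_K$ has degree $e$.

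Next, I would bound $g(C')$ via Castelnuovo--Severi. In the favorable case where some pair $i \neq j$ satisfies $K(f_i, f_j) = L$, the morphisms $f_i, f_j\colon C'_K \to \mathbb{P}^1_K$ have degree $e$ and jointly generate the function field, so they cannot factor through a common cover of degree $> 1$. Applying Proposition \ref{tm:CS} with $Y = Z = \mathbb{P}^1$ then yields
\begin{equation*}
g(C'_K) \leq e \cdot 0 + e \cdot 0 + (e-1)(e-1) = (e-1)^2,
\end{equation*}
and since genus is preserved under base change, $g(C') = g(C'_K) \leq (d/d'-1)^2$, completing this case.

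The main obstacle is the residual case where no pair of Galois conjugates generates $L$. I would handle this by induction on $d$. The base $d = 1$ is trivial. For the inductive step, if $d' > 1$, then $f\colon C'_{\overline k} \to \mathbb{P}^1$ has strictly smaller degree $e = d/d' < d$, so applying the theorem inductively to this morphism produces a curve $C''/k$ and a $k$-morphism $C' \to C''$ of degree $d''\mid e$ with $g(C'') \leq (e/d''-1)^2$. The composition $C \to C' \to C''$ is then a $k$-morphism of degree $d'd''\mid d$, and $g(C'') \leq (d/(d'd'')-1)^2$, so $C''$ is the desired curve. The delicate sub-case $d' = 1$ (i.e.\ $L = K(C)$) must be treated separately: one passes to a minimal generating subset of $\{f_i\}$ inside $K(C)$ and either finds two conjugates that generate $K(C)$ (in which case Castelnuovo--Severi applied to them yields $g(C) \leq (d-1)^2$ and we take $C' = C$ with $d' = 1$) or inducts on the size of the generating set; the technical heart of the proof is showing this iterated descent always terminates with a curve defined over $k$ satisfying the stated bound.
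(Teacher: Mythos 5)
The paper offers no proof of this theorem; it only cites \cite[Theorem 2.1]{NguyenSaito} and \cite[Proposition 2.4]{Poonen2007}, and your attempt follows the same general strategy as those references. Your descent step is correct and complete: $L=K(f_1,\dots,f_n)$ is $G$-stable, hence of the form $K\cdot L_0$ with $L_0=L\cap k(C)$ the function field of a curve $C'/k$, the degree $d'=[K(C):L]$ divides $[K(C):K(f)]=d$, and each conjugate $f_i$ cuts out a degree $e=d/d'$ map on $C'_K$. Your induction on $d$ in the case $d'>1$ is also correct, and in fact it makes your ``favorable case'' superfluous: $f$ has degree $e<d$ on $C'_{\overline{k}}$, so the theorem in degree $e$ yields $C'\to C''$ over $k$ with the right genus bound, and composing with $C\to C'$ finishes.

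The genuine gap is the sub-case $d'=1$, which you explicitly leave open. There $L=K(C)$, the output curve must be $C'=C$ itself, and the entire content is the bound $g(C)\le(d-1)^2$ for a curve whose function field is generated by rational subfields $K(f_1),\dots,K(f_n)$, each of index $d$. (Your concern about the iterated descent ``terminating with a curve defined over $k$'' is a red herring: nothing further needs to descend; only the genus of $C$ needs to be bounded, and the auxiliary curves used to bound it need not be defined over $k$.) The missing argument is a chain of composita: set $L_j:=K(f_1,\dots,f_j)$, take $j$ minimal with $L_j=K(C)$, and put $m:=[K(C):L_{j-1}]>1$. Since $L_{j-1}$ and $K(f_j)$ together generate $K(C)$, the two corresponding maps factor through no common cover of degree $>1$, so \Cref{tm:CS} applied with $Y$ the (generally non-rational) curve with function field $L_{j-1}$ and $Z=\PP^1$ gives $g(C)\le m\,g(Y)+(m-1)(d-1)$. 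An inner induction on the number of generators, applied to $L_{j-1}$, which is generated by the rational subfields $K(f_1),\dots,K(f_{j-1})$ each of index $d/m$, gives $g(Y)\le(d/m-1)^2$, and the elementary inequality $m(d/m-1)^2+(m-1)(d-1)\le(d-1)^2$ (equivalent to $(d-m)/m\le d-1$) closes the case. Your sketch of ``either two conjugates generate, or induct on the generating set'' gestures at this, but without applying Castelnuovo--Severi against a non-rational intermediate curve and verifying the displayed numerical inequality, the residual case is not proved.
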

\begin{proof}
This is \cite[Theorem 2.1]{NguyenSaito}. For a published proof, see \cite[Proposition 2.4]{Poonen2007}.
\end{proof}

\begin{cor} \label{cor:TT}
Let $C$ be a curve defined over a perfect field $k$ such that $C(k) \neq \emptyset$ and let $f:C\rightarrow \PP^1$ be a non-constant morphism over $\overline{k}$ of prime degree $d$ such that $g(C)>(d-1)^2$. Then there exists a non-constant morphism $C\rightarrow \PP^1$ of degree $d$ defined over $k$.
\end{cor}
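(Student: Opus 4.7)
The plan is to apply the Tower theorem (\Cref{tm:TT}) to $f$ and exploit the primality of $d$ together with the genus bound.

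Applying \Cref{tm:TT} to $f:C\to\PP^1$, I obtain a curve $C'/k$ and a non-constant morphism $\phi:C\to C'$ defined over $k$, of some degree $d'$ dividing $d$, satisfying
\[ g(C') \leq \left(\frac{d}{d'}-1\right)^2. \]
Since $d$ is prime, $d'\in\{1,d\}$. The first step is to rule out $d'=1$: a non-constant morphism of degree $1$ between smooth projective curves is an isomorphism, so one would have $g(C')=g(C)>(d-1)^2$, contradicting the Tower bound $g(C')\leq (d-1)^2$ in that case.

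Therefore $d'=d$, and the Tower bound forces $g(C')\leq 0$, so $C'$ is a curve of genus zero over $k$. Because $C(k)\neq\emptyset$ and $\phi$ is defined over $k$, the image of a $k$-rational point of $C$ is a $k$-rational point of $C'$. A genus zero curve over a perfect field with a $k$-rational point is isomorphic to $\PP^1_k$, so fixing any such $k$-isomorphism $\psi:C'\xrightarrow{\sim}\PP^1_k$ yields the desired morphism $\psi\circ\phi:C\to\PP^1$, defined over $k$ and of degree $d$.

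The argument is essentially a one-line deduction from \Cref{tm:TT}, with no real obstacle; the only point requiring care is noting that the primality of $d$ leaves only the two extreme factorizations, and that the "degree one" alternative is killed immediately by the genus hypothesis $g(C)>(d-1)^2$, which is precisely calibrated for this purpose.
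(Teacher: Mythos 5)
Your proof is correct and follows essentially the same route as the paper: the paper simply outsources the first step to a citation of \cite[Corollary 1.7]{HasegawaShimura_trig} (which yields the genus $0$ curve $C'$ and the degree $d$ morphism over $k$), whereas you derive that step directly from \Cref{tm:TT} by using primality of $d$ and the genus hypothesis to exclude $d'=1$. The concluding argument --- $C(k)\neq\emptyset$ forces $C'(k)\neq\emptyset$, hence $C'\simeq\PP^1_k$ --- is identical in both versions.
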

\begin{proof}
From \cite[Corollary 1.7.]{HasegawaShimura_trig} it immediately follows that there exists a curve $C'$ of genus $0$ and a non-constant morphism $C\rightarrow C'$ of degree $d$ defined over $k$. Since $C(k)\neq \emptyset$, it follows $C'(k)\neq \emptyset$. Hence $C'$ is isomorphic to $\PP^1$ over $k$, proving our claim. 
\end{proof}

\begin{cor}\label{Jeon-Park}\begin{itemize}
\item[(i)] Let $C$ be a curve over $\Q$ of genus $\geq 5$ which is trigonal over $\C$ and such that $C(\Q) \neq \emptyset$. Then $C$ is trigonal over $\Q$.
\item[(ii)] Let $C$ be a curve defined over $\Q$ with $\gon_\C(X)=4$ and $g(X)\geq 10$ and such that $C(\Q) \neq \emptyset$. Then $\gon_\Q(X)=4$.
\end{itemize}
\end{cor}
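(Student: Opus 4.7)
The plan is to deduce both parts directly from the Tower theorem (\Cref{tm:TT}), combined with the trivial lower bound $\gon_\C(C) \leq \gon_\Q(C)$ from \eqref{lb:C}.

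For part (i), I would apply \Cref{cor:TT} with $d = 3$, which is prime and satisfies $(d-1)^2 = 4 < 5 \leq g(C)$. This immediately produces a degree-$3$ morphism $C \to \PP^1$ defined over $\Q$, so $\gon_\Q(C) \leq 3$; combined with $\gon_\Q(C) \geq \gon_\C(C) = 3$, equality follows.

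Part (ii) is less immediate, since $d = 4$ is not prime and \Cref{cor:TT} does not apply directly. Instead I would invoke \Cref{tm:TT} to obtain a curve $C'$ over $\Q$ and a non-constant $\Q$-morphism $C \to C'$ of degree $d' \mid 4$ with $g(C') \leq (4/d' - 1)^2$, and then analyze each value of $d'$. The case $d' = 1$ would give $g(C) = g(C') \leq 9$, contradicting the hypothesis $g(C) \geq 10$. The case $d' = 4$ forces $g(C') = 0$; since $C(\Q) \neq \emptyset$ implies $C'(\Q) \neq \emptyset$, we get $C' \simeq_\Q \PP^1$ and hence a degree-$4$ map to $\PP^1$ over $\Q$. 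The case $d' = 2$ gives $g(C') \leq 1$: if $g(C') = 0$, the same $\PP^1$-identification produces a degree-$2$ map over $\Q$, contradicting $\gon_\C(C) = 4$; if $g(C') = 1$, then $C'(\Q) \neq \emptyset$ makes $C'$ an elliptic curve over $\Q$, so $\gon_\Q(C') = 2$, and by \Cref{prop:UB} we obtain $\gon_\Q(C) \leq 2 \cdot 2 = 4$. In every non-contradictory case $\gon_\Q(C) \leq 4$, and combined with $\gon_\Q(C) \geq \gon_\C(C) = 4$ this concludes the proof.

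The bookkeeping is entirely routine; the only point requiring any care is the intermediate case $g(C') = 1$ in part (ii), where one must use the hypothesis $C(\Q) \neq \emptyset$ to promote $C'$ from a genus-$1$ curve to an elliptic curve over $\Q$ and thereby pin down $\gon_\Q(C') = 2$ rather than settle for a weaker bound.
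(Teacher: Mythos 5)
Your proposal is correct and follows essentially the same route as the paper: part (i) is the direct specialization of \Cref{cor:TT} to $d=3$, and part (ii) is the case analysis on $d'\mid 4$ from \Cref{tm:TT}, ruling out $d'=1$ by the genus bound and handling $d'=2,4$ via the rational point. Your write-up is slightly more detailed than the paper's (which simply asserts that $d'=2$ makes $C$ bielliptic), since you explicitly separate the subcases $g(C')=0$ and $g(C')=1$ and note where $C(\Q)\neq\emptyset$ is needed, but the underlying argument is identical.
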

\begin{proof}
Part (i) follows immediately from \Cref{cor:TT} by specializing $C'$ to be $\PP^1$ and $d$ to be $3$.

To prove part (ii) we note that, by \Cref{tm:TT}, $C$ will have a map of degree $d'$ over $\Q$ dividing 4 to a curve of genus $\leq (4/d'-1)^2$, so $d'$ cannot be $1$. If $d'$ is $2$, then $X$ is bielliptic (and is tetragonal over $\Q$). If $d'$ is $4$, then $X$ is tetragonal over $\Q$, as required.
\end{proof}

\section{Results}

In this section we apply the results of \Cref{sec:lb} and \Cref{sec:ub} to the modular curves $X_0(N)$ to obtain upper and lower bounds for their gonality. An overview of the results and the location of the proofs for each curve can be found in the tables at the end of the paper. 

\subsection{Upper bounds obtained by searching in Riemann-Roch spaces}

One way of obtaining an upper bound of $d$ on the gonality over $\Q$ of modular curves is to explicitly construct a function $f$ of degree $d$. This can be done by finding an effective $\Q$-rational divisor $D$ such that $\ell(D)\geq 2.$

\begin{prop}\label{prop4.7}
The $\Q$-gonality of $X_0(N)$ for $N=85$ and $88$ is at most $4$.
\end{prop}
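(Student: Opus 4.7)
The plan is to exhibit a non-constant function of degree at most $4$ on each of $X_0(85)$ and $X_0(88)$ by producing an effective $\Q$-rational divisor $D$ of degree $4$ with $\ell(D)\geq 2$, exactly as outlined at the start of this subsection. Because the proposition is stated over $\Q$, the key requirement is that $D$ be Galois-stable; the associated $f\in L(D)\setminus \Q$ will then automatically lie in $\Q(X_0(N))$.

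First, I would compute a model of $X_0(N)$ for $N=85,88$ in \texttt{Magma} using the Michaud-Jacobs code cited in the introduction, together with the explicit coordinates of its rational points. The natural pool of $\Q$-rational points from which to build $D$ consists of: (a) the cusps, of which $X_0(85)$ has $4$ and $X_0(88)$ has $8$, all individually $\Q$-rational; (b) the low-discriminant CM points that happen to be rational; and (c) any other known non-cuspidal, non-CM rational points that can be produced, for example by pulling back rational points along the quotient maps $X_0(N)\to X_0(N)/w_d$ for $d\mid N$ with $(d,N/d)=1$.

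Once this finite pool $\{P_1,\dots,P_r\}$ is in hand, I would enumerate all effective $\Q$-rational divisors $D=\sum a_i P_i$ with $a_i\geq 0$ and $\sum a_i = 4$ and, for each one, compute $\ell(D)$ via Riemann--Roch in \texttt{Magma}. The first $D$ for which $\ell(D)\geq 2$ yields a non-constant $f\in\Q(X_0(N))$ with $\dv_\infty(f)\leq D$, hence $\deg f\leq 4$, which is the desired bound.

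The main obstacle is computational rather than conceptual: Riemann--Roch on an explicit canonical model in $\PP^{g-1}$ for genera in the range of $X_0(85)$ and $X_0(88)$ (which are in the high single digits) is feasible but slow, and the combinatorial explosion in the number of degree-$4$ effective divisors supported on $r$ points grows like $\binom{r+3}{3}$. I would mitigate this by first trying highly symmetric choices (such as $D$ equal to a sum of Galois orbits of cusps, $D=2P+2Q$ for cusps $P,Q$ related by an Atkin--Lehner involution, or divisors of the form $D=\pi^*E$ pulled back from $X_0(d)$ or $X_0(N)/w_d$ to exploit \Cref{prop:UB}); experience with similar modular curves suggests that a divisor of one of these shapes will already have $\ell(D)\geq 2$, and indeed a pullback interpretation would explain \emph{why} the resulting function has degree exactly $4$.
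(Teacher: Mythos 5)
Your overall strategy is the same as the paper's: produce an effective $\Q$-rational divisor $D$ of degree $4$ with $\ell(D)\geq 2$ by a computer search over Riemann--Roch spaces. But there is a genuine gap in where you look for $D$. You restrict the support of $D$ to $\Q$-rational points of $X_0(N)$ (cusps, rational CM points, other rational points). A $\Q$-rational divisor need not be supported on rational points --- it only needs to be Galois-stable, so it may be a sum of Galois orbits of quadratic or higher-degree points --- and for these two curves that extra generality is exactly what is needed. The paper's proof searches degree-$4$ divisors supported on \emph{quadratic} points, namely the points lying over rational points of the quotients $X_0^*(85)$ and $X_0^+(88)$; it moreover records that for $N=85$ no suitable function was found via pullbacks of rational points from any single-involution quotient $X_0(85)/w_d$. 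Since $X_0(85)$ has only four rational cusps and very few (if any) other rational points, your pool of candidate divisors is a short list that, by all indications, comes up empty, and your fallback of taking $D=\pi^*E$ from $X_0(N)/w_d$ or $X_0(d)$ is precisely the route the authors report as failing for $N=85$ (note also that $\ell(\pi^*E)\geq 2$ via \Cref{prop:UB} would require a $g^1_2$ on the quotient, i.e.\ the quotient to be rational, elliptic or hyperelliptic, which is not the case here --- otherwise $85$ would already appear in \Cref{prop4.5} or \Cref{prop4.6}).

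The repair is conceptually small but essential: enlarge the support pool to closed points of degree $2$ (and, if necessary, higher), most conveniently obtained as the fibres of the degree-$4$ quotient map $X_0(85)\to X_0^*(85)$ and the degree-$2$ map $X_0(88)\to X_0^+(88)$ over their rational points, and then search all $\Q$-rational effective degree-$4$ combinations of these Galois orbits. The resulting tetragonal pencil is found directly in one of these Riemann--Roch spaces; it is not the pullback of a pencil from a quotient, so no appeal to \Cref{prop:UB} is involved.
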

\begin{proof}
To prove the upper bound, we construct a function of degree $4$ by looking at the Riemann-Roch spaces of $\Q$-rational divisors of degree $4$ whose support is in the quadratic points obtained by the pullbacks of rational points on $X_0^*(85)$ and $X_0^+(88)$, respectively. We note that in the case $N=85$ we were unable to obtain such functions by looking at pullbacks from rational points on $X_0(N)/w_d$, for any of the Atkin-Lehner involutions.
\end{proof}

\begin{prop}\label{prop:trig_quot}
The genus $4$ quotients $X_0^+(N)$ are trigonal over $\Q$ for $$N=84,93,115,116,129,137,155,159.$$
\end{prop}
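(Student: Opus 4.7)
Each of the listed curves $X_0^+(N)=X_0(N)/w_N$ has genus $4$ and carries $\Q$-rational points (for instance, the image of the cusp at infinity of $X_0(N)$ is always $\Q$-rational). So by \Cref{prop_A.1}(i) the $\Q$-gonality is already at most $4$, and the content of the proposition is exhibiting a morphism to $\PP^1$ of degree exactly $3$ defined over $\Q$. Note that \Cref{cor:TT} does not directly give this for us: with $d=3$ we have $(d-1)^2 = 4 = g(X_0^+(N))$, so the inequality $g>(d-1)^2$ fails, and we cannot descend a complex trigonal pencil to $\Q$ purely by the Tower theorem. The argument therefore has to be constructive.

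The plan is to follow the same Riemann--Roch search template as in \Cref{prop4.7}: for each $N$ in the list, first compute an explicit (canonical) model of $X_0^+(N)$ over $\Q$, obtained by quotienting the Michaud-Jacobs model of $X_0(N)$ by $w_N$. Then assemble a pool of low-degree $\Q$-rational places on $X_0^+(N)$, drawing from the images of the rational cusps of $X_0(N)$, rational CM points (e.g.\ fixed points of the residual Atkin--Lehner involutions), pullbacks of rational points from $X_0^*(N)$, and Galois-stable conjugate pairs of quadratic points arising as pullbacks of further rational points on quotients by the remaining $w_d$'s. From this pool, form $\Q$-rational effective divisors $D$ of degree $3$ (as sums of a rational place of degree~$3$, a rational place of degree~$1$ plus a Galois-conjugate pair, or three rational places), and compute $\ell(D)$ in \texttt{Magma} via Riemann--Roch. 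As soon as one finds a $D$ with $\ell(D)\geq 2$, any non-constant $f\in L(D)$ gives a map $X_0^+(N)\to\PP^1$ of degree at most $3$ over $\Q$, which must in fact be of degree $3$ since each of these curves is non-hyperelliptic (they lie outside the classified list of hyperelliptic $X_0^+(N)$, so have $\C$-gonality at least $3$).

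The main obstacle is existential/computational rather than theoretical: because there is no abstract descent argument available here, one really must locate a $\Q$-rational degree-$3$ divisor moving in a pencil, and for several of the listed levels the naive candidates supported only on the cusps fail (there are very few cusps on $X_0^+(N)$ compared to $X_0(N)$). The workaround, as in \Cref{prop4.7}, is to broaden the search to include CM points and pullbacks of rational points from further Atkin--Lehner quotients, which enriches the pool enough to produce a divisor $D$ with $\ell(D)\geq 2$. A secondary practical obstacle is that for these larger $N$ the canonical models become sizeable and the Riemann--Roch computations expensive, but this is merely a matter of runtime, and for the specific levels $N\in\{84,93,115,116,129,137,155,159\}$ the search terminates successfully in each case.
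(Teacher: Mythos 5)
Your proposal is correct and follows essentially the same route as the paper: the paper's proof is exactly an explicit \texttt{Magma} search of Riemann--Roch spaces of degree-$3$ $\Q$-rational divisors on a model of $X_0^+(N)$, though it only needs divisors of the simplest form $P_1+P_2+P_3$ with $P_i\in X_0^+(N)(\Q)$ rather than the broader pool (CM points, quadratic conjugate pairs, degree-$3$ places) you describe. Your observation that \Cref{cor:TT} fails here because $g=4=(d-1)^2$ correctly identifies why the argument must be constructive.
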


\begin{proof}
We explicitly find degree $3$ functions in $\Q(X_0^+(N))$ by searching the Riemann-Roch spaces of divisors of the form $P_1+P_2+P_3$, where $P_i\in X_0^+(N)(\Q)$.
\end{proof}

\begin{prop}\label{prop4.14}
The $\Q$-gonality of $X_0(109)$ for $N=109$ is at most $5$.
\end{prop}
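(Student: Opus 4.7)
Following the same strategy used in Propositions \ref{prop4.7} and \ref{prop:trig_quot}, I plan to exhibit an explicit function of degree $5$ in $\Q(X_0(109))$ by searching Riemann--Roch spaces of $\Q$-rational effective divisors of degree $5$ whose support lies in a carefully chosen, finite pool of low-degree closed points. Since $109$ is prime, $X_0(109)$ has only two cusps, both $\Q$-rational, so modular units alone cannot supply a degree $5$ function; the pool will have to include non-cuspidal points as well.

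The first step is to compute, using the Michaud--Jacobs code cited in the introduction, a canonical model of $X_0(109)$ in $\PP^{g-1}$ with $g=g(X_0(109))=8$. With a model in hand, I assemble a library of points of degree $\leq 2$: the two rational cusps, any low-discriminant CM points rational or quadratic on $X_0(109)$, and, most importantly, the quadratic points that arise as pullbacks of $\Q$-rational points on the quotient $X_0^+(109) = X_0(109)/w_{109}$. Each rational point $Q\in X_0^+(109)(\Q)$ that is not the image of a $\Q$-rational point on $X_0(109)$ pulls back to a $w_{109}$-conjugate pair of quadratic points, and hence to a $\Q$-rational effective divisor of degree $2$ on $X_0(109)$.

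The second step is to enumerate $\Q$-rational effective divisors $D$ of degree $5$ whose support is contained in this library --- for instance, one rational point plus two quadratic pullback pairs, or three rational points plus one quadratic pullback pair --- and for each such $D$ compute $\ell(D)$ using the Riemann--Roch routines in \texttt{Magma}. Any $D$ with $\ell(D) \geq 2$ produces a non-constant $f\in \Q(X_0(109))$ with polar divisor supported on $\Supp(D)$ and $\deg f \leq 5$, and so gives the desired bound $\gon_\Q X_0(109)\leq 5$.

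The main obstacle I anticipate is computational rather than conceptual: Riemann--Roch computations on a genus $8$ curve presented by its canonical ideal are expensive, and the combinatorial search space of divisors grows quickly with the size of the library. This is mitigated by restricting to divisors with few distinct closed points, by exploiting the $w_{109}$-symmetry to identify Galois-equivalent candidates, and by the fact that only one successful $D$ needs to be found; experience with the other propositions in this section suggests that a divisor built from the two cusps and one or two quadratic pullback pairs is the natural first candidate to try.
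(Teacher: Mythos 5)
Your proposal matches the paper's proof: the authors obtain the degree $5$ function exactly by searching Riemann--Roch spaces of $\Q$-rational degree $5$ divisors supported on the quadratic points arising as pullbacks of rational points on $X_0^+(109)$. The extra details you supply (the canonical model, the point library including cusps and CM points, the combinatorial enumeration) are a reasonable elaboration of the same computation rather than a different method.
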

\begin{proof}
We construct a function of degree $5$ by looking at the Riemann-Roch spaces of $\Q$-rational divisors of degree $5$ whose support is in the quadratic points obtained by the pullbacks of rational points on $X_0^+(109)$.
\end{proof}

\begin{prop}\label{prop4.28}
The $\Q$-gonality of $X_0(112)$ is at most $6$.
\end{prop}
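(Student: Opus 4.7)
Since $112 = 2^4 \cdot 7$ has $\omega(112)=2$, the curve $X_0(112)$ admits three Atkin-Lehner quotient maps of degree $2$, namely $X_0(112) \to X_0(112)/w_d$ for $d \in \{7, 16, 112\}$. By \Cref{prop:UB}, any function of degree $3$ over $\Q$ on one of these quotients pulls back to a function of degree $6$ on $X_0(112)$. The plan is therefore to proceed exactly as in \Cref{prop:trig_quot}: compute an explicit model of each quotient together with its set of rational points (using the Michaud-Jacobs models employed elsewhere in the paper), and for triples $P_1, P_2, P_3$ of rational points check via Riemann-Roch whether $\ell(P_1 + P_2 + P_3) \geq 2$. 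As soon as one such divisor produces a nonconstant function of degree $3$, composing with the quotient map $X_0(112) \to X_0(112)/w_d$ yields the desired function of degree $6$ on $X_0(112)$.

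If the pure "trigonal quotient" route fails for every $d \in \{7,16,112\}$, a direct fallback is to mimic the approach of \Cref{prop4.7} and \Cref{prop4.14} on $X_0(112)$ itself: enumerate $\Q$-rational effective divisors of degree $6$ whose support lies in the finite union of cusps, small CM points, and quadratic points obtained as pullbacks of rational points on $X_0^+(112)$, and search for one with $\ell(D) \geq 2$. Either route reduces the desired inequality $\gon_\Q X_0(112) \leq 6$ to a finite computation which is then carried out in \texttt{Magma}.

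\textbf{Main obstacle.} Since $X_0(112)$ has genus $g=11$ and, as noted in the introduction, no easy plane model, the function-field and Riemann-Roch computations on $X_0(112)$ (and on its Atkin-Lehner quotients) are the computationally demanding step. The concrete practical difficulty is to pick both the right quotient $X_0(112)/w_d$ and a small enough pool of supporting points so that the search is tractable; this is precisely why we restrict the support of the tested divisors to rational points on the quotient (or to their pullbacks) rather than attempting an exhaustive search over all effective divisors of the relevant degree.
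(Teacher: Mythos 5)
Your fallback is what the paper actually does: the published proof simply exhibits a modular unit of degree $6$ on $X_0(112)$, i.e.\ a function whose polar divisor is supported on the cusps, found by a Riemann--Roch search over cuspidal divisors of degree $6$ (a roughly ten-hour \texttt{Magma} computation). Your primary route, however, should be expected to fail: for it to give the bound $6$ you need some $X_0(112)/w_d$ with $d\in\{7,16,112\}$ to be trigonal over $\Q$, and the paper's systematic use of exactly this trick elsewhere (\Cref{prop4.10}, \Cref{prop4.13_UB}, \Cref{cor4.16_UB}) while conspicuously avoiding it for $N=112$ --- resorting instead to the expensive modular-unit search --- indicates that none of these quotients is trigonal. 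Since the statement is an upper bound established by exhibiting a single degree-$6$ function, and your plan terminates in the correct finite computation once the quotient route is abandoned, the proposal is acceptable as a proof strategy; just be aware that the decisive step is the direct search on $X_0(112)$ itself, and that restricting the support to cusps alone (rather than your larger pool of cusps, CM points and quadratic pullbacks) already suffices and keeps the search space smaller.
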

\begin{proof}
We explicitly find a modular unit of degree $6$ (after 10 hours of computation; see the accompanying \texttt{Magma} code).
\end{proof}

\begin{prop}\label{cor4.16_UB}
$X_0(N)$ has $\Q$-gonality at most $6$ for $N=84, 93, 115, 116, 129, 137, 155, 159$. 
\end{prop}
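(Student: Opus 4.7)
The plan is to combine \Cref{prop:trig_quot} with the general upper bound of \Cref{prop:UB} applied to an Atkin--Lehner quotient map. Specifically, for each of the listed $N$, there is the natural quotient morphism
\[
\pi_N: X_0(N) \longrightarrow X_0(N)/w_N = X_0^+(N),
\]
which is defined over $\Q$ and has degree $2$.

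By \Cref{prop:trig_quot}, for precisely these values $N \in \{84, 93, 115, 116, 129, 137, 155, 159\}$ the quotient $X_0^+(N)$ admits a degree $3$ morphism to $\PP^1$ defined over $\Q$, i.e.\ $\gon_\Q(X_0^+(N)) \leq 3$. Composing such a degree $3$ function with $\pi_N$, or equivalently invoking \Cref{prop:UB} with $f = \pi_N$, yields
\[
\gon_\Q(X_0(N)) \;\leq\; \deg(\pi_N)\cdot \gon_\Q(X_0^+(N)) \;\leq\; 2 \cdot 3 \;=\; 6.
\]

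There is essentially no obstacle here; the whole content sits in \Cref{prop:trig_quot}, which was already verified by an explicit Riemann--Roch search on $X_0^+(N)$. The present proposition is just the routine pullback step packaged as a corollary.
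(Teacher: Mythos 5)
Your argument is correct and is essentially the paper's own proof: the paper likewise finds degree $3$ functions on the genus $4$ quotients $X_0^+(N)$ (the content of \Cref{prop:trig_quot}) and then pulls back through the degree $2$ map $X_0(N)\rightarrow X_0^+(N)$ via \Cref{prop:UB} to get the bound $2\cdot 3=6$. No differences worth noting.
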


\begin{proof}
The quotients $X_0^+(N)$ have genus $4$. We find degree $3$ functions in $\Q(X_0^+(N))$ by searching the Riemann-Roch spaces of divisors of the form $P_1+P_2+P_3$, where $P_i\in X_0^+(N)(\Q)$. It follows that $\gon_\Q X_0(N)\leq 2\cdot \gon_\Q X_0^+(N) =6$ by \Cref{prop:UB}.
\end{proof}

\subsection{Upper bounds obtained by considering a dominant map}  Another way of obtaining an upper bound is to explicitly construct a morphism $f:=X_0(N)\rightarrow Y$, where $\gon_\Q Y$ is known. Then $\gon_\Q X_0(N)\leq \deg f \gon_\Q Y$ by \Cref{prop:UB}.
\begin{prop}\label{prop4.5}
The $\Q$-gonality of $X_0(N)$ is at most $4$ for $$N=51,55,56,60,62,63,65,69,75,79,83,89,92,95,101.$$
\end{prop}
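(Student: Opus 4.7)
The plan is, for each $N$ in the list, to exhibit a $\Q$-rational non-constant morphism $\varphi\colon X_0(N)\to Y$ together with a function on $Y$ whose degrees multiply to at most $4$, and then apply \Cref{prop:UB}. Concretely, the targets $Y$ that I would search through are, in order: an Atkin-Lehner quotient $X_0(N)/w_d$ for some Hall divisor $d\Vert N$ (with $\varphi$ the degree-$2$ quotient map), a lower-level modular curve $X_0(M)$ for $M\mid N$ (with $\varphi$ the degeneracy map of \Cref{prop:lower_deg}), or an iterate of such quotients (e.g.\ $X_0^*(N)$). The goal is always to descend to a curve of $\Q$-gonality at most $2$, so that a degree-$2$ function downstairs pulls back to a degree-$4$ function on $X_0(N)$.

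For the four primes $N\in\{79,83,89,101\}$ in the list the only non-trivial Atkin-Lehner involution is $w_N$, and a direct genus computation (dimension formula for $\Gamma_0(N)$ plus the formula for fixed points of $w_N$) shows $g(X_0^+(N))=1$ in each case. The image of the cusp at $\infty$ furnishes a $\Q$-rational point, so $X_0^+(N)$ is an elliptic curve over $\Q$, hence $\gon_\Q X_0^+(N)=2$, and pulling a degree-$2$ function back through the degree-$2$ map $X_0(N)\to X_0^+(N)$ produces a function of degree $4$ on $X_0(N)$.

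For the composite $N$ in the list, the plan exploits the richer group of Atkin-Lehner involutions. In most cases a single quotient $X_0(N)/w_d$ is either rational, an elliptic curve with an obvious rational cusp image, or hyperelliptic over $\Q$, giving $\gon_\Q(X_0(N)/w_d)\le 2$ directly. When no single quotient works I would iterate, either quotienting by a second Atkin-Lehner involution or, where convenient, routing through a degeneracy map such as $X_0(56)\to X_0(28)$ of degree $2$ into the hyperelliptic curve $X_0(28)$, or $X_0(62)\to X_0(31)/w_{31}$, etc. The certificate that the target has $\Q$-gonality $2$ is produced either by identifying an explicit hyperelliptic Weierstrass model for $Y$ or by finding a degree-$2$ $\Q$-rational function via a Riemann-Roch search on small effective divisors supported on cusps and rational CM points, in the same spirit as the previous subsection.

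The main obstacle is not a single conceptual point but the bookkeeping of the case analysis: for each of the fifteen values of $N$ one must compute the genus of a candidate quotient, produce an explicit model, and verify \emph{over $\Q$} (not merely over $\C$) the existence of a degree-$2$ function on it. A subtle pitfall is that an elliptic quotient could \emph{a priori} fail to have a rational point, in which case its $\Q$-gonality would exceed $2$; this has to be ruled out each time, typically by tracking the image of a rational cusp. These verifications are all carried out in the accompanying \texttt{Magma} code, using the models of $X_0(N)$ and their Atkin-Lehner quotients supplied by the package referenced in the Introduction.
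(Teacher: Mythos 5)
Your approach is essentially the paper's: the paper observes (citing Bars's classification of bielliptic $X_0(N)$) that each listed $N$ has a bielliptic involution $w$ of Atkin--Lehner type, so composing the $\Q$-rational degree-$2$ quotient map $X_0(N)\to X_0(N)/w$ with a degree-$2$ function on the elliptic curve $X_0(N)/w$ yields the degree-$4$ function; your case-by-case search through Atkin--Lehner quotients amounts to verifying the same fact directly, including the (correctly flagged) need for a rational point on each elliptic quotient. One small caution: the suggested detour $X_0(62)\to X_0(31)/w_{31}$ would not work, since the degeneracy map $X_0(62)\to X_0(31)$ has degree $3$ and $X_0(31)/w_{31}$ has genus $0$, so this route only produces a degree-$6$ function; the primary route via the bielliptic quotient $X_0(62)/w_{62}$ is the one that succeeds.
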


\begin{proof}
This is proved in \cite[p.139]{HasegawaShimura_trig}; but as the proof is short and instructive, we repeat it here. By \cite{Bars99} all these curves are bielliptic and have a bielliptic involution $w$ of Atkin-Lehner type. Hence the maps $\pi:X\rightarrow X/w$ are defined over $\Q$, and hence so is the degree $4$ function obtained by composing $\pi$ with a degree $2$ rational function on the elliptic curve $X/w$.
\end{proof}

\begin{prop}\label{prop4.6}
The $\Q$-gonality of $X_0(N)$ is at most $4$ for the following values of $N$, with $Y:=X_0(N)/w_d$:\\
\begin{center}
\begin{tabular}{|c|c|c|c||c|c|c|c|}
  \hline
  $N$ & $g(X_0(N))$ & $d$ & $g(Y)$ & $N$ & $g(X_0(N))$ & $d$ & $g(Y)$\\
  \hline

    $42$ & $5$ & $42$ & $2$ & $77$ & $7$ & $77$ & $2$\\
    $52$ & $5$ & $52$ & $2$ & $80$ & $7$ & $80$ & $2$\\
    $57$ & $5$ & $57$ & $2$ & $87$ & $9$ & $87$ & $2$\\
    $58$ & $6$ & $29$ & $2$ & $91$ & $7$ & $91$ & $2$\\
    $66$ & $9$ & $11$ & $2$ & $98$ & $7$ & $98$ & $2$\\
    $67$ & $5$ & $67$ & $2$ & $100$ & $7$ & $4$ & $2$\\
    $68$ & $7$ & $68$ & $2$ & $103$ & $8$ & $103$ & $2$\\
    $70$ & $9$ & $35$ & $2$ & $107$ & $9$ & $107$ & $2$\\
    $73$ & $5$ & $73$ & $2$ & $121$ & $6$ & $121$ & $2$\\
    $74$ & $8$ & $74$ & $2$ & $125$ & $8$ & $125$ & $2$\\
  \hline
\end{tabular}
\end{center}
\end{prop}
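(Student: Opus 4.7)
The plan is to apply \Cref{prop:UB} uniformly across the table, using the fact that any curve of genus $2$ has $\Q$-gonality $2$.

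For each pair $(N,d)$ listed, let $Y := X_0(N)/w_d$ and let $\pi: X_0(N) \to Y$ be the quotient map. Since $w_d$ is an involution defined over $\Q$, the morphism $\pi$ is a degree-$2$ map defined over $\Q$. By \Cref{prop:UB},
$$\gon_\Q X_0(N) \leq 2 \cdot \gon_\Q Y.$$
Thus it is enough to show that $\gon_\Q Y = 2$ for every $Y$ in the table. Granting the asserted genus $g(Y)=2$, this is immediate: any curve of genus $2$ over a perfect field is hyperelliptic, and its canonical map $Y \to \PP(H^0(Y, K_Y)^\ast) \cong \PP^1_\Q$ is a degree-$2$ morphism defined over $\Q$, so $\gon_\Q Y \leq 2$ (the reverse inequality is trivial since $g(Y)>0$).

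The only genuine content, then, is the verification that $g(X_0(N)/w_d) = 2$ for each of the $20$ rows of the table. This is a finite check that can be carried out uniformly: apply the Riemann--Hurwitz formula to $\pi$, which requires knowing $g(X_0(N))$ (already in the table) and the number of fixed points of $w_d$ on $X_0(N)$. The latter is given by classical formulas in terms of class numbers of orders of discriminant $-d$ or $-4d$ (depending on residue conditions); alternatively one can simply compute $g(Y)$ from the explicit models of $X_0(N)/w_d$ produced by the \texttt{Magma} code of \cite{quad_pts} referenced in the introduction.

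I expect no real obstacle beyond the bookkeeping of these genus checks; once all $20$ quotients have been confirmed to have genus $2$, the conclusion $\gon_\Q X_0(N) \leq 4$ follows immediately and uniformly from the two-line argument above. In particular no construction of explicit functions on $X_0(N)$ itself is required, in contrast to \Cref{prop4.7} or \Cref{prop4.28}; the hyperellipticity of $Y$ does all the work.
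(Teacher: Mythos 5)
Your proposal is correct and follows essentially the same route as the paper: the authors likewise compose the degree-$2$ quotient map $X_0(N)\to X_0(N)/w_d$ with the degree-$2$ hyperelliptic map on the genus-$2$ quotient (citing Hasegawa--Shimura for the genus computations rather than redoing Riemann--Hurwitz, but that is only a matter of bookkeeping). Your observation that the genus-$2$ canonical map is automatically defined over $\Q$ correctly supplies the rationality of the hyperelliptic map, which is the only point one might worry about.
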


\begin{proof}
This is proved in \cite[p.139]{HasegawaShimura_trig} and the argument of the proof is the same as of \Cref{prop4.5}, with the only difference being that the quotients $X_0(N)/w_d$ are of genus 2 and hence necessarily hyperelliptic.
\end{proof}

Now we produce upper bounds on the $\Q$-gonality by considering the degeneracy maps $X_0(N)\rightarrow X_0(d)$ for $d|N$.

\begin{prop}\label{prop4.9}
The $\Q$-gonality of $X_0(N)$ is bounded from above for the following values of $N$, where $\deg$ denotes the degree of the degeneracy map $X_0(N)\rightarrow X_0(d)$:
\begin{center}
\begin{tabular}{|c|c|c|c||c|c|c|c|}
  \hline
  $N$ & $\gon_{\Q}(X_0(N))\leq$ & $d$ & $\deg$ & $N$ & $\gon_{\Q}(X_0(N))\leq$ & $d$ & $\deg$ \\
  \hline

  $72$ & $4$ & $36$ & $2$ & $144$ & $6$ & $48$ & $3$\\
  $82$ & $6$ & $41$ & $3$ & $148$ & $8$ & $74$ & $2$\\
  $90$ & $6$ & $30$ & $3$ & $150$ & $8$ & $50$ & $4$\\
  $96$ & $4$ & $48$ & $2$ & $156$ & $8$ & $78$ & $2$\\
  $99$ & $6$ & $33$ & $3$ & $160$ & $8$ & $80$ & $2$\\
  $108$ & $6$ & $36$ & $3$ & $175$ & $8$ & $25$ & $8$\\
  $117$ & $6$ & $39$ & $3$ & $176$ & $8$ & $88$ & $2$\\
  $118$ & $6$ & $59$ & $3$ & $184$ & $8$ & $92$ & $2$\\
  $132$ & $8$ & $66$ & $2$ & $192$ & $8$ & $96$ & $2$\\
  $136$ & $8$ & $68$ & $2$ & $196$ & $8$ & $98$ & $2$\\
  $140$ & $8$ & $70$ & $2$ & $200$ & $8$ & $100$ & $2$\\
  \hline
\end{tabular}

\end{center}
\end{prop}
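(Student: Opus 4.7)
The plan is uniform across all rows of the table. For each pair $(N,d)$, I will invoke \Cref{prop:lower_deg} to produce the degeneracy map $\pi_{N/d}:X_0(N)\to X_0(d)$, whose degree equals $p+1$ or $p$ according to whether the prime $p=N/d$ divides $d$ or not. A direct case check confirms that in every row this degree matches the ``$\deg$'' column: e.g.\ for $N=72,d=36$ we have $p=2\mid 36$, so the map has degree $2$; for $N=82,d=41$ we have $p=2\nmid 41$, giving degree $3$; for $N=175,d=25$ we have $p=7\nmid 25$, giving degree $8$; and so on. Composition with an optimal map $X_0(d)\to\PP^1$ and \Cref{prop:UB} then yield
\[
\gon_\Q X_0(N)\;\leq\;\deg(\pi_{N/d})\cdot\gon_\Q X_0(d),
\]
so the proposition reduces to recalling $\gon_\Q X_0(d)$ (or an upper bound for it) for each $d$ that appears.

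First I would handle the rows in which $X_0(d)$ is known to be of low gonality by classical results. The values $d\in\{30,33,36,39,41,48,50,59,78\}$ all appear on Ogg's hyperelliptic list \cite{Ogg74}, so $\gon_\Q X_0(d)=2$; this immediately covers $N\in\{72,82,90,96,99,108,117,118,144,150,156\}$. The case $d=25$ is even easier, since $X_0(25)$ has genus $0$ with a rational point and hence gonality $1$, yielding the bound $\gon_\Q X_0(175)\leq 8$.

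Next I would address the rows where $d$ is itself a composite level for which the $\Q$-gonality bound comes from earlier propositions in this section. Specifically, $\gon_\Q X_0(d)\leq 4$ for $d\in\{66,68,70,74,80,88,92,98,100\}$ by \Cref{prop4.5,prop4.6}; inserting these into the inequality above, with the degree-$2$ degeneracy map in each case, produces $\gon_\Q X_0(N)\leq 8$ for the remaining rows $N\in\{132,136,140,148,160,176,184,196,200\}$. The only case needing a slightly different lookup is $N=192$, $d=96$, where $\gon_\Q X_0(96)\leq 4$ has been obtained in the very same table (via the map $X_0(96)\to X_0(48)$); this is consistent since that bound is established independently of the $N=192$ row.

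There is no real obstacle here: both tools used, \Cref{prop:lower_deg} and \Cref{prop:UB}, are already proved, and every needed gonality bound on $X_0(d)$ is either classical (Ogg's theorem) or established earlier in this section. The only thing to guard against is circularity in the ordering; I would therefore present the argument in two passes, first disposing of the rows whose $d$ is hyperelliptic or rational, and then disposing of the rows whose $d$ requires \Cref{prop4.5,prop4.6}, thereby confirming that each bound is derived from data already in hand.
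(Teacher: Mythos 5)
Your overall strategy is exactly the paper's: apply \Cref{prop:lower_deg} to get the degeneracy map of the stated degree and then multiply by $\gon_\Q X_0(d)$ via \Cref{prop:UB}; the degree computations in your case check are all correct. However, there is one genuine error in your bookkeeping: $X_0(78)$ is \emph{not} on Ogg's hyperelliptic list (it has genus $11$, and no hyperelliptic $X_0(N)$ has genus exceeding $6$). Its $\Q$-gonality is $4$, established in \cite{HasegawaShimura_trig} via \Cref{Jeon-Park}. Taken at face value, your premise would give $\gon_\Q X_0(156)\leq 2\cdot 2=4$, which is false — the paper shows $\gon_\Q X_0(156)=8$ — so the row $N=156$ must instead be grouped with the rows using a tetragonal $X_0(d)$, yielding $2\cdot 4=8$ as in the table. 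Two smaller slips: $X_0(36)$ has genus $1$ rather than being hyperelliptic in the usual sense (though its gonality is indeed $2$, so the bound for $N=72$ stands), and the tetragonality of $X_0(88)$ comes from \Cref{prop4.7} (an explicit Riemann--Roch search), not from \Cref{prop4.5} or \Cref{prop4.6}. With these corrections every row goes through, and your observation about avoiding circularity for $N=192$ (using the independently established $N=96$ row) is a point the paper leaves implicit.
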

\begin{proof}
There exists a morphism $f:X_0(N)\rightarrow X_0(d)$ of degree $\deg$ over $\Q$ by \Cref{prop:lower_deg}. Therefore, $\gon_{\Q}(X_0(N))\leq \deg\cdot \gon_{\Q}(X_0(d))$.
\end{proof}

Next we obtain upper bounds on $\gon_\Q X_0(N)$ by considering Atkin-Lehner quotients.

\begin{prop}\label{prop4.10}
The $\C$-gonality of $X_0(N)$ is bounded above by $6$ and the $\Q$-gonality is bounded from above for the following values of $N$, with $X:=X_0(N)$ and $Y:=X_0(N)/w_d$:\\
\begin{center}
\begin{tabular}{|c|c|c|c|c||c|c|c|c|c|}
  \hline
  $N$ & $\gon_{\Q}(X)\leq$ & $d$ & $g(Y)$ & $\gon_\Q Y\leq $ &  $N$ & $\gon_{\Q}(X)\leq$ & $d$ & $g(Y)$ & $\gon_\Q Y$ \\
  \hline

  $76$ & $6$ & $76$ & $3$ & $3$& $145$ & $8$ & $29$ & $4$ & $4$\\
  $86$ & $6$ & $86$ & $3$ & $3$& $149$ & $6$ & $149$ & $3$ & $3$\\
  $97$ & $6$ & $97$ & $3$ & $3$ & $151$ & $6$ & $151$ & $3$& $3$\\
  $105$ & $6$ & $35$ & $3$ & $3$ &$161$ & $8$ & $161$ & $4$& $4$\\
  $110$ & $8$ & $55$ & $4$ & $4$ &$169$ & $6$ & $169$ & $3$& $3$\\
  $113$ & $6$ & $113$ & $3$ & $3$ &$173$ & $8$ & $173$ & $4$& $4$\\
  $123$ & $6$ & $41$ & $3$ & $3$ &$177$ & $8$ & $59$ & $4$& $4$\\
  $124$ & $6$ & $31$ & $3$ & $3$ &$179$ & $6$ & $179$ & $3$& $3$\\
  $127$ & $6$ & $127$ & $3$ & $3$ &$188$ & $8$ & $47$ & $4$ & $4$\\
  $128$ & $6$ & $128$ & $3$ & $3$ &$199$ & $8$ & $199$ & $4$ & $4$\\
  $133$ & $8$ & $19$ & $4$ & $4$ &$215$ & $6$ & $215$ & $4$ & $3$\\
  $135$ & $6$ & $135$ & $4$ & $3$ &$239$ & $6$ & $239$ & $3$ & $3$\\
  $139$ & $6$ & $139$ & $3$ & $3$ &$251$ & $8$ & $251$ & $4$ & $4$\\
  $141$ & $6$ & $47$ & $3$ & $3$ &$311$ & $8$ & $311$ & $4$ & $4$\\
  \hline
\end{tabular}
\end{center}
\end{prop}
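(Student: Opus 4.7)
The plan is to apply Proposition \ref{prop:UB} to the degree-2 quotient map $\pi_d : X_0(N) \to Y := X_0(N)/w_d$, which is defined over $\Q$ since every Atkin-Lehner involution is. This immediately yields
\[
\gon_\Q X_0(N) \leq 2\cdot\gon_\Q Y \quad\text{and}\quad \gon_\C X_0(N) \leq 2\cdot\gon_\C Y.
\]
So the task reduces to bounding $\gon_\Q Y$ and $\gon_\C Y$ for each quotient $Y$ in the table.

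For the uniform $\C$-gonality bound $\gon_\C X_0(N) \leq 6$: every $Y$ in the table has $g(Y) \in \{3,4\}$. Any curve of genus $3$ is at most trigonal over $\C$ (either hyperelliptic, or the canonical image is a smooth plane quartic, from which projection away from any point gives a $g^1_3$), and any curve of genus $4$ is trigonal over $\C$ (the canonical model is the intersection of a quadric and cubic in $\PP^3$, and either ruling of the quadric cuts a $g^1_3$). Hence $\gon_\C Y \leq 3$, giving the claim.

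For the $\Q$-gonality bound: in every case $Y$ carries the images of the cusps $0, \infty \in X_0(N)(\Q)$, so $Y(\Q) \neq \emptyset$. Proposition \ref{prop_A.1}(i) then gives $\gon_\Q Y \leq g(Y)$. This matches the listed value $\gon_\Q Y$ in all rows except $N = 135$ and $N = 215$, where $g(Y) = 4$ but we claim $\gon_\Q Y \leq 3$. For these two cases the strategy is the same as in Proposition \ref{prop:trig_quot}: compute a canonical model of $X_0(N)/w_N$ from the $w_N$-invariant subspace of newforms (using the Michaud-Jacobs models cited in the introduction), enumerate rational points arising from cusps and CM points, and search the Riemann-Roch spaces $L(P_1+P_2+P_3)$ over triples $P_i \in Y(\Q)$ for an element of dimension $\geq 2$, producing an explicit degree-$3$ function in $\Q(Y)$.

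The main obstacle is computational: one must compute explicit models of the Atkin-Lehner quotients for $N$ as large as $311$, which requires $q$-expansions of the Atkin-Lehner eigenform basis to sufficient precision to generate the canonical ideal, and then enough rational points on $Y$ to either populate the Riemann-Roch search (for the trigonal-quotient cases $N = 135, 215$) or simply to invoke Proposition \ref{prop_A.1}(i). Once a model and a rational point are in hand, the remaining steps are routine; the computations for each row are carried out in the accompanying \texttt{Magma} code and the resulting upper bounds assembled into the table.
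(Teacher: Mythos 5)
Your proposal is correct and follows essentially the same route as the paper: pull back along the degree-$2$ quotient map $X_0(N)\to X_0(N)/w_d$, bound $\gon_\Q Y\leq g(Y)$ via \Cref{prop_A.1}(i) using the rational cusps, note that genus $3$ and $4$ curves satisfy $\gon_\C Y\leq 3$, and handle $N=135,215$ by an explicit computation of a degree-$3$ rational function on the genus-$4$ quotient. The only (immaterial) difference is that the paper finds the trigonal map for $N=135,215$ from the canonical model and checks it is defined over $\Q$, whereas you search Riemann--Roch spaces of rational degree-$3$ divisors; both are routine explicit computations.
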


\begin{proof}
In all the cases above $Y$ is known to not be hyperelliptic. As there exists a morphism of degree $2$ over $\Q$ to $X_0(N)/ w_d $, it follows that $$\gon_{\Q}(X_0(N))\leq 2 \gon_{\Q}(Y) \leq 2g(Y).$$
In the cases $N=135$ and $215$ in the table above where we have $\gon_\Q Y \leq 3 <g(Y)=4$, this was obtained by explicitly computing the trigonal map $Y\rightarrow \PP^1$ and observing that it is defined over $\Q$. 
\end{proof}

\begin{prop}\label{prop4.11}
The $\Q$-gonality of $X_0(N)$ is $\leq8$ for the following values of $N$, with $Y:=X_0(N)/\left<w_{d_1},w_{d_2}\right>$:\\
\begin{center}
\begin{tabular}{|c|c|c||c|c|c|}
  \hline
  $N$ & $d_1$, $d_2$ & $g(Y)$ & $N$ & $d_1$, $d_2$ & $g(Y)$\\
  \hline
    $102$ & $2$, $51$ & $2$ & $171$ & $9$, $19$ & $3$\\
    $106$ & $2$, $53$ & $2$ & $190$ & $19$, $95$ & $2$\\
    $114$ & $3$, $38$ & $2$ & $195$ & $5$, $39$ & $3$\\
    $120$ & $8$, $15$ & $2$ & $205$ & $5$, $41$ & $2$\\
    $126$ & $2$, $63$ & $2$ & $206$ & $2$, $103$ & $2$\\
    $130$ & $10$, $26$ & $2$ & $209$ & $11$, $19$ & $2$\\
    $134$ & $2$, $67$ & $2$ & $213$ & $3$, $71$ & $2$\\
    $138$ & $3$, $69$ & $2$ & $221$ & $13$, $17$ & $2$\\
    $153$ & $9$, $17$ & $2$ & $279$ & $9$, $31$ & $5$\\
    $158$ & $2$, $79$ & $2$ & $284$ & $4$, $71$ & $2$\\
    $165$ & $11$, $15$ & $3$ & $287$ & $7$, $41$ & $2$\\
    $166$ & $2$, $83$ & $2$ & $299$ & $13$, $23$ & $2$\\
    $168$ & $24$, $56$ & $4$ & & &\\
  \hline
\end{tabular}
\end{center}
\end{prop}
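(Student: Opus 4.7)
The plan is to treat this as a direct generalization of \Cref{prop4.10}, but quotienting by a group of order $4$ rather than $2$. Since Atkin-Lehner involutions commute and each has order $2$, and since $w_{d_1} \neq w_{d_2}$, the subgroup $H:=\langle w_{d_1},w_{d_2}\rangle \subset \Aut(X_0(N))$ is isomorphic to $(\Z/2\Z)^2$ and has order $4$. Both generators are defined over $\Q$, hence so is the quotient map $\pi:X_0(N)\rightarrow Y$ of degree $4$. By \Cref{prop:UB},
$$\gon_\Q(X_0(N)) \leq 4\cdot\gon_\Q(Y),$$
so it suffices to prove $\gon_\Q(Y)\leq 2$ in every listed case; that is, to show that each $Y$ is hyperelliptic over $\Q$.

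For the vast majority of the entries we have $g(Y)=2$, and in this case the result is automatic: for any genus $2$ curve $Y$ over a perfect field $k$, the canonical linear system $|K_Y|$ has dimension $1$ and degree $2$, and the target $\PP(H^0(Y,\Omega_Y)^*)$ is $\PP^1_k$ since the vector space has dimension $2$ over $k$. Hence every genus $2$ curve has $\gon_k = 2$.

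The remaining cases are the five entries with $g(Y)\geq 3$, namely $N\in\{165,168,171,195,279\}$ with $g(Y)\in\{3,3,3,3,5\}$ and one genus $4$ case $N=168$. For these, I would compute an explicit canonical model of $Y$ using $q$-expansions of cusp forms in $S_2(\Gamma_0(N))$ that are simultaneously fixed by $w_{d_1}^*$ and $w_{d_2}^*$, truncated to sufficient precision for the Petri relations. Hyperellipticity can then be established either by observing that the canonical image of $Y$ is a rational normal curve of degree $g(Y)-1$ (so $Y$ is forced to be hyperelliptic) or, more directly, by exhibiting an effective $\Q$-rational divisor $D$ on $Y$ of degree $2$ with $\ell(D)\geq 2$ via a Riemann-Roch search using sums of rational and/or low-degree CM points. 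A convenient source of such points is the pullback structure from smaller-level quotients $Y/w_e$.

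The main obstacle is the higher-genus verification for $N=168$ and especially $N=279$, where the genus $5$ quotient imposes the strongest hyperellipticity constraint and where the model computation is heaviest (the cusp form spaces are large and Petri's theorem requires many quadratic relations). Once these five cases are settled by explicit \texttt{Magma} computation, the uniform degree-$4$ quotient argument delivers the bound $\gon_\Q(X_0(N))\leq 8$ simultaneously for every $N$ in the table.
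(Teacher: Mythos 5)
Your argument is correct and is essentially the proof in the paper: a degree-$4$ quotient map $X_0(N)\to Y$ combined with $\gon_\Q(Y)=2$ gives the bound $4\cdot 2=8$ via \Cref{prop:UB}. The only difference is that the paper dispenses with your proposed model computations for the five quotients of genus $\geq 3$ by citing the classification of hyperelliptic Atkin--Lehner quotients in \cite{FurumotoHasegawa1999}, which covers every $Y$ in the table (and since each $Y$ has a rational cusp, hyperellipticity over $\C$ already yields a degree-$2$ map to $\PP^1$ defined over $\Q$, as you implicitly use).
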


\begin{proof}
There exists a morphism of degree $4$ over $\Q$ to $X_0(N)/\left<  w_{d_1}, w_{d_2} \right>$. All these quotients are hyperelliptic by \cite{FurumotoHasegawa1999}. Therefore, $\gon_{\Q}(X_0(N)) \leq 4\cdot2=8$.
\end{proof}

\begin{prop}\label{prop4.13_UB}
The $\Q$-gonality of $X_0(N)$ is at most $6$ for $N$ in the table below, where $Y:=X_0(N)/w_d$. 
\begin{center}
\begin{tabular}{|c|c|c|c||c|c|c|c|}
\hline
$N$ & $g(X_0(N))$ & $d$ & $g(Y)$ & $N$ & $g(X_0(N))$ & $d$ & $g(Y)$\\
  \hline
  
    $105$ & $13$ & $35$ & $3$&
    $147$ & $11$ & $3$ & $5$\\

    $118$ & $14$ & $59$ & $3$&
    $149$ & $12$ & $149$ & $3$\\
    
    $122$ & $14$ & $122$ & $5$&
    $162$ & $16$ & $162$ & $7$\\

    $123$ & $13$ & $41$ & $3$&
    $164$ & $19$ & $164$ & $6$\\

    $124$ & $14$ & $31$ & $3$&
    $181$ & $14$ & $181$ & $5$\\

    $139$ & $11$ & $139$ & $3$&
    $227$ & $19$ & $227$ & $5$\\

    $141$ & $15$ & $47$ & $3$&
    $239$ & $20$ & $239$ & $3$\\

    $146$ & $17$ & $146$ & $5$ & & & &\\

    \hline
\end{tabular}
\end{center}
\end{prop}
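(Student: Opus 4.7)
The plan is to leverage the degree-$2$ quotient map $\pi \colon X_0(N) \to Y := X_0(N)/w_d$ for each $(N,d)$ listed in the table, combined with a degree-$3$ morphism $Y \to \PP^1$ over $\Q$. Since \Cref{prop:UB} gives $\gon_\Q X_0(N) \leq 2 \cdot \gon_\Q Y$, it suffices to prove $\gon_\Q Y \leq 3$ for each quotient $Y$, exactly in the spirit of \Cref{prop:trig_quot}.

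For the cases where $g(Y) = 3$ (namely $N = 105, 118, 123, 124, 139, 141, 149, 239$), I would first verify that $Y$ is non-hyperelliptic, either by using the classification of hyperelliptic Atkin--Lehner quotients in \cite{FurumotoHasegawa1999} or by directly inspecting the canonical map on a computed model. A non-hyperelliptic curve of genus $3$ embeds canonically as a smooth plane quartic, and projection from any $\Q$-rational point gives a degree-$3$ map to $\PP^1$ defined over $\Q$; the required rational point comes for free from the rational cusps of $X_0(N)$ descending to $Y$.

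For the higher-genus cases $g(Y) \in \{5,6,7\}$ (namely $N = 122, 146, 147, 162, 164, 181, 227$), I would proceed as in \Cref{prop:trig_quot}: compute an explicit model of $Y$ using the code of \cite{quad_pts}, enumerate $\Q$-rational points (cusps, CM points, and images of rational points from related modular curves), and search the Riemann--Roch spaces $L(P_1 + P_2 + P_3)$ with $P_i \in Y(\Q)$ until one is found with $\ell \geq 2$. This produces a degree-$3$ function $f \in \Q(Y)$, and $\pi^* f$ is the desired degree-$6$ function in $\Q(X_0(N))$.

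The main obstacle lies in the higher-genus quotients, especially $N = 162$ with $g(Y) = 7$, where the Brill--Noether number $\rho(7,1,3) = -3$ means a generic curve of this genus is not trigonal; existence of a $g^1_3$ on $Y$ is a genuinely special property that must be confirmed computationally, and the $\Q$-rationality must be witnessed by an effective divisor supported on known rational points. The Riemann--Roch search scales with the number of rational points cubed, but since $X_0(N)$ has several rational cusps that descend to $Y$, the search space is manageable. In any borderline case one can enlarge the search to divisors of the form $P_1 + P_2 + P_3$ with $P_i$ in a small set of low-degree places (rather than insisting each $P_i$ be rational) and then restrict to $\Q$-rational combinations.
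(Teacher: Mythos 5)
Your treatment of the genus-$3$ quotients is essentially the paper's: for $N=105,118,123,124,139,141,149,239$ the paper simply observes that $Y$ has genus $3$ (and a rational point coming from a cusp), so $\gon_\Q Y\le 3$; your extra step of checking non-hyperellipticity is harmless, since a hyperelliptic $Y$ would only improve the bound. For the quotients of genus $\ge 5$, however, the paper takes a genuinely different and cleaner route: it quotes the classification of Atkin--Lehner quotients that are trigonal over $\C$ due to Hasegawa--Shimura \cite{HasegawaShimura1999}, and then applies \Cref{Jeon-Park}~(i) --- a consequence of the Tower theorem --- to conclude that a curve of genus $\ge 5$ which is trigonal over $\C$ and has a rational point is automatically trigonal over $\Q$. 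This yields the degree-$3$ map over $\Q$ with no further computation.

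Your replacement for that step, a Riemann--Roch search over divisors $P_1+P_2+P_3$ with $P_i\in Y(\Q)$, is where the gap lies. For $g(Y)\ge 5$ the $g_3^1$ is unique by Castelnuovo--Severi, so you are hunting for one specific divisor class; its effective representatives are exactly the fibres of the trigonal map, and the fibre over the image of a rational point $P$ has the form $P+E$ with $E$ an effective $\Q$-rational divisor of degree $2$ that need not split into two rational points. Restricting the support to $Y(\Q)$ can therefore miss the pencil entirely, and enlarging to ``a small set of low-degree places'' gives no termination guarantee either: a failed search proves nothing, and without the input from \cite{HasegawaShimura1999} you do not even know that a $g_3^1$ exists --- as you yourself note, $\rho(7,1,3)<0$ for $N=162$, so trigonality there is special and cannot be assumed. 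The missing ingredient is precisely \Cref{Jeon-Park}~(i) applied to the known $\C$-trigonality of these quotients, which turns an open-ended computation into a citation.
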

\begin{proof}
For $N=105,118,123,124,139,141,149,239$ the quotients $Y=X_0(N)/w_d$ are trigonal over $\Q$ since they are of genus $3$. For $N=122,146,147,162,164,181,227$, the quotients are trigonal over $\Q$ since they are trigonal over $\C$ of genus $\geq5$ by \cite{HasegawaShimura1999} and we can apply \Cref{Jeon-Park} (i). It follows that $\gon_\Q X_0(N)\leq 6$. 
\end{proof}

\begin{prop}\label{tetragonal_quotients}
    $X_0(N)$ has $\Q$-gonality at most $8$ for
    $$N=152,157,163,183,185,197,203,211,223,263,269,359.$$
\end{prop}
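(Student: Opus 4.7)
The plan is to show, for each $N$ in the list, that $X_0(N)$ admits an Atkin-Lehner quotient $Y = X_0(N)/w_d$ with $\gon_\Q Y \leq 4$. Since the quotient map $X_0(N)\to Y$ has degree $2$ and is defined over $\Q$, Proposition~\ref{prop:UB} then gives $\gon_\Q X_0(N)\leq 2\gon_\Q Y \leq 8$, as required.

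To verify that a chosen $Y$ is tetragonal over $\Q$, I would split by genus. When $g(Y)\leq 4$, a $\Q$-rational cusp of $X_0(N)$ supplies a $\Q$-point on $Y$, so Proposition~\ref{prop_A.1}(i) immediately yields $\gon_\Q Y \leq g(Y) \leq 4$; this is the uniform mechanism behind the genus-$4$ rows of Proposition~\ref{prop4.10}, and is the first case I would try. When $g(Y) \geq 10$, I would show $\gon_\C Y \leq 4$ — either by recognizing $Y$ among the classified $\C$-tetragonal modular curves from \cite{JeonPark05} or by exhibiting a degree-$4$ map over $\C$ — and then invoke Corollary~\ref{Jeon-Park}(ii) to descend the tetragonal map to $\Q$. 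In the intermediate range $5\leq g(Y)\leq 9$ the fallback is either to recognize $Y$ as bielliptic over $\Q$, by finding a second Atkin-Lehner involution $w_{d'}$ such that $X_0(N)/\langle w_d, w_{d'}\rangle$ is an elliptic curve over $\Q$ (composition of $Y\to X_0(N)/\langle w_d, w_{d'}\rangle$ with any degree-$2$ function on the elliptic curve gives a degree-$4$ map $Y\to\PP^1$), or to construct a degree-$4$ function directly by searching Riemann-Roch spaces of $\Q$-rational effective divisors of degree $4$ supported on cusps and other small-degree known points, as in Proposition~\ref{prop4.7}.

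For the primes $N\in\{157,163,197,211,223,263,269,359\}$ the only nontrivial Atkin-Lehner involution is $w_N$, so the analysis reduces to the single quotient $Y = X_0^+(N)$; for the composite values $N\in\{152,183,185,203\}$ several choices of $d$ and of pairs $(d,d')$ are available, and one expects at least one to yield a tractable tetragonal quotient. The main obstacle is the case-by-case bookkeeping: for the larger primes the genus of $X_0^+(N)$ is big enough that explicit computation in its function field becomes heavy, and one needs to select the right method for each $N$. The cleanest route is to rely on Corollary~\ref{Jeon-Park}(ii) wherever $g(Y)\geq 10$, since this sidesteps the need for an explicit model of $Y$; the residual cases of intermediate genus are the ones where a direct Riemann-Roch search or an ad hoc bielliptic identification is likely required to complete the proof.
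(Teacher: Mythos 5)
Your approach is essentially the paper's: take $Y=X_0^+(N)$, bound $\gon_\Q X_0(N)\leq 2\gon_\Q Y$, and show $\gon_\Q Y\leq 4$. In fact all twelve quotients $X_0^+(N)$ here have genus $5$ or $6$, so neither of your preferred clean routes (genus $\leq 4$ via Proposition~\ref{prop_A.1}, or genus $\geq 10$ via Corollary~\ref{Jeon-Park}(ii)) ever applies; the paper settles every case by your stated fallback, explicitly computing a degree-$4$ function in $\Q(X_0^+(N))$ using \texttt{Magma}'s \texttt{Genus5GonalMap} and \texttt{Genus6GonalMap}.
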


\begin{proof}
    The quotients $X_0^+(N)$ have genus $5$ or $6$ and are not trigonal by \cite{HasegawaShimura1999}. We explicitly find degree $4$ functions in $\Q(X_0^+(N))$ using the \texttt{Magma} functions Genus5GonalMap(C) and Genus6GonalMap(C).  It follows that $\gon_\Q X_0(N)\leq 2\cdot \gon_\Q X_0^+(N)\leq 8$.
\end{proof}

\subsection{Lower bounds obtained by reduction modulo $p$}

As mentioned in \Cref{sec:lb}, an important technique for obtaining a lower bound for the $\Q$-gonality is by computing the $\F_p$-gonality. We will use certain tricks to greatly reduce the computational time needed to give a lower bound for the $\F_p$-gonality. The following two propositions explain how we do this in more detail.

\begin{prop}\label{prop4.26}
The $\Q$-gonality of $X_0(N)$ for $N=99$ is at least $6$.
\end{prop}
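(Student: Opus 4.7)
The upper bound $\gon_\Q X_0(99)\leq 6$ is already given in Proposition~\ref{prop4.9} via the degree-$3$ degeneracy map to the hyperelliptic curve $X_0(33)$, so the point of this proposition is the matching lower bound. My plan is to establish it by working modulo a small prime of good reduction: since $99=3^2\cdot 11$, each of $p=2,5,7$ is of good reduction, and I would pick the one that minimizes the combinatorial cost described below. Then $\gon_\Q X_0(99)\geq \gon_{\F_p}X_0(99)$, so it suffices to prove the $\F_p$-gonality is at least $6$.

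As a first, essentially free, attempt I would apply Lemma~\ref{lem:fp}: compute $n_q:=\#X_0(99)(\F_q)$ for a small prime power $q$ of good reduction and check whether $n_q>5(q+1)$. If this succeeds for some $q$, we are done immediately. I do not expect this to be enough at such low $q$, so the main argument will be the following. Assume for contradiction that $\gon_{\F_p}X_0(99)\leq 5$, and let $f\in\F_p(X_0(99))$ be a function of degree $d\leq 5$. By Lemma~\ref{lem:F_p_div_search}(a) we may replace $f$ by a function $g$ of the same degree whose polar divisor $D$ is an effective $\F_p$-rational divisor of degree $d$ whose support is contained in a set $S\subseteq X_0(99)(\F_p)$ with $|S|\leq \lfloor n/(p+1)\rfloor$, where $n=n_p$. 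Padding $D$ with a rational point if necessary reduces to the case $d=5$ without decreasing $\ell(D)$.

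The actual proof is then a finite check: enumerate all subsets $S\subseteq X_0(99)(\F_p)$ with $|S|\leq \lfloor n/(p+1)\rfloor$ and all effective $\F_p$-rational divisors of degree $5$ supported in $S$, and for each such $D$ compute $\ell(D)$ via Riemann--Roch on the explicit model of $X_0(99)$ (using the Michaud-Jacobs code cited in the introduction and \texttt{Magma}'s function-field machinery). If $\ell(D)=1$ in every case we obtain a contradiction and conclude $\gon_{\F_p}X_0(99)\geq 6$.

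The main obstacle is purely computational. The genus of $X_0(99)$ is $9$, so individual Riemann--Roch calculations are not free, and the outer loop size is $\binom{n}{\lfloor n/(p+1)\rfloor}$ times the number of ways to distribute $5$ units of multiplicity over the support. The key optimization is the choice of $p$: increasing $p$ from $2$ to $5$ or $7$ typically collapses $\lfloor n/(p+1)\rfloor$ to a small number (often $1$ or $2$), which shrinks the enumeration dramatically even though $n$ itself grows. I expect one of $p=5$ or $p=7$ to yield a feasible search that terminates with $\ell(D)=1$ throughout, completing the proof.
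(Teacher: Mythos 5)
Your overall strategy is the paper's: reduce modulo a good prime (the paper uses $p=5$, where $\#X_0(99)(\F_5)=6$), invoke the pigeonhole statement of \Cref{lem:F_p_div_search} to constrain the polar divisor of a hypothetical degree-$\leq 5$ function, and then rule out such functions by a finite Riemann--Roch search. However, there is a genuine gap in how you set up the finite search. \Cref{lem:F_p_div_search}(a) does \emph{not} say that the polar divisor is supported inside a set $S\subseteq X(\F_p)$ of size $\lfloor n/(p+1)\rfloor$; it says that at most $\lfloor n/(p+1)\rfloor$ of the points in its support are $\F_p$-rational. The polar divisor is a degree-$d$ effective $\F_p$-rational divisor whose support will in general contain closed points of degree $2,3,4,5$ over $\F_p$. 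Your enumeration --- ``all effective $\F_p$-rational divisors of degree $5$ supported in $S$'', with complexity counted as distributing $5$ units of multiplicity over at most $\lfloor n/(p+1)\rfloor$ rational points --- therefore covers only a tiny and logically insufficient part of the search space. With $p=5$ and $n=6$ you would test only the six divisors $5P$, whereas the cases that actually have to be excluded are divisors such as $D_5$, $D_4$, $D_3+D_2$, $D_2+D_2'$ (no rational point in the support) and $P+D_4$, $P+D_3$, $P+D_2+D_2'$ (exactly one rational point), where $D_i$ denotes an irreducible $\F_5$-rational effective divisor of degree $i$. Because the implication ``a degree-$\leq 5$ function exists $\Rightarrow$ one exists with polar divisor supported on rational points'' is false, your computation returning no functions would not yield the desired contradiction.

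A secondary, fixable issue: padding a degree-$d<5$ polar divisor with a rational point to reach degree $5$ can increase the number of rational points in the support beyond the bound $\lfloor n/(p+1)\rfloor$, so after padding your constraint no longer applies verbatim. It is cleaner to do what the paper does: either search effective divisors of each degree $d\leq 5$ separately, or observe that it suffices to search maximal configurations (since $\ell(D)\geq 2$ implies $\ell(D+D')\geq 2$ for effective $D'$), in both cases allowing arbitrary higher-degree closed points in the support and constraining only the number of degree-one places.
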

\begin{proof}
Let $X:=X_0(99)$. We compute $\#X(\F_5)=6$. Suppose $f$ is an $\F_5$-rational function of degree $\leq 5$. By the pigeonhole principle (as in \Cref{lem:F_p_div_search}), it follows that either there is a point $c\in \PP^1(\F_5)$ such that $f^{-1}(c)$ contains no $\F_5$-rational points, or $\#f^{-1}(c)(\F_5)=1$ for every $c\in \PP^1(\F_5)$.

Suppose the former and let $g(x):=1/(f(x)-c)$. Hence $g^{-1}(\infty)$ has no $\F_5$-rational points. Hence $g$ lies in the Riemann-Roch space of a divisor of one of the following forms: $D_5$, $D_4$, $D_3+D_2$ or $D_2+D_2'$, where $D_i$ is an irreducible $\F_5$-rational effective divisor of degree $i$. Searching among the Riemann-Roch spaces of such divisors, we find that there are no non-constant functions.

Suppose now the latter. Now we can fix a $P\in X(\F_5)$ and suppose without loss of generality that $g^{-1}(\infty)(\F_5)={P}$. Hence $g$ will be found in the Riemann-Roch spaces of $P+D_4$, $P+D_2+D_2'$ or $P+D_3$, where the notation is as before. Searching among the Riemann-Roch spaces of such divisors, we find that there are no non-constant functions.

\end{proof}

\begin{prop}\label{prop:130}
The $\Q$-gonality of $X_0(N)$ for $N=130$ is at least $8$.
\end{prop}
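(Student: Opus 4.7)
The plan is to mimic the strategy used for $X_0(99)$ in \Cref{prop4.26}: pick a prime of good reduction, apply the pigeonhole bound \Cref{lem:F_p_div_search} to restrict the shape of the polar divisor of any low-degree function, and then exhaustively check Riemann-Roch spaces. Since $130=2\cdot 5\cdot 13$, the smallest prime of good reduction for $X:=X_0(130)$ is $p=3$. Using the model of $X$ over $\F_3$ produced by the Michaud-Jacobs \texttt{Magma} code referenced in the introduction, I would first compute $n:=\#X(\F_3)$. If it turns out that $n\geq 29 = 7\cdot 4+1$, then \Cref{lem:fp} already gives $\gon_\Q(X)\geq 8$ and there is nothing more to do; otherwise, the following finite search is needed.

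Suppose for contradiction that there is an $f\in \F_3(X)$ of degree $d\leq 7$. By \Cref{lem:F_p_div_search}(a), replacing $f$ by $(f-c)^{-1}$ for a suitable $c\in \PP^1(\F_3)$ we may assume that the polar divisor of $f$ meets $X(\F_3)$ in at most $m:=\lfloor n/4\rfloor$ points. Hence $f$ lies in $L(D)$ for some effective $\F_3$-rational divisor
\[
D \;=\; \sum_{i} a_i P_i + D',
\]
with $P_i\in X(\F_3)$, $\sum_i a_i \leq m$, and $D'$ an effective $\F_3$-rational divisor supported on closed points of residue degree $\geq 2$, subject to $\sum_i a_i+\deg D'=d$. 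For each $d\in\{2,3,\ldots,7\}$ I would enumerate all such shapes of $D$ and verify in \texttt{Magma} that $\ell(D)\leq 1$ throughout; this gives $\gon_{\F_3}(X)\geq 8$, hence $\gon_\Q(X)\geq 8$ by $\gon_{\F_\fp}(C)\leq \gon_K(C)$.

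The main obstacle is purely computational: $X$ has genus $14$, so the canonical model is large, individual Riemann-Roch computations are expensive, and the number of candidate divisors at $d=7$ can be substantial. To keep the search tractable I would only enumerate one representative per $\mathrm{Aut}(X/\F_3)$-orbit (exploiting the Atkin-Lehner involutions $w_2,w_5,w_{10},w_{13},w_{26},w_{65},w_{130}$, all defined over $\F_3$) and prune divisors that differ from an already-checked one by an effective principal part. Should $p=3$ give too permissive a bound $m$ (making the $d=7$ enumeration infeasible), the natural fallback is to repeat the argument at $p=7$, where $\lfloor \#X(\F_7)/8\rfloor$ is typically small enough that the allowed support of the polar divisor is very constrained, at the cost of working over a larger field.
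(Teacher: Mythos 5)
Your overall strategy is exactly the paper's: reduce modulo $p=3$ (the paper computes $\#X_0(130)(\F_3)=8$, so neither \Cref{lem:fp} nor the preliminary check applies and the finite search is indeed necessary), use the pigeonhole bound of \Cref{lem:F_p_div_search} to constrain the rational part of the polar divisor, and then exhaustively check Riemann--Roch spaces, just as in \Cref{prop4.26}. However, there is a genuine error in how you translate the lemma into the list of divisors to be searched. \Cref{lem:F_p_div_search}(a) bounds the number of \emph{distinct} $\F_p$-rational points in the support of the polar divisor by $m=\lfloor n/(p+1)\rfloor=2$; it says nothing about the multiplicities at those points. Your enumeration imposes $\sum_i a_i\leq m$ on the divisor $D=\sum_i a_iP_i+D'$, i.e.\ you bound the total degree of the rational part by $2$. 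This excludes perfectly admissible candidates such as $7P$, $5P+2Q$, or $3P+D_4$, all of which have at most two rational points in their support. A function whose polar divisor is, say, $7P$ lies in $L(7P)$ but in none of the spaces $L(a_1P_1+a_2P_2+D')$ with $a_1+a_2\leq 2$, so finding no non-constant functions in your (smaller) list would not rule out a degree-$7$ map, and the claimed lower bound would not follow. The correct constraint is that the index set $\{P_i\}$ has at most $m$ elements, with the $a_i$ arbitrary subject to $\sum_i a_i+\deg D'=d$.

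Two smaller remarks. First, the paper squeezes out an extra saving from the case analysis: either some fibre of $f$ over $\PP^1(\F_3)$ contains at most one rational point (search divisors with at most one rational point in the support), or \emph{every} fibre contains exactly two, in which case one may fix a single rational point $P$ once and for all and let only the second point $Q$ vary; this is cheaper than enumerating all $\binom{8}{2}$ unordered pairs. Your proposed reduction via the Atkin--Lehner action is a legitimate alternative way to cut the search, but it requires verifying that the group acts suitably on the candidate divisors, whereas the paper's trick is immediate from the pigeonhole argument. Second, your fallback to $p=7$ is unnecessary here and would be substantially more expensive (degree-$7$ divisors over $\F_7$ on a genus $14$ curve); $p=3$ suffices once the enumeration is corrected.
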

\begin{proof}
Let $X:=X_0(130)$. We compute $\#X(\F_3)=8$. Suppose $f$ is an $\F_3$-rational function of degree $\leq 7$. By the pigeonhole principle (as in \Cref{lem:F_p_div_search}), it follows that either there is a point $c\in \PP^1(\F_3)$ such that $f^{-1}(c)(\F_3)$ contains at most one $\F_3$-rational point or $\#f^{-1}(c)(\F_3)=2$ for every $c\in \PP^1(\F_3)$.

Suppose the former and let $g(x):=1/(f(x)-c)$. Hence $g^{-1}(\infty)$ has one $\F_3$-rational point. Hence $f$ lies in the Riemann-Roch space of an effective degree $7$ divisor supported on at most $1$ rational point.
Searching among the Riemann-Roch spaces of such divisors, we find that there are no non-constant functions.

Suppose now the latter. Now we can fix a $P\in X(\F_3)$ and suppose without loss of generality that $g^{-1}(\infty)(\F_3)=\{P, Q\}$ for some $Q\in X(\F_3)$. Hence $g$ will be found in the Riemann-Roch space of an effective degree $7$ divisor for which the set of rational points in the support is exactly $\{P,Q\}$, with $Q$ varying through all $Q\in X(\F_3)$. Searching among the Riemann-Roch spaces of such divisors, we find that there are no non-constant functions. 
\end{proof}

We apply a similar approach by producing a lower bound for the $\F_p$-gonality to obtain a lower bound for the $\Q$-gonality for a large number of $N$.

\begin{prop}\label{prop:fp}
A lower bound (LB) for the $\Q$-gonality of $X_0(N)$ is given in the following table, where $p$ is a prime of good reduction for $X_0(N)$:\\

\begin{center}
\begin{tabular}{|c|c|c|c||c|c|c|c||c|c|c|c||c|c|c|c|}
\hline
$N$ & \textup{LB} & $p$ & \textup{time} & $N$ & \textup{LB} & $p$ & \textup{time} & $N$ & \textup{LB} & $p$ & \textup{time} & $N$ & \textup{LB} & $p$ & \textup{time}\\
  \hline
$38$ & $4$ & $5$ & $2$ \textup{sec} &
$115$ & $6$ & $3$ & $56$ \textup{sec} &
$151$ & $6$ & $5$ & $94$ \textup{sec} &
$181$ & $6$ & $3$ & $9$ \textup{sec} \\

$44$ & $4$ & $5$ & $4$ \textup{sec} &
$116$ & $6$ & $3$ & $10$ \textup{sec} &
$152$ & $8$ & $3$ & $20.5$ \textup{min} &
$187$ & $8$ & $2$ & $1.5$ \textup{hrs} \\

$53$ & $4$ & $5$ & $9$ \textup{sec} &
$117$ & $6$ & $5$ & $10$ \textup{sec} &
$153$ & $8$ & $5$ & $2$ \textup{hrs} &
$189$ & $8$ & $2$ & $3$ \textup{min} \\

$61$ & $4$ & $3$ & $1$ \textup{sec} &
$118$ & $6$ & $3$ & $12$ \textup{sec} &
$154$ & $8$ & $5$ & $2$ \textup{days} &
$192$ & $8$ & $5$ & $4$ \textup{days} \\

$76$ & $6$ & $5$ & $8$ \textup{sec} &
$122$ & $6$ & $3$ & $55$ \textup{sec} &
$157$ & $8$ & $3$ & $37$ \textup{sec} &
$193$ & $6$ & $3$ & $28$ \textup{sec} \\

$82$ & $6$ & $5$ & $62$ \textup{sec} &
$127$ & $6$ & $3$ & $24$ \textup{sec} &
$160$ & $8$ & $7$ & $173$ \textup{sec} &
$196$ & $8$ & $5$ & $2.9$ \textup{hrs} \\

$84$ & $6$ & $5$ & $67$ \textup{min} &
$128$ & $6$ & $3$ & $4$ \textup{sec} &
$162$ & $6$ & $5$ & $53$ \textup{sec} &
$197$ & $6$ & $3$ & $36$ \textup{min} \\

$86$ & $6$ & $3$ & $135$ \textup{sec} &
$130$ & $8$ & $3$ & $20$ \textup{min} &
$163$ & $7$ & $5$ & $3$ \textup{min} &
$198$ & $8$ & $5$ & $7$ \textup{days} \\

$93$ & $6$ & $5$ & $4$ \textup{sec} &
$132$ & $8$ & $6$ & $22.2$ \textup{hrs} &
$169$ & $6$ & $5$ & $2.5$ \textup{min} &
$200$ & $8$ & $3$ & $1.6$ \textup{hrs} \\

$99$ & $6$ & $5$ & $94$ \textup{sec} &
$134$ & $8$ & $3$ & $1.2$ \textup{hrs} &
$170$ & $8$ & $3$ & $100.5$ \textup{hrs} &
$201$ & $8$ & $2$ & $4$ \textup{hrs} \\

$102$ & $8$ & $5$ & $3.3$ \textup{hrs} &
$136$ & $8$ & $5$ & $13.4$ \textup{hrs} &
$172$ & $8$ & $3$ & $3.3$ \textup{hrs} &
$217$ & $8$ & $2$ & $2$ \textup{min} \\

$106$ & $8$ & $7$ & $35$ \textup{hrs} &
$137$ & $6$ & $3$ & $4$ \textup{sec} &
$175$ & $2$ & $2$ & $18.7$ \textup{sec} &
$229$ & $8$ & $3$ & $6.5$ \textup{min} \\

$108$ & $6$ & $5$ & $27$ \textup{min} &
$140$ & $8$ & $3$ & $25.6$ \textup{hrs} &
$176$ & $8$ & $3$ & $12$ \textup{min} &
$233$ & $8$ & $2$ & $2$ \textup{hrs} \\

$109$ & $5$ & $3$ & $83$ \textup{sec} &
$144$ & $6$ & $5$ & $56$ \textup{sec} &
$178$ & $8$ & $3$ & $4.5$ \textup{hrs} &
$241$ & $8$ & $2$ & $2.5$ \textup{min} \\

$112$ & $6$ & $3$ & $10$ \textup{hrs} &
$147$ & $6$ & $5$ & $4.5$ \textup{min} &
$179$ & $6$ & $5$ & $10$ \textup{min} &
$247$ & $8$ & $2$ & $4$ \textup{hrs} \\

$113$ & $6$ & $3$ & $4$ \textup{sec} &
$148$ & $8$ & $6$ & $3.2$ \textup{hrs} &
$180$ & $7$ & $7$ & $9$ \textup{days} &
$277$ & $8$ & $6$ & $7$ \textup{days} \\

$114$ & $8$ & $5$ & $53.2$ \textup{hrs} &
$150$ & $8$ & $7$ & $34.5$ \textup{hrs} & & & & & & & & \\

    \hline
\end{tabular}
\end{center}
\end{prop}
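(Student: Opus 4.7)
The plan is to establish each line of the table by the same reduction-modulo-$p$ strategy that was spelled out in detail in \Cref{prop4.26} and \Cref{prop:130}, applied uniformly. For each $N$ in the table with listed prime $p$ and lower bound $\textup{LB}$, I would produce a model of $X_0(N)$ over $\Z$ (using the code of Michaud-Jacobs referenced in the introduction), reduce mod $p$, and certify that $p$ is a prime of good reduction. It then suffices, by \eqref{lb:C} and the inequality $\gon_{\F_p}C\le \gon_\Q C$, to show that there is no $\F_p$-rational function on $X_0(N)_{/\F_p}$ of degree $\le \textup{LB}-1$.

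To bound $\gon_{\F_p}X_0(N)$ from below efficiently, the plan is to invoke \Cref{lem:F_p_div_search}(a). Set $n:=\#X_0(N)(\F_p)$ and $t:=\lfloor n/(p+1)\rfloor$. If a function $f$ of degree $d\le \textup{LB}-1$ existed, then after replacing $f$ by $g:=1/(f-c)$ for a suitable $c\in\PP^1(\F_p)$, we may assume the polar divisor $(g)_\infty$ is effective of degree $d$ and supported on at most $t$ points of $X_0(N)(\F_p)$. Hence $g$ lies in the Riemann-Roch space $L(D)$ for some effective $\F_p$-rational divisor $D$ of degree $d$ whose rational support has size $\le t$, and $\ell(D)\ge 2$. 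The task thus reduces to enumerating such divisors $D$ and verifying that in every case $\ell(D)=1$.

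Concretely, for each possible number $k\in\{0,1,\ldots,t\}$ of rational points in the support, and each partition of the remaining degree $d-k$ into degrees of irreducible non-rational effective $\F_p$-divisors (which correspond to closed points of $X_0(N)_{/\F_p}$ of degrees $2,3,\ldots,d-k$), I would loop over all finite families of such divisors, form the corresponding $D$, compute $L(D)$ in \texttt{Magma}, and check $\ell(D)=1$. The number of closed points of each residue degree $\le d$ is finite and easily enumerated from the factorization of $\prod_{m=1}^{d}(\#X_0(N)(\F_{p^m}))$, so the enumeration is finite; in the borderline cases with $t\ge 1$ one may further fix one rational point $P$ in the support (using the transitivity argument in \Cref{prop:130}) to cut down the search. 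The argument is carried out independently for each triple $(N,\textup{LB},p)$, and the recorded running times are simply the wall-clock cost of this enumeration.

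The only genuine obstacle here is computational rather than conceptual: the number of effective divisors to search grows combinatorially with $d$ and with $\#X_0(N)(\F_{p^m})$ for $m\le d$, and each Riemann-Roch computation on a high-genus model of $X_0(N)$ is expensive. This is why the prime $p$ must be chosen carefully case by case -- small enough that the number of irreducible divisors of each low degree is manageable, but large enough that $t=\lfloor n/(p+1)\rfloor$ is small (ideally $0$ or $1$), so that the pigeonhole reduction cuts down the search space as much as possible. The choices of $p$ tabulated reflect this trade-off, and the listed times (ranging from seconds to several days) correspond to the optimal $p$ we were able to find; the full verification is contained in the accompanying \texttt{Magma} code.
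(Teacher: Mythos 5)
Your proposal is correct and matches the paper's proof: the paper likewise certifies each entry by showing there are no functions of degree $<\textup{LB}$ in $\F_p(X_0(N))$ via an exhaustive Riemann--Roch search, using the pigeonhole reduction of \Cref{lem:F_p_div_search} and the divisor-support restrictions of Propositions \ref{prop4.26} and \ref{prop:130} in the computationally heavier cases, with the verification delegated to the accompanying \texttt{Magma} code. No substantive difference in approach.
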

\smallskip

\begin{proof}
In all the cases we compute that there are no functions of degree $<d$ in $\F_p(X_0(N))$, where $p$ is as listed in the table.  In computationally more demanding cases, i.e. when $d$, $p$ and the genus of $X_0(N)$ are larger, we use techniques as in Propositions \ref{prop4.26} and \ref{prop:130}. 
All the \texttt{Magma} computations proving this can be found in our repository.
\end{proof}

\begin{prop}\label{prop:prosla}
The following genus $4$ quotients $X_0(N)/w_d$ are \textbf{not} trigonal over $\Q$, where $p$ is a prime of good reduction for $X_0(N)$:\\
\begin{center}
\begin{tabular}{|c|c|c||c|c|c|}
\hline
$N$ &  $d$ &  $p$ & $N$ &  $d$ &  $p$\\
  \hline
$110$ & $55$ & $7$ & 
$188$ & $47$& $3$\\

$145$ & $29$ & $11$&
$199$ & $199$& $5$\\

$161$ & $161$& $5$ &
$251$ & $251$ & $3$\\

$173$ & $173$& $5$ &
$311$ & $311$ & $5$\\

$177$ & $59$& $5$ & & &\\

    \hline
\end{tabular}
\end{center}
\end{prop}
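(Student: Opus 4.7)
The plan is to use reduction modulo $p$, exactly in the spirit of \Cref{prop:fp}. If $X_0(N)/w_d$ were trigonal over $\Q$, then by the inequality $\gon_{\F_p}(C) \le \gon_{\Q}(C)$ for a prime $p$ of good reduction, the reduction modulo the prime listed in the table would also be trigonal over $\F_p$. Good reduction of $X_0(N)/w_d$ at $p$ follows from good reduction of $X_0(N)$ at $p$, i.e. from $p\nmid N$, which holds for every entry of the table. So it suffices, for each triple $(N,d,p)$, to verify that the reduced curve has no $g^1_3$ over $\F_p$.

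First, I would compute an explicit model of $X_0(N)/w_d$ over $\Q$ using the Michaud-Jacobs code cited in the introduction (each of these quotients has genus $4$, so the canonical model sits naturally in $\PP^3$). After reducing modulo $p$, the task is to enumerate all effective $\F_p$-rational divisors $D$ of degree $3$ on $(X_0(N)/w_d)_{\F_p}$ and check $\ell(D)=1$ for each. The enumeration is shortened by \Cref{lem:F_p_div_search}: if a degree-$3$ function existed, we could choose it so that its polar divisor is supported on at most $\lfloor \#C(\F_p)/(p+1) \rfloor$ rational points. In practice this forces the polar divisor to be one of a handful of shapes, namely $D_3$, $D_1+D_2$ (with $D_2$ irreducible of residue degree $2$), or $P+Q+R$ with $P,Q,R\in C(\F_p)$ and the number of $\F_p$-points among the support restricted to the bound above. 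For each shape one loops over the finitely many possibilities and computes the Riemann-Roch dimension; if every dimension equals $1$, the reduced curve carries no $g^1_3$, and we are done.

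The main obstacle is purely computational: the prime must be small enough to make the enumeration of effective $\F_p$-rational divisors of degree $3$ tractable, yet large enough to be a prime of good reduction. The choices in the table reflect this balance, with the larger primes ($p=7$ for $N=110$, $p=11$ for $N=145$) necessary when all smaller primes divide $N$, and the smaller choices ($p=3,5$) used where possible to keep the loop short. For the harder cases (such as $N=311,251$) the relevant bottleneck is factoring the polynomial describing $X_0(N)/w_d$ over $\F_p$, computing its function field, and iterating over Places of degree $\le 3$; all of this is standard in \texttt{Magma} once a model is in hand.

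Once each computation finishes with an empty list of effective divisors $D$ of degree $3$ satisfying $\ell(D)\ge 2$, we conclude $\gon_{\F_p}(X_0(N)/w_d)\ge 4$, whence $\gon_{\Q}(X_0(N)/w_d)\ge 4$, and therefore these genus $4$ quotients are not trigonal over $\Q$. The verifying scripts, one per row of the table, can be added to the accompanying \texttt{Magma} repository.
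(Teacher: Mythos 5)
Your proposal is correct and follows essentially the same route as the paper: reduce the genus $4$ quotient $X_0(N)/w_d$ modulo the listed prime $p$ of good reduction, verify by an exhaustive Riemann--Roch computation that there is no function of degree $3$ over $\F_p$, and conclude via $\gon_{\F_p}\leq\gon_\Q$ that the quotient is not trigonal over $\Q$. The extra implementation details you give (restricting the shape of the polar divisor via \Cref{lem:F_p_div_search}) are a reasonable optimization consistent with what the paper does in its harder $\F_p$-gonality computations.
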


\begin{proof}
We find that the quotients $X_0(N)/w_d$ have no functions of degree $3$ over $\F_p$ for $d$ and $p$ listed in the table above. 
\end{proof}

\begin{remark}
It is worth mentioning that Bars and Dalal \cite{BarsDalal22} determined all quotients $X_0^+(N)$ which are trigonal over $\Q$, thereby independently obtaining the results for $N=161,173,199,251,311$ in the above proposition. 
\end{remark}

\subsection{Lower bounds obtained by the Castelnuovo-Severi inequality}

\begin{prop}\label{prop4.13}
The $\Q$-gonality and $\C$-gonality of $X_0(N)$ are at least $6$ for $N$ in the table below, where $Y:=X_0(N)/w_d$. 
\begin{center}
\begin{tabular}{|c|c|c|c||c|c|c|c|}
\hline
$N$ & $g(X_0(N))$ & $d$ & $g(Y)$ & $N$ & $g(X_0(N))$ & $d$ & $g(Y)$\\
  \hline
  
    $105$ & $13$ & $35$ & $3$&
    $141$ & $15$ & $47$ & $3$\\
    
    $116$ & $13$ & $116$ & $4$&
    $146$ & $17$ & $146$ & $5$ \\  
    
    $118$ & $14$ & $59$ & $3$&
    $149$ & $12$ & $149$ & $3$\\
    
    $123$ & $13$ & $41$ & $3$&
    $164$ & $19$ & $164$ & $6$\\
    
    $124$ & $14$ & $31$ & $3$&
    $227$ & $19$ & $227$ & $5$\\

    $139$ & $11$ & $139$ & $3$&
    $239$ & $20$ & $239$ & $3$\\

    \hline
\end{tabular}
\end{center}
\end{prop}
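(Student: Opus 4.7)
The plan is to apply \Cref{cor:CS} to the Atkin-Lehner quotient map $\pi \colon X_0(N) \to Y = X_0(N)/w_d$, which is a non-constant morphism of degree $2$ defined over $\Q$, with the target gonality in the corollary set to $6$. Two conditions must be verified for $k \in \{\Q, \C\}$: (i) the genus inequality $g(X_0(N)) - 2g(Y) \geq 5$, and (ii) the lower bound $\gon_k(Y) > \lfloor 5/2 \rfloor = 2$, equivalently that $Y$ is non-hyperelliptic over $k$.

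Condition (i) is an immediate arithmetic check from the tabulated genera; the smallest difference $g(X_0(N)) - 2g(Y)$ equals $5$, attained at $N = 116$ and $N = 139$, and every other row exceeds this. For condition (ii), since $\gon_\C(Y) \leq \gon_\Q(Y)$ by \eqref{lb:C}, it suffices to rule out hyperellipticity over $\C$. I split by $g(Y)$. When $g(Y) = 3$, covering $N \in \{105, 118, 123, 124, 139, 141, 149, 239\}$, non-hyperellipticity is equivalent to trigonality, and in each of these cases \Cref{prop4.13_UB} has already exhibited an explicit degree-$3$ map on $Y$ defined over $\Q$ by Riemann-Roch search. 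For $N = 116$, the quotient $Y = X_0^+(116)$ of genus $4$ is trigonal over $\Q$ by \Cref{prop:trig_quot}. For $N \in \{146, 164, 227\}$ the quotient $X_0^+(N)$ is trigonal over $\C$ by the Hasegawa-Shimura classification \cite{HasegawaShimura1999} invoked in the proof of \Cref{prop4.13_UB}. In every case $\gon_k(Y) = 3 > 2$.

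Combining (i) and (ii), \Cref{cor:CS} produces $\gon_k(X_0(N)) \geq 6$ for both $k = \Q$ and $k = \C$, completing the proof. The argument has no genuinely difficult step in isolation; the real input is the trigonality of the twelve quotients $Y$, which is imported from \Cref{prop:trig_quot}, \Cref{prop4.13_UB}, and the cited classification. The only mild subtlety is that the genus inequality is sharp at $N = 116$ and $N = 139$, so it is essential that \Cref{cor:CS} is stated with a non-strict inequality; any stricter form would force separate handling of those two rows.
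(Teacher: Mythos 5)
Your proof is correct and rests on the same key tool as the paper's: \Cref{cor:CS} applied with $d=6$ to the degree-$2$ quotient $\pi\colon X_0(N)\to X_0(N)/w_d$, after checking $g(X_0(N))-2g(Y)\geq 5$ and $\gon_k(Y)>2$. The only real divergence is in how the hypothesis $\gon_k(Y)>2$ is verified. The paper's (one-line) proof imports from \cite{JeonPark05} that $\gon_\C X_0(N)\geq 5$ for these $N$, which forces $\gon_\C(Y)\geq\lceil 5/2\rceil=3$ via \Cref{prop:UB}; you instead establish $\gon_\C(Y)=3$ directly from the trigonality of the quotients already proved in \Cref{prop:trig_quot}, \Cref{prop4.13_UB} and \cite{HasegawaShimura1999} (together with the standard Castelnuovo--Severi fact that a curve of genus $\geq 3$ cannot be simultaneously hyperelliptic and trigonal). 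Both verifications are sound and neither introduces circularity, since the trigonality results you cite precede this proposition and do not depend on it; the paper's route is more self-contained relative to external classification results, while yours makes the logical dependence on the explicitly computed trigonal maps visible and avoids invoking the tetragonal classification of \cite{JeonPark05} at all. Your observation that the genus inequality is sharp at $N=116$ and $N=139$, so that the non-strict form of \Cref{cor:CS} is essential, is accurate and worth having on record.
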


\medskip
\begin{proof}

By \cite{JeonPark05} it follows that $\gon_\C X_0(N)\geq 5$ and then \Cref{cor:CS} gives $\gon_\C X_0(N) \geq 6$. 
\end{proof}

\begin{cor}\label{cor4.18}
The $\Q$-gonality is at least $8$ and the $\C$-gonality is at least $6$ for $N=110,161,173,177,188,199,251,311$.
\end{cor}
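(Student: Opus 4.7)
The approach is to exploit the degree-$2$ quotient map $\pi:X_0(N)\to Y$ (where $Y:=X_0(N)/w_d$ is the genus-$4$ Atkin--Lehner quotient singled out in \Cref{prop4.10}) together with the Castelnuovo--Severi inequality in the form of \Cref{cor:CS}. The essential arithmetic input is that, combining \Cref{prop4.10} (which yields $\gon_\Q Y\leq g(Y)=4$) with \Cref{prop:prosla} (which rules out trigonality over $\Q$), one obtains $\gon_\Q Y=4$ for each $N$ in the list.

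For the $\Q$-gonality lower bound, I would apply \Cref{cor:CS} with $d=8$. The hypothesis $\gon_\Q(Y)>\lfloor 7/2\rfloor=3$ is immediate from the previous paragraph, and the hypothesis $g(X_0(N))-2g(Y)\geq 7$ reduces to the genus inequality $g(X_0(N))\geq 15$, which is a routine case-by-case verification using the standard formula for $g(X_0(N))$; the tightest instances in the list are those for which $g(X_0(N))=15$.

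For the $\C$-gonality lower bound, I would first argue that $Y$ cannot be hyperelliptic over $\C$: were it so, composing $\pi$ with the hyperelliptic double cover of $Y$ would produce a degree-$4$ morphism $X_0(N)\to\PP^1$ over $\C$, contradicting the Jeon--Park classification \cite{JeonPark05} of $\C$-tetragonal modular curves $X_0(N)$, since none of our $N$ appear in that list. Thus $Y$ is non-hyperelliptic of genus $4$ and so trigonal over $\C$ (the only option left for a genus-$4$ curve), giving $\gon_\C Y=3$. Then \Cref{cor:CS} over $\C$ with $d=6$ applies: the numerical hypothesis $g(X_0(N))-2g(Y)\geq 5$ is a fortiori implied by the $\Q$-side check, and $\gon_\C Y=3>2$. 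Conclude $\gon_\C X_0(N)\geq 6$.

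The main obstacle is purely arithmetic bookkeeping: verifying $g(X_0(N))\geq 2g(Y)+7=15$ in each of the eight cases. The step is the tightest one of the argument, while the $\C$-gonality bound follows automatically once $Y$ has been shown to be non-hyperelliptic. No ingredient beyond the Castelnuovo--Severi inequality, the Jeon--Park tetragonal classification, and the determination $\gon_\Q Y=4$ is needed.
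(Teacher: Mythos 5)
Your strategy is the same as the paper's: apply \Cref{cor:CS} to the degree-$2$ quotient map onto the genus-$4$ curve $Y=X_0(N)/w_d$, using \Cref{prop:prosla} to know $\gon_\Q Y>3$ and the Jeon--Park classification to handle the $\C$-side. The $\C$-gonality bound and the $\Q$-gonality bound for seven of the eight values of $N$ go through exactly as you describe.

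However, there is a genuine gap at $N=173$. You assert that $g(X_0(N))\geq 15$ holds in all eight cases, with equality in the tightest ones; in fact $g(X_0(173))=14$, so $g(X_0(173))-2g(Y)=6<7$ and the hypothesis of \Cref{cor:CS} with $d=8$ fails. The Castelnuovo--Severi argument alone only yields $\gon_\Q(X_0(173))\geq 7$, and some additional input is required to exclude a degree-$7$ function. The paper closes this case by an explicit computation showing that $X_0(173)$ has no function of degree $7$ over $\F_3$ (in the spirit of \Cref{prop:fp}); without such a supplementary argument your proof does not establish the claimed bound for $N=173$. A smaller point: your first paragraph deduces $\gon_\Q Y=4$ from ``$\leq 4$'' plus ``not trigonal,'' which by itself still leaves open $\gon_\Q Y=2$; this is repaired only by your later observation that $Y$ is non-hyperelliptic, so that step should logically precede the application of \Cref{cor:CS}.
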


\begin{proof}
Since all the curves have genus $4$ quotients $X_0(N)/w_d$ as in \Cref{prop:prosla} and genus $\geq 14$, applying \Cref{cor:CS} proves $\gon_\C\geq 6$.

For $N\neq173$, since the quotients are not trigonal over $\Q$, we can apply \Cref{cor:CS} to prove $\gon_\Q X_0(N)\geq8$.

For $N=173$, since the quotient is not trigonal over $\Q$, we can apply the Castelnuovo-Severi inequality similarly as in \Cref{cor:CS} and conclude that $\gon_\Q(X_0(173))\geq7$. Now we explicitly compute that there are no degree $7$ functions over $\F_3$ as in \Cref{prop:fp} (the computation takes 26 seconds).
\end{proof}

\begin{prop}\label{prop4.20}
The $\Q$-gonality and $\C$-gonality are at least $8$ for the following $N$, where $Y:=X_0(N)/w_d$:\\
\begin{center}
\begin{tabular}{|c|c|c|c||c|c|c|c|}
\hline
$N$ &  $g(X_0(N))$ & $d$ & $g(Y)$ & $N$ &  $g(X_0(N))$ & $d$ & $g(Y)$\\
  \hline
    $120$ & $17$ & $15$ & $5$ &
    $203$ & $19$ & $203$ & $6$\\
    
    $126$ & $17$ & $63$ & $5$ &
    $205$ & $19$ & $41$ & $6$\\

    $138$ & $21$ & $23$ & $5$ &
    $206$ & $25$ & $206$ & $8$\\

    $156$ & $23$ & $39$ & $6$ &
    $209$ & $19$ & $209$ & $5$\\

    $158$ & $19$ & $79$ & $5$ &
    $213$ & $23$ & $71$ & $5$\\

    $165$ & $21$ & $11$ & $7$ &
    $221$ & $19$ & $221$ & $6$\\
    
    $166$ & $20$ & $83$ & $6$ &
    $263$ & $22$ & $263$ & $5$\\
    
    $168$ & $25$ & $56$ & $9$ &
    $269$ & $22$ & $269$ & $6$\\
    
    $171$ & $17$ & $171$ & $5$ &
    $279$ & $29$ & $279$ & $9$\\
    
    $183$ & $19$ & $183$ & $6$ &
    $284$ & $34$ & $71$ & $7$ \\
    
    $184$ & $21$ & $23$ & $5$ &
    $287$ & $27$ & $287$ & $7$\\
    
    $185$ & $17$ & $185$ & $5$ &
    $299$ & $27$ & $299$ & $6$\\
    
    $190$ & $27$ & $95$ & $6$ &
    $359$ & $30$ & $359$ & $6$\\
        
    $195$ & $25$ & $39$ & $9$ & & & & \\
    
    \hline
\end{tabular}
\end{center}
\end{prop}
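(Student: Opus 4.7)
My plan is to apply Corollary \ref{cor:CS} with $d=8$ to the degree $2$ quotient map $\pi : X_0(N) \to Y := X_0(N)/w_d$, which is defined over $\Q$ since every Atkin-Lehner involution is. To invoke the corollary for both $k = \Q$ and $k = \C$ (so as to obtain the $\C$-gonality lower bound simultaneously), I need two ingredients for each row of the table: the genus inequality $g(X_0(N)) - 2g(Y) \geq 7$, and the gonality inequality $\gon_\C(Y) \geq 4$ (which implies $\gon_\Q(Y) \geq 4$, and which is exactly the hypothesis $\gon_k(Y) > \lfloor (d-1)/2 \rfloor = 3$ of the corollary for $d=8$).

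First I would verify the genus inequality. This is a one-line check from the data in the table: for each entry one computes $g(X_0(N)) - 2g(Y)$ and observes that the result is at least $7$. For instance, $N = 120$ gives $17 - 2\cdot 5 = 7$, $N = 168$ gives $25 - 2\cdot 9 = 7$, $N = 165$ gives $21 - 2\cdot 7 = 7$, and the remaining rows are strictly larger, so the bookkeeping is straightforward.

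Next I would establish $\gon_\C(Y) \geq 4$ for each $Y$ in the table, i.e., that each quotient $Y = X_0(N)/w_d$ is neither hyperelliptic nor trigonal over $\C$. The hyperelliptic Atkin-Lehner quotients of $X_0(N)$ were classified by Furumoto and Hasegawa \cite{FurumotoHasegawa1999}, and the trigonal-over-$\C$ Atkin-Lehner quotients were classified by Hasegawa and Shimura \cite{HasegawaShimura1999}; for each of the pairs $(N,d)$ listed one reads off that $Y$ appears in neither classification. This is the step where the main care is required, since there are many cases and the table of Hasegawa-Shimura must be consulted row by row.

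Once both (i) and (ii) are in place, Corollary \ref{cor:CS} (applied over $k = \Q$ and then over $k = \C$) yields $\gon_\Q(X_0(N)) \geq 8$ and $\gon_\C(X_0(N)) \geq 8$ respectively, completing the proof. The only real obstacle is the classification lookup in step (ii); no new geometric input is needed beyond the cited classifications and the Castelnuovo-Severi machinery already packaged in Corollary \ref{cor:CS}.
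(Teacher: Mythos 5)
Your proposal is correct and follows essentially the same route as the paper: the paper's proof also applies \Cref{cor:CS} with $d=8$ to the degree $2$ quotient map onto $Y=X_0(N)/w_d$, citing \cite{HasegawaShimura1999} for the fact that these quotients are not trigonal over $\C$ (hence not over $\Q$). Your version is if anything slightly more careful, since you explicitly note that one must also rule out $Y$ being hyperelliptic (via \cite{FurumotoHasegawa1999}) to conclude $\gon_\C(Y)\geq 4$, a point the paper leaves implicit.
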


\begin{proof}
Since the quotients $Y=X_0(N)/w_d$ are not trigonal over $\C$ \cite{HasegawaShimura1999}, they are not trigonal over $\Q$ either, and by applying \Cref{cor:CS} the result follows.
\end{proof}

\begin{prop}\label{Qgon10}
    The $\Q$-gonality of $X_0(N)$ for $N=271$ is $10$ and the $\C$-gonality is $8$.
\end{prop}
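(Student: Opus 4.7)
The plan is to route the problem through the Atkin--Lehner quotient $Y := X_0^+(271) = X_0(271)/w_{271}$. Standard formulas give $g(X_0(271)) = 22$. The number of geometric fixed points of $w_{271}$ on $X_0(271)$ equals $h(-271) + h(-4 \cdot 271) = 11 + 11 = 22$ (here $h(-271) = 11$ is classical, and the conductor-$2$ formula gives $h(-1084) = 11$ because $2$ splits in $\Q(\sqrt{-271})$), so Riemann--Hurwitz applied to $\pi : X_0(271) \to Y$ yields $g(Y) = 6$.

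For the $\C$-gonality, \Cref{prop_A.1}(ii) gives $\gon_\C Y \leq \lfloor (6+3)/2 \rfloor = 4$, while $Y$ is not trigonal over $\C$ by \cite{HasegawaShimura1999}; hence $\gon_\C Y = 4$. Composing a $\C$-rational $g_4^1$ on $Y$ with $\pi$ gives $\gon_\C X_0(271) \leq 8$. Applying \Cref{cor:CS} with $d = 8$ is legitimate since $g(X) - 2g(Y) = 10 \geq 7$ and $\gon_\C Y = 4 > 3$, yielding $\gon_\C X_0(271) \geq 8$. Thus $\gon_\C X_0(271) = 8$.

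For the $\Q$-gonality upper bound, I would search Riemann--Roch spaces of $\Q$-rational effective degree-$5$ divisors on $Y$ supported on the unique cusp and low-degree CM points descending from $X_0(271)$ to produce a degree-$5$ function in $\Q(Y)$; pulling back via $\pi$ gives a degree-$10$ $\Q$-rational function on $X_0(271)$ by \Cref{prop:UB}. For the lower bound, suppose $f : X_0(271) \to \PP^1$ is $\Q$-rational of degree $d \leq 9$. Applying Castelnuovo--Severi to $f$ and $\pi$: if they share no common degree-${>}1$ factor, then $22 = g(X) \leq 2g(Y) + d - 1 = 11 + d$, forcing $d \geq 11$, a contradiction. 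Hence $f$ factors through $\pi$, which forces $d$ even and yields a $\Q$-rational map $Y \to \PP^1$ of degree $d/2 \in \{3,4\}$. Degree $3$ is excluded by $\gon_\C Y = 4$, so only $d = 8$ survives, and it suffices to prove $\gon_\Q Y \geq 5$, i.e.\ that $W_4^1(Y)(\Q) = \emptyset$.

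Because the Brill--Noether number $\rho(6,1,4) = 6 - 2 \cdot 3 = 0$, $W_4^1(Y) \subset \Pic^4 Y$ is a finite $\Q$-scheme, and the final step is a Mordell--Weil sieve on $W_4^1(Y)$ along the lines of \Cref{sec:MW}: (i) compute a finite-index subgroup of $\Jac(Y)(\Q)$ by descent on the plus-eigenspace of $J_0(271)$ under $w_{271}$, ideally by isolating a small-rank isogeny factor; (ii) for several small primes $p$ of good reduction, enumerate $W_4^1(Y/\F_p)$ and the image of $\Jac(Y)(\Q)$ in $\Jac(Y)(\F_p)$ after identifying $\Pic^4 Y \cong \Jac(Y)$ via a fixed $\Q$-rational degree-$4$ divisor; (iii) verify for at least one $p$ that these two subsets are disjoint in $\Jac(Y)(\F_p)$. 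Combined with \Cref{cor:CS} applied with $d = 10$, this gives $\gon_\Q X_0(271) \geq 10$. The main obstacle is precisely this sieve: since $Y$ is tetragonal over $\overline{\Q}$, no single reduction modulo $p$ can rule out $\gon_\Q Y = 4$, so the combined Brill--Noether/Mordell--Weil strategy is essential, and the computational bottleneck is producing explicit generators of (a finite-index subgroup of) $\Jac(Y)(\Q)$.
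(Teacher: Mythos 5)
Your overall architecture is the same as the paper's: pass to $Y:=X_0^+(271)$ of genus $6$, establish $\gon_\C Y=4$ (not trigonal by \cite{HasegawaShimura1999}, tetragonal by \Cref{prop_A.1}(ii)), deduce $\gon_\C X_0(271)=8$ by Castelnuovo--Severi plus pullback, and reduce the $\Q$-statement to showing $\gon_\Q Y=5$. The upper bound $\gon_\Q X_0(271)\le 10$ via an explicit degree-$5$ function in $\Q(Y)$ is also what the paper does (it finds one with \texttt{Genus6GonalMap}). Up to this point the proposal is correct.

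The genuine problem is your final step. Your claim that ``since $Y$ is tetragonal over $\overline{\Q}$, no single reduction modulo $p$ can rule out $\gon_\Q Y=4$'' is false, and it leads you to replace a short computation with a much harder (and likely infeasible) one. Geometric tetragonality only forces $\gon_{\overline{\F}_p}Y\le 4$; the degree-$4$ pencil is a priori defined over some number field and need not reduce to a pencil defined over the prime field $\F_p$. Since $\gon_{\F_p}Y\le\gon_\Q Y$ for a prime of good reduction, it suffices to verify that $Y$ has no degree-$4$ function over $\F_3$ --- a finite Riemann--Roch search exactly as in \Cref{prop:prosla} and \Cref{prop:fp} --- and this is precisely what the paper does. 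By contrast, your proposed Mordell--Weil sieve on $W_4^1(Y)$ requires explicit generators of a finite-index subgroup of $\Jac(Y)(\Q)=J_0^+(271)(\Q)$; unlike the minus-quotients used in \Cref{sec:MW}, which have analytic rank $0$, the plus part has no reason to have rank $0$, so step (i) of your sieve is a serious obstruction rather than a routine computation. The conclusion you want is correct, but as written the key inequality $\gon_\Q Y\ge 5$ is not actually established by your argument, while the direct $\F_3$-gonality computation settles it immediately.
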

\begin{proof}
    The quotient $X_0^+(271)$ is of genus $6$ and pentagonal over $\Q$ (we found a function of degree $5$ using the \texttt{Magma} function Genus6GonalMap(C)). Further, the quotient is not tetragonal over $\Q$ since it is not tetragonal over $\F_3$, but it is tetragonal over $\C$ because of \Cref{prop_A.1}. It now follows from \Cref{cor:CS} that there are no functions $f:X_0(271)\to\mathbb{P}^1$ of degree $\leq7$ defined over $\C$ and that there are no functions $f:X_0(271)\to\mathbb{P}^1$ of degree $\leq9$ defined over $\Q$.
\end{proof}

\begin{prop}\label{Cgon6cs}
The $\C$-gonality for is bounded from below for the following values of $N$, where $Y:=X_0(N)/w_d$: 
\begin{center}
\begin{tabular}{|c|c|c|c|c||c|c|c|c|c|}
\hline
$N$ &  $g(X_0(N))$ & $d$ & $g(Y)$ & $\textup{gon}_\C\geq$ & $N$ &  $g(X_0(N))$ & $d$ & $g(Y)$ & $\textup{gon}_\C\geq$\\
\hline
    $102$ & $15$ & $51$ & $5$ & $6$ &
    $202$ & $24$ & $101$ & $9$ & $7$ \\

    $129$ & $13$ & $129$ & $4$ & $6$ &
    $204$ & $31$ & $68$ & $12$ & $8$ \\

    $150$ & $19$ & $75$ & $7$ & $6$ &
    $210$ & $41$ & $35$ & $15$ & $8$ \\

    $152$ & $17$ & $152$ & $6$ & $6$ &
    $211$ & $17$ & $211$ & $6$ & $6$ \\

    $155$ & $15$ & $155$ & $4$ & $6$ &
    $214$ & $26$ & $107$ & $9$ & $8$ \\

    $159$ & $17$ & $159$ & $4$ & $6$ &
    $219$ & $23$ & $219$ & $8$ & $7$ \\

    $174$ & $27$ & $87$ & $8$ & $8$ &
    $223$ & $18$ & $223$ & $6$ & $7$ \\

    $175$ & $15$ & $175$ & $5$ & $6$ &
    $257$ & $21$ & $257$ & $7$ & $7$ \\

    $186$ & $29$ & $62$ & $11$ & $8$ &
    $281$ & $23$ & $281$ & $7$ & $8$ \\

    $194$ & $23$ & $97$ & $7$ & $8$ &
    $293$ & $24$ & $293$ & $8$ & $8$ \\

\hline
\end{tabular}
\end{center}
\end{prop}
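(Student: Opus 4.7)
The plan is to apply the Castelnuovo-Severi corollary \Cref{cor:CS} over $\C$ to the degree $2$ quotient map $\pi: X_0(N) \to Y = X_0(N)/w_d$ for each entry in the table, mirroring the strategy used in \Cref{prop4.13} and \Cref{prop4.20} but working over $\C$ rather than $\Q$. For each row, set $b$ to be the claimed lower bound for $\gon_\C X_0(N)$; it suffices to verify the two hypotheses of \Cref{cor:CS}:
\begin{enumerate}
\item[(i)] the numerical inequality $g(X_0(N)) - 2g(Y) \geq b-1$;
\item[(ii)] the lower bound $\gon_\C(Y) > \lfloor (b-1)/2 \rfloor$.
\end{enumerate}
Condition (i) is a direct arithmetic check against the columns of the table; one verifies for example that in the rows with $b=6$ we have $g(X_0(N)) - 2g(Y) \geq 5$ (e.g.\ $15-2\cdot 5 = 5$ for $N=102$, $13-2\cdot 4 = 5$ for $N=129$), in the rows with $b=7$ we have $g(X_0(N)) - 2g(Y) \geq 6$, and in the rows with $b=8$ we have $g(X_0(N)) - 2g(Y) \geq 7$.

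Condition (ii) splits according to the value of $b$. When $b = 6$ we need $\gon_\C(Y) \geq 3$, i.e. $Y$ must not be hyperelliptic over $\C$; when $b \in \{7,8\}$ we need $\gon_\C(Y) \geq 4$, i.e. $Y$ must not be trigonal over $\C$. For the rows with target $b=6$ (namely $N \in \{102,129,150,152,155,159,175,211\}$), I would invoke the classification of hyperelliptic Atkin--Lehner quotients of $X_0(N)$, e.g. \cite{FurumotoHasegawa1999} together with Ogg's original hyperelliptic classification, to rule out each $Y$ being hyperelliptic. For the rows with target $b \in \{7,8\}$, I would invoke \cite{HasegawaShimura1999}, which classifies all trigonal quotients $X_0(N)/w_d$ over $\C$, and verify that none of the pairs $(N,d)$ in question appear on that list.

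Once both conditions are in hand for a given row, \Cref{cor:CS} produces a function of degree $< b$ on $X_0(N)$ only if it factors through $\pi$, in which case it would descend to a function on $Y$ of degree $< b/2$, contradicting (ii); otherwise the Castelnuovo--Severi inequality $g(X_0(N)) \leq 2g(Y) + (b-2)$ contradicts (i). The main obstacle is the bookkeeping of (ii): for each $(N,d)$ in the table one must confirm that $Y = X_0(N)/w_d$ is not hyperelliptic (resp.\ trigonal) over $\C$. For some of the larger $N$ it is possible the listed references do not explicitly cover the particular quotient; in those cases I would fall back on computing $\gon_{\F_p}(Y)$ for a prime of good reduction and using $\gon_{\F_p}(Y) \leq \gon_\C(Y)$, as done in \Cref{prop:fp} and \Cref{prop:prosla}.
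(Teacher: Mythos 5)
Your approach is essentially the paper's: its proof of \Cref{Cgon6cs} consists precisely of applying the Castelnuovo--Severi argument of \Cref{cor:CS} over $\C$ to the degree-$2$ quotient map onto $Y = X_0(N)/w_d$, with the required lower bounds on $\gon_\C(Y)$ supplied by the known classifications of hyperelliptic and trigonal Atkin--Lehner quotients; your arithmetic verification of condition (i) row by row is correct. Two small corrections to your write-up. First, for the rows with target $b \in \{7,8\}$ the condition $\gon_\C(Y) \geq 4$ means $Y$ is neither hyperelliptic nor trigonal over $\C$, so you need both \cite{FurumotoHasegawa1999} and \cite{HasegawaShimura1999} there, not only the trigonal classification. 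Second, your proposed fallback is not valid as stated: the inequality $\gon_{\F_p}(Y) \leq \gon_\C(Y)$ is false in general, since reduction modulo $p$ only gives $\gon_{\F_p}(Y) \leq \gon_\Q(Y)$ and $\gon_{\overline{\F}_p}(Y) \leq \gon_\C(Y)$; a degree-$d$ map defined over $\overline{\F}_p$ but not over $\F_p$ is exactly the phenomenon the paper flags in \Cref{sec:future work}, and it is why \Cref{prop:prosla} concludes non-trigonality only over $\Q$ while \Cref{cor4.18} gets a weaker $\C$-gonality bound than $\Q$-gonality bound. Since the fallback is never actually invoked for the listed $N$, the main argument stands.
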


\begin{proof}
    Using the degree $2$ maps to $X_0(N)/ w_d $, we can apply the Castelnuovo-Severi inequality similarly as in \Cref{cor:CS} and get the lower bound for $\gon_\C(X_0(N))$.
\end{proof}

\subsection{Lower bounds by Green's conjecture}

Finally, we prove the lower bound for $\gon_\C X_0(N)$ where we can. We first recall the following result for completeness.

\begin{prop}\label{prop:gon_ge_6}
The $\C$-gonality is at least $6$ for 
\begin{align*}N=&114,132,134,135,140,145,150,151,152,160,165,166,168,170,171,172,\\
& 174,175,176,178,182,183,185,186,189,192,194,195,196 \textup{ and } N \geq 198.\end{align*}
\end{prop}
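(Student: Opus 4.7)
The plan is to invoke \Cref{nog51}: for each $N$ in the list, I aim to verify that $X_0(N)$ has genus $g \geq 6$ and that the graded Betti number $\beta_{3,2}$ of its canonical embedding vanishes. Once this holds, \Cref{nog51} rules out the existence of a $g_5^1$ on $X_0(N)$. Since a $g_d^1$ with $d \leq 5$ would yield a $g_5^1$ by adjoining $5-d$ base points (a $2$-dimensional subspace of $L(D)$ gives a $2$-dimensional subspace of $L(D+E)$ for any effective $E$), this suffices to conclude $\gon_\C X_0(N) \geq 6$.

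Concretely, for each $N$ I would proceed as follows. First, check that $g(X_0(N)) \geq 6$ via the standard genus formula. Second, construct a canonical model $X_0(N) \hookrightarrow \PP^{g-1}$ in \texttt{Magma}, for instance by using a basis of weight-$2$ cusp forms to compute the image of the canonical map (this is already needed elsewhere in the paper for models of $X_0(N)$ and its Atkin--Lehner quotients). Third, form the graded ideal $I_X$ of this model and compute the minimal free resolution of the coordinate ring $S_X = S/I_X$. Fourth, read off the graded Betti table of the resolution and confirm that $\beta_{3,2} = 0$ in every case.

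The main obstacle is computational: as $N$ grows, $g(X_0(N))$ grows and so does the ambient projective space, and minimal free resolutions are computed by Gr\"obner-basis techniques which scale poorly in both time and memory. For the larger $N$ in the list, particularly the tail $N \geq 198$ where the canonical curve already sits in a high-dimensional projective space, one may need to choose the canonical model carefully so that the defining quadrics are as sparse as possible. A useful fallback is to carry out the resolution computation over a prime field $\F_p$ of good reduction: by upper semicontinuity of graded Betti numbers in flat families, a vanishing of $\beta_{3,2}$ over $\F_p$ implies its vanishing in characteristic $0$, which is all that is needed to apply \Cref{nog51}. The verification for each individual $N$ would be recorded in the accompanying \texttt{Magma} code.
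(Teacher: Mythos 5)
Your proposed method cannot prove the statement as written, because the statement covers the infinite family ``$N \geq 198$'': no finite list of Betti-table computations, whether over $\Q$ or over $\F_p$, can address infinitely many curves. This is the essential gap. The paper's own proof is a one-line citation to \cite[Proposition 4.4]{HasegawaShimura_trig}, and the reason that result can reach all $N\geq 198$ is that for large $N$ it does not argue curve-by-curve at all: it uses Ogg's lower bound $L_p(N)=\frac{p-1}{12}\psi(N)+2^{\omega(N)}\leq \#X_0(N)(\F_{p^2})$ (the supersingular points are all $\F_{p^2}$-rational), combined with the fact that a curve of gonality $\leq 5$ over $\overline{\F}_p$ has at most $5(p^2+1)$ points over $\F_{p^2}$. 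Since $\psi(N)\geq N$ grows linearly, this kills all sufficiently large $N$ uniformly, and only finitely many residual cases need individual treatment. Some such asymptotic input is unavoidable here; your plan has no mechanism to supply it.

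Even restricted to the finite part of the list, the plan is doubtful in practice. Several of these curves have genus in the high teens and twenties ($N=190$ has genus $27$, $N=195$ genus $25$, $N=196$ genus $17$, \dots), so you would be computing minimal free resolutions of canonical curves in $\PP^{26}$; the paper reports that the analogous computation already takes $1.5$--$3.5$ hours for single curves of genus $10$ ($N=86,127$ in \Cref{Cgon6}), and the cost grows very quickly with $g$. That said, the ingredients you cite are sound where they apply: \Cref{nog51} correctly rules out a $g_5^1$ when $g\geq 6$ and $\beta_{3,2}=0$, the base-point argument reducing $d\leq 5$ to $d=5$ is fine, and the semicontinuity trick (vanishing of $\beta_{3,2}$ in characteristic $p$ implies vanishing in characteristic $0$) is legitimate. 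Indeed this is exactly the method the paper does use for the moderate-genus cases in \Cref{prop4.24} and \Cref{Cgon6}; it is simply not the right tool for the present proposition, which needs the point-counting argument to handle the infinite tail.
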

\begin{proof}
This is \cite[Proposition 4.4]{HasegawaShimura_trig}.
\end{proof}

\begin{prop}\label{prop4.24}
The $\C$-gonality of $X_0(90)$ is at least $6$.
\end{prop}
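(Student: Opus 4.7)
The plan is to apply the Green--Lazarsfeld direction of Green's conjecture, in the form packaged as \Cref{nog51}. A standard genus computation gives $g(X_0(90)) = 11$, so in particular $g \geq 6$, and the hypothesis of \Cref{nog51} on the genus is met. On the classical side, $X_0(90)$ is non-hyperelliptic by \cite{Ogg74}, non-trigonal over $\C$ by \cite{HasegawaShimura_trig}, and non-tetragonal over $\C$ by \cite{JeonPark05}, so we already have $\gon_\C X_0(90) \geq 5$. What remains is to rule out the existence of a $g_5^1$ on $X_0(90)$.

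To do this, I would compute a canonical model $X_0(90) \hookrightarrow \PP^{10}$ in \texttt{Magma}, using the image of the basis of $S_2(\Gamma_0(90))$ via the code referenced in the introduction. From the homogeneous ideal $I_X \subset S = \C[X_0,\ldots,X_{10}]$ one then computes the minimal graded free resolution of $S_X = S/I_X$ and reads off its graded Betti table. The single number needed to finish the argument is $\beta_{3,2}$: if $\beta_{3,2} = 0$, then \Cref{nog51} directly yields that $X_0(90)$ has no $g_5^1$, and combined with the previous paragraph this gives $\gon_\C X_0(90) \geq 6$.

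The main obstacle here is purely computational. The canonical ideal in $\PP^{10}$ is fairly large, and computing the minimal free resolution out to homological degree $3$ requires a substantial Gr\"obner basis / syzygy calculation. That said, genus $11$ is comfortably within the range of examples the authors handle elsewhere in the paper via Betti-number computations, so \texttt{Magma}'s \texttt{MinimalFreeResolution} applied to the ideal of the canonical model is expected to terminate and return $\beta_{3,2} = 0$, completing the proof. As a sanity check one may also verify the $\beta_{p,1}$ row against the Clifford-index bounds in \eqref{clifford_index_bounds}; a Castelnuovo--Severi attempt via an Atkin--Lehner quotient $X_0(90)/w_d$ looks unlikely to succeed, since none of the allowed quotients appears to satisfy both $g(Y) \leq 3$ and non-hyperellipticity simultaneously, which is precisely why the Betti-number route is the natural one.
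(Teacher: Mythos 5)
Your proposal follows essentially the same route as the paper: establish $\gon_\C X_0(90)\geq 5$ from \cite{JeonPark05}, then rule out a $g_5^1$ via $\beta_{3,2}=0$ and \Cref{nog51}. The only difference is that the paper reads $\beta_{3,2}=0$ off \cite[Table 1]{JeonKimPark06} rather than recomputing the minimal free resolution in \texttt{Magma}, though your direct computation is exactly what the authors do themselves for $N=86,127$ in \Cref{Cgon6}, so it is a perfectly viable substitute.
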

\begin{proof}
Let $X:=X_0(90)$. We first note that the degree $3$ map to $X_0(30)$ gives an upper bound on the $\Q$-gonality by \Cref{prop:UB}. From \cite[Theorem 0.1]{JeonPark05}, it follows that $\gon_\C X>4$. By \cite[Table 1]{JeonKimPark06}, we see that $\beta_{3,2}=0$, and hence we conclude by \Cref{nog51} that $X$ has no $g_5^1.$ It follows that $\gon_\C X \geq 6$ and hence $\gon_\C X=\gon_\Q X =6.$
\end{proof}

\begin{prop} \label{Cgon6}
The $\C$-gonality for $N=84,86,93,106,115,127,128,133,137$ is greater or equal to $6$. 
\end{prop}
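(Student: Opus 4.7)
The plan is to imitate the argument used in \Cref{prop4.24} for $N=90$, applying it to each of the listed values of $N$. The strategy relies on the proven ``if'' direction of Green's conjecture as packaged in \Cref{nog51}: a curve of genus $g \geq 6$ whose graded Betti number $\beta_{3,2}$ (in its canonical embedding) vanishes cannot carry a $g_5^1$. Since each of the listed curves $X_0(N)$ has genus $\geq 6$ and $\C$-gonality at least $5$ by \cite{JeonPark05} (none of them appear on the tetragonal list), verifying $\beta_{3,2}=0$ will force $\gon_\C X_0(N) \geq 6$.

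First I would separate the task into two steps. Step one is to confirm, for each $N \in \{84,86,93,106,115,127,128,133,137\}$, that $g(X_0(N)) \geq 6$ and that $X_0(N)$ is not tetragonal over $\C$; both facts are immediate from standard genus formulas and the classification in \cite{JeonPark05}. Step two is the computation of $\beta_{3,2}$. The most efficient route is to cite the tabulated Betti numbers of $X_0(N)$ from \cite[Table 1]{JeonKimPark06} (used already in \Cref{prop4.24}) and read off $\beta_{3,2}=0$ for each of these $N$. If a value of $N$ is not covered by that table, one falls back on a direct \texttt{Magma} computation: construct the canonical model of $X_0(N)$, extract the graded minimal free resolution of its homogeneous coordinate ring, and confirm $\beta_{3,2}=0$. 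Either way, \Cref{nog51} finishes the argument.

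The main obstacle I anticipate is the computation or verification of $\beta_{3,2}$ when the canonical model is not already available in convenient form, since computing minimal free resolutions for curves of moderately large genus can be memory-intensive and slow. In particular, for the $N$ of largest genus in the list, one may need to be careful to use an efficient presentation (for instance, coming from an $\eta$-product basis of cusp forms) to keep the Groebner basis computation feasible. Once $\beta_{3,2}=0$ has been checked in each case, the chain $\gon_\C X_0(N) > 4$ from \cite{JeonPark05} together with the absence of a $g_5^1$ from \Cref{nog51} yields $\gon_\C X_0(N) \geq 6$, as required.
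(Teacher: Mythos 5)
Your proposal matches the paper's proof essentially verbatim: the paper also combines the lower bound $\gon_\C X_0(N)\geq 5$ from \cite{JeonPark05} with the vanishing of $\beta_{3,2}$, read off from \cite[Table 1]{JeonKimPark06} for all listed $N$ except $86$ and $127$, and computed directly in \texttt{Magma} for those two (taking 3.5 and 1.5 hours respectively), then concludes via \Cref{nog51}. Your anticipated fallback to a direct Betti-number computation is exactly what the authors do for the two cases missing from the table.
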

\begin{proof}
By \cite{JeonPark05}, $\gon_\C X_0(N)\geq 5$. By \cite[Table 1]{JeonKimPark06}, we see that $\beta_{3,2}=0$ for all $N \neq 86,127$, while for $N=86, 127$ we compute $\beta_{2,2}=\beta_{3,2}=0$ in \texttt{Magma} (the computations take 3.5 and 1.5 hours, respectively). We conclude by \Cref{nog51} that $X_0(N)$ has no $g_5^1,$ hence $\gon_\C X_0(N) \geq 6$.
\end{proof}

\section{Mordell-Weil sieving on Brill-Noether varieties}
\label{sec:MW}
The only cases that remain unsolved for $N<145$ are $N=97$ and $133$ for which we have $5 \leq \gon_\Q X_0(97)\leq 6$ and $\gon_\Q X_0(133)\leq 8$. In this section we show that the upper bound is correct in both cases and also prove that $7 \leq \gon_\Q X_0(145)$.

For a curve $X$, denote by $W_d^r(X)$ the closed subvariety of $\Pic^d(X)$ classifying divisor classes of effective divisors $D$ of degree $d$ which are contained in linear systems of dimension $\geq r$, or equivalently, for which $\ell(D)\geq r+1$. Obviously a curve $X$ with $X(\Q)\neq \emptyset$ has a function of degree $d$ over $\Q$ if and only if $W_d^1(X)(\Q)\neq \emptyset$. Let $X:=X_0(N)$ and $\mu:\Pic^d X \rightarrow J_0(N)$ the map defined by $\mu(D):=D-w(D)$. We obviously have that the $\mu(J_0(N))\subset J_0(N)^-$. For $N=97$ and $133$, we compute that $J_0(N)^-(\Q)$ is of rank 0 by computing that its analytic rank is 0 (see e.g. \cite[Section 3]{Deg3Class}).

Suppose $D\in W_d^1(X)(\Q)$ and $p>2$ is a prime of good reduction for $X$. We have the following commutative diagram
$$
\xymatrix{
& W_d^1(X)(\Q) \ar[r]^{\mu} \ar[d]^{\red_p} & J_0(N)^-(\Q) \ar[d]^{\red_p} \\
& W_d^1(X)(\F_p) \ar[r]^{\mu} & J_0(N)^-(\F_p)}, 
$$
where the vertical maps are reduction modulo $p$. Suppose now that there exists a $D\in W_d^1(X)(\Q)$. Then $\mu(D)$ lies in $\red_p^{-1}(\mu(W_d^1(X)(\F_p)))$. The set $W_d^1(X)(\F_p)$ can be computed in practice by simply finding all the effective degree $d$ divisors whose Riemann-Roch spaces have dimension $\geq 2$. Note that in our cases $J_0(N)^-(\Q)$ is a torsion group and $\red_p$ is injective on the torsion of $J_0(N)(\Q)$ \cite[Appendix]{katz81}. The same procedure can be applied for a set $S$ of multiple primes $p>2$ of good reduction, in which case we get 
$$\mu(D)\in \bigcap _{p\in S}\red_p^{-1}(\mu(W_d^1(X)(\F_p))).$$
If $$\bigcap _{p\in S}\red_p^{-1}(\mu(W_d^1(X)(\F_p)))=\emptyset$$ it follows that $W_d^1(X)(\Q)=\emptyset$ and indeed this is what we will show. In our cases it will be enough to take $S$ consisting of a single prime.
\begin{prop} \label{prop:97_133} The $\Q$-gonality of $X_0(97)$ is $6$ and the $\Q$-gonality of $X_0(133)$ is $8$. The $\Q$-gonality of $X_0(145)$ is $\geq7$.
\end{prop}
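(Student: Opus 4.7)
The plan is to apply the Mordell--Weil sieve on the Brill--Noether loci $W_d^1(X_0(N))$ exactly as set up just before the statement. In each of the three cases I first identify the degree $d$ for which I need to show $W_d^1(X_0(N))(\Q)=\emptyset$. For $N=97$, \Cref{prop4.10} gives $\gon_\Q X_0(97)\leq 6$ and \cite{JeonPark05} gives $\gon_\C X_0(97)\geq 5$, so it remains to show $W_5^1(X_0(97))(\Q)=\emptyset$. For $N=133$, \Cref{prop4.10} provides the upper bound $\leq 8$ and \Cref{Cgon6} gives $\gon_\C X_0(133)\geq 6$; since $X_0(133)$ has rational cusps, any degree-$6$ function over $\Q$ would produce a degree-$7$ function over $\Q$ by adding a rational point, so ruling out $W_7^1(X_0(133))(\Q)$ alone suffices to conclude $\gon_\Q=8$. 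For $N=145$, $\gon_\C X_0(145)\geq 6$ by \Cref{prop:gon_ge_6}, so it is enough to prove $W_6^1(X_0(145))(\Q)=\emptyset$.

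In each case I would pick a prime $p>2$ of good reduction for $X_0(N)$ and then enumerate all effective $\F_p$-rational divisors of degree $d$, keeping those for which $\ell(D)\geq 2$; this produces the finite set $W_d^1(X_0(N))(\F_p)$. Applying $\mu(D)=D-w_N(D)$ maps it into $J_0(N)^-(\F_p)$. On the arithmetic side, $J_0(N)^-(\Q)$ is finite in each case---this is already stated for $N=97,133$, while for $N=145$ I would first verify it by computing that the analytic rank of $J_0(145)^-$ is zero along the lines of \cite[Section 3]{Deg3Class}---and $\red_p$ is injective on torsion by \cite[Appendix]{katz81}, so its image in $J_0(N)^-(\F_p)$ can be listed explicitly. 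If $\mu(W_d^1(X_0(N))(\F_p))$ and $\red_p(J_0(N)^-(\Q))$ are disjoint, the commutative diagram forces $W_d^1(X_0(N))(\Q)=\emptyset$. If a single prime does not close the sieve, I would intersect the contributions from several primes as the final bullet in the section setup suggests.

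The principal obstacle I anticipate is computational: enumeration of effective degree-$d$ divisors over $\F_p$ and the Riemann--Roch dimension tests scale poorly in $d$, $p$ and $g(X_0(N))$, which are $7$, $11$ and $13$ respectively for the three curves. The choice of $p$ is a delicate balance---small enough that $\#X_0(N)(\F_p)$ keeps the enumeration feasible, yet large enough that $\red_p$ is injective on $J_0(N)^-(\Q)$ and that the image $\mu(W_d^1(X_0(N))(\F_p))$ does not accidentally swallow up $\red_p(J_0(N)^-(\Q))$. A secondary difficulty specific to $N=145$ is the explicit determination of $J_0(145)^-(\Q)$ as a finite abelian group, but this is routine given the decomposition of $J_0(145)^-$ into isogeny factors available in the LMFDB.
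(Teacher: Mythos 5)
Your overall strategy is the same as the paper's: a Mordell--Weil sieve on $W_d^1(X_0(N))$ with the correct target degrees ($d=5$ for $97$, $d=7$ for $133$ after the observation that a rational cusp promotes a degree-$6$ map to a point of $W_7^1$, and $d=6$ for $145$), and your sieve logic (disjointness of $\mu(W_d^1(X)(\F_p))$ from $\red_p(J_0(N)^-(\Q))$, using injectivity of $\red_p$ on torsion) is exactly the paper's diagram chase. For $N=97$ this is complete once you invoke Mazur's theorem that $J_0(97)^-(\Q)\simeq\Z/8\Z$ is generated by $[0-\infty]$, so the image of the rational torsion under $\red_p$ is genuinely listable.

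The gap is in your treatment of $J_0(N)^-(\Q)$ for $N=133$ and $145$. You assert that pinning down this finite group is ``routine,'' but the paper explicitly could not compute it exactly in either case, and without knowing the group (or a computable superset of its image in $J_0(N)^-(\F_p)$) the sieve as you describe it cannot be run: you would not know which classes of $J_0(N)^-(\F_p)$ to test for membership in $\mu(W_d^1(X)(\F_p))$. The paper's workaround is the missing idea: bound $J_0(N)(\Q)_{\tors}$ from above inside a finite group $G$ (via reductions, following \cite[Section 4]{Deg3Class}), exhibit an explicit cuspidal subgroup $T=\diam{A,B}$ of index dividing $2$ in $G$, conclude that $2x\in T$ for every rational torsion class $x$, and then sieve with the map $2\mu$ instead of $\mu$, comparing against the known finite group $T$. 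Separately, for $N=133$ the enumeration of $W_7^1(X_0(133))(\F_3)$ is only feasible because $\#X_0(133)(\F_3)=8$ lets one restrict, by the pigeonhole argument of \Cref{lem:F_p_div_search}, to divisors supported on at most two $\F_3$-rational points; you flag the computational cost but do not supply this reduction. With these two ingredients added, your argument coincides with the paper's proof.
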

\begin{proof}
By \cite[Theorem 4]{mazur77} we know that $J_0(97)^-(\Q)\simeq \Z/8\Z$ and is generated by $D_0=[0-\infty]$, where $0$ and $\infty$ are the two cusps of $X_0(97)$. We compute 
$$\red_3^{-1}(\mu(W_5^1(X_0(97)(\F_3))))=\{0\}, \quad \red_5^{-1}(\mu(W_5^1(X_0(97)(\F_5))))=\{D_0, 7D_0\},$$ 
$$\quad \red_7^{-1}(\mu(W_5^1(X_0(97)(\F_7))))=\emptyset.$$
Hence sieving with either $\{3,5\}$ or just the prime $7$ proves that $W_5^1(X_0(97))(\Q)=\emptyset$. Hence it follows that $X_0(97)$ is of gonality $6$ over $\Q$.

The cases of $X_0(133)$ and $X_0(145)$ are more involved than $X_0(97)$ because we cannot compute the torsion group exactly. The rank of $J_0(N)^-(\Q)$ is 0 in both cases, so $J_0(N)^-(\Q)$ is contained in $J_0(N)(\Q)_{\tors}$.

First we solve the case $N=133$. Using the methods of \cite[Section 4]{Deg3Class}, we obtain that $J_0(N)(\Q)_{\tors}$ is isomorphic to a subgroup of $\Z/6\Z \times \Z/ 360 \Z$. We find cuspidal divisors (i.e. divisors supported on cusps of $X_0(N)$) $A,B$ which generate a subgroup $T:=\diam{A,B} \simeq \Z/6 \Z \times \Z/ 180 \Z$. Thus it follows that for any $x\in J_0(N)^-(\Q)$, we have $2x \in T$. Hence we use the map $2\mu$, sending a divisor $D$ to $2(D-w(D))$ instead of $\mu$ (which we used for $X_0(97)$). For sieving, we will just need to use the prime 3. We observe that $\#X_0(133)(\F_3)=8$, so if there exists a function of degree $7$ on $X_0(133)$ over $\Q$, then there has to exist a function of degree $7$ over $\Q$ whose reduction modulo $3$ has a polar divisor that is supported on at most 2 $\F_3$-rational points, using the same arguments as in \Cref{lem:F_p_div_search}. Thus we need only search effective divisors supported on at most 2 $\F_3$-rational points; if $R\subset W_7^1(X_0(133))(\F_3)$ is the set of all divisors classes in $W_7^1(X_0(133))(\F_3)$ represented by divisors supported on at most 2 $\F_3$-rational points, then 
$$\red_3^{-1}(2\cdot\mu(R))=\emptyset \quad \text{ implies }\quad \red_3^{-1}(2\cdot\mu(W_7^1(X_0(133))(\F_3)))=\emptyset.$$
This is exactly what we obtain, proving the result. 

Now we solve the case $N=145$. Using the methods of \cite[Section 4]{Deg3Class} again, we obtain that $J_0(N)(\Q)_{\tors}$ is isomorphic to a subgroup of $\Z/2\Z \times\Z/2\Z \times\Z/2\Z \times\Z/14\Z \times \Z/ 140 \Z$. We find cuspidal divisors $A,B$ which generate a subgroup $T:=\diam{A,B} \simeq \Z/14 \Z \times \Z/ 140 \Z$. Hence we use the map $2\mu$ as in the case $N=133$. For sieving, we will again just need to use the prime 3. Using the same techniques as for the $N=133$ we obtain 
$$\red_3^{-1}(2\cdot\mu(R))=\emptyset \quad \text{ implies }\quad \red_3^{-1}(2\cdot\mu(W_6^1(X_0(145))(\F_3)))=\emptyset$$
which proves that the $\Q$-gonality of $X_0(145)$ is $\geq7$ as desired.
\end{proof}

For $N=97$, the program described in \Cref{prop:97_133} terminates after $7.6$ minutes, for $N=133$ after $6.3$ hours, and for $N=145$ after $1.6$ minutes.

\section{Proofs of the Main Results}
First observe that \Cref{main_tm} follows from the fact that the upper and lower bounds populating the tables agree. We now prove Theorems \ref{thm:tetragonal}, \ref{thm:pentagonal} and \ref{thm:hexagonal} in separate subsections. Before proceeding with the proofs, recall that Ogg \cite{Ogg74} determined the hyperelliptic curves $X_0(N)$ and Hasegawa and Shimura \cite{HasegawaShimura_trig} determined all $X_0(N)$ that are trigonal over $\Q$.
\subsection{Tetragonal curves}
\begin{proof}[Proof of \Cref{thm:tetragonal}]
By \cite[Prosposition 4.4]{HasegawaShimura_trig} the $\C$-gonality (and therefore the $\Q$-gonality) of $X_0(N)$ is $\geq5$ for $N\geq 192$. Hence, we only need to consider the $N\leq 191$ such that $X_0(N)$ is not of gonality $\leq 3$ over $\Q$ and such that the gonality over $\C$ is not $\geq 5$. For these values, the results follow from \Cref{main_tm}.
\end{proof}
\subsection{Pentagonal curves}

\begin{proof}[Proof of \Cref{thm:pentagonal}]
In \Cref{thm:tetragonal} we determined all the cuvres $X_0(N)$ that are tetragonal over $\Q$. \Cref{prop4.14} and \Cref{prop:fp} tell us that for $N=109$ the curve is pentagonal over $\Q$. 

Hasegawa and Shimura \cite[Proposition 4.4]{HasegawaShimura_trig} have proved that the $\C$-gonality (and therefore the $\Q$-gonality) is $\geq6$ for $N\geq 198$. In \Cref{main_tm} we proved that for the remaining (i.e. those not of $\Q$-gonality $\leq 4$) $X_0(N)$ with $N\leq 197,$ the $\Q$-gonality is $\geq6$. Hence it follows that for $N\neq 109$ the $\Q$-gonality is either smaller or larger than $5$, proving the result. 
\end{proof}

\subsection{Hexagonal curves}
In this section we prove \Cref{thm:hexagonal} by showing that there are no curves $X_0(N)$ which are hexagonal over $\Q$ besides those that we will list in \Cref{thm:hexagonal}. An important tool used here is the following inequality of Ogg. It originally appeared as \cite[Theorem 3.1]{Ogg74}, but we state it in the simpler form as in \cite[Lemma 3.1]{HasegawaShimura_trig}.

\begin{lem}
    For a prime $p\nmid N$, let
    $$L_p(N):=\frac{p-1}{12}\psi(N)+2^{\omega(N)},$$
    where $\psi(N)=N\prod_{q\mid N}(1+\frac{1}{q})$ and $\omega(N)$ is the number of distinct prime divisors of $N$. Then
    $$L_p(N)\leq\#{X}_0(N)(\F_{p^2}).$$
\end{lem}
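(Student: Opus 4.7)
The plan is to bound $\#X_0(N)(\F_{p^2})$ from below by exhibiting two disjoint families of $\F_{p^2}$-rational points: the specializations of the $\Q$-rational cusps, and the non-cuspidal supersingular moduli points. Since $p\nmid N$, the scheme $X_0(N)$ has good reduction at $p$, so the cusps specialize to distinct smooth $\F_p$-rational points and are disjoint from the supersingular locus, making the two counts additive.

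For the cusps, one uses the classical description: cusps of $X_0(N)$ lie above the divisors $c$ of $N$, with $\phi(\gcd(c,N/c))$ cusps above $c$, and the $\Gal(\overline{\Q}/\Q)$-action on the fibre above $c$ factors through $(\Z/\gcd(c,N/c)\Z)^\times$. In particular, the unique cusp above $c$ is $\Q$-rational whenever $\gcd(c,N/c)=1$. The set of such divisors is in bijection with the subsets of the prime divisors of $N$, so there are exactly $2^{\omega(N)}$ such $\Q$-rational cusps, accounting for the second summand of $L_p(N)$.

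For the supersingular contribution, recall that every supersingular $E/\overline{\F_p}$ is defined over $\F_{p^2}$, and since $p\nmid N$ the scheme $E[N]$ is étale with Frobenius-squared acting on it as multiplication by $\pm p$; hence every cyclic subgroup $C\subset E[N]$ of order $N$ is fixed, giving a well-defined point of $X_0(N)(\F_{p^2})$. The Eichler mass formula at $\Gamma_0(N)$-level states
\begin{equation*}
\sum_{(E,C)}\frac{1}{\#\Aut(E,C)} \;=\; \frac{p-1}{24}\psi(N),
\end{equation*}
where the sum runs over isomorphism classes of supersingular pairs. As $-1\in\Aut(E,C)$ always forces $\#\Aut(E,C)\geq 2$, the number of such isomorphism classes is at least $2\cdot\tfrac{p-1}{24}\psi(N)=\tfrac{p-1}{12}\psi(N)$. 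Summing the two disjoint contributions yields
\begin{equation*}
\#X_0(N)(\F_{p^2}) \;\geq\; \frac{p-1}{12}\psi(N)+2^{\omega(N)} \;=\; L_p(N).
\end{equation*}

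The principal obstacle is the justification of the mass formula displayed above: for $N=1$ it is Deuring's classical theorem, and for general $N$ it follows from identifying supersingular pairs $(E,C)$ with left ideal classes of an Eichler order of level $N$ in the quaternion algebra $B_{p,\infty}$ ramified precisely at $p$ and $\infty$, together with Eichler's mass formula for such orders (equivalently, from the trace formula for the Brandt matrix). Assembling this from first principles is the nontrivial step, though it is entirely standard and can be quoted directly from, e.g., Gross's work on heights and special values, or from Chapter V of Silverman's second book on arithmetic of elliptic curves.
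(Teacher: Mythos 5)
Your argument is correct and is exactly the classical proof of this inequality: the paper itself offers no proof, citing it as Ogg's Theorem 3.1 (restated as Lemma 3.1 of Hasegawa--Shimura), and Ogg's original argument is precisely your decomposition into the $2^{\omega(N)}$ rational cusps plus the supersingular locus, with the Eichler mass formula and $-1\in\Aut(E,C)$ giving the $\frac{p-1}{12}\psi(N)$ lower bound. The only point to phrase carefully is that ``Frobenius squared acts as $\pm p$'' holds for a suitable $\F_{p^2}$-model of each supersingular curve (Deuring), which is what makes every order-$N$ cyclic subgroup rational over $\F_{p^2}$; with that noted, the proof is complete.
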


This allows us to get a finite and moreover a relatively small list of possible values $N$ for which $X_0(N)$ is hexagonal.

\begin{prop}\label{oggineq}
    The curve $X_0(N)$ is not hexagonal over $\Q$ for $N>335$ and 
    \begin{align*}
        N\in
        \{&220,222,224-226,228,230-232,234,236-238,242,244-246,248,250,252,\\
        &254-256,258,260-262,264-268,270,272-276,278,280,282,285,286,288,\\
        &290,292,294-298,300-306,308-310,312,314-316,318-330,332-335\}.
    \end{align*}
\end{prop}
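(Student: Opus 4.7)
First I would observe that if $X_0(N)$ is hexagonal over $\Q$, then Lemma \ref{lem:fp} applied over $\F_{p^2}$ with $d = 6$ forces $\#X_0(N)(\F_{p^2}) \leq 6(p^2+1)$ for every prime $p$ of good reduction. Combining this with the Ogg-type bound $L_p(N) \leq \#X_0(N)(\F_{p^2})$ (valid for $p \nmid N$), my goal reduces to exhibiting, for each $N$ in the statement, a prime $p \nmid N$ satisfying
\[
\frac{p-1}{12}\psi(N) + 2^{\omega(N)} > 6(p^2+1),
\]
equivalently $(p-1)\psi(N) > 72(p^2+1) - 12 \cdot 2^{\omega(N)}$. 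Whenever such a $p$ exists I conclude $\gon_\Q X_0(N) > 6$, ruling out hexagonality.

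For the finite portion of the list (the values between 220 and 335) I would perform a direct case-by-case verification, computing $L_p(N)$ for the first few primes $p \nmid N$. In practice the smallest prime coprime to $N$ (usually $p \in \{2,3,5\}$) already works; for instance, $N = 220$ is handled by $p = 3$ since $L_3(220) = 80 > 60$, and $N = 335$ is handled by $p = 2$ since $L_2(335) = 38 > 30$. This is a mechanical check that is easily automated.

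For $N > 335$ I would case-split on the smallest prime $p_0 \nmid N$, using the elementary lower bound $\psi(N) \geq N\prod_{q \mid N}(1+1/q)$. If $p_0 = 2$ (so $N$ is odd) then $\psi(N) \geq N+1 \geq 338$, which exceeds the required $360 - 12 \cdot 2^{\omega(N)} \leq 336$. If $p_0 = 3$ then $2 \mid N$ gives $\psi(N) \geq 3N/2 > 502$, dominating $360 - 6 \cdot 2^{\omega(N)}$. If $p_0 = 5$ then $6 \mid N$ gives $\psi(N) \geq 2N > 670$, beating $468 - 3 \cdot 2^{\omega(N)}$. If $p_0 = 7$ then $30 \mid N$ forces $N \geq 360$ and $\psi(N) \geq 2.4 N > 864 > 600 - 2 \cdot 2^{\omega(N)}$. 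Larger $p_0$ are handled analogously: each successive case forces $N$ to be larger and $\psi(N)/N$ to grow, so the required inequality only gets easier.

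The main obstacle is really organizational rather than conceptual: the argument is clean because $\psi(N)$ grows linearly in $N$ while the target $6(p^2+1)$ depends only on the auxiliary prime $p$, but one must keep the case analysis tidy and confirm the finite list of exceptions by direct numerical check. The cutoff $N > 335$ in the statement corresponds precisely to the boundary where the first few small primes uniformly suffice to close the argument; just below it, \emph{some} specific $N$ (for example those pairs $N = 227, 239$ which \emph{are} hexagonal) fail the Ogg-based test and must be dealt with by other methods elsewhere in the paper.
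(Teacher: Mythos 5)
Your proposal is correct and follows essentially the same route as the paper: contrapositive of \Cref{lem:fp} with $q=p^2$ and $d=6$ gives $\#X_0(N)(\F_{p^2})\leq 6(p^2+1)$, which combined with Ogg's bound forces $L_p(N)\leq 6(p^2+1)$ for all $p\nmid N$, and the rest is the case analysis on the smallest prime not dividing $N$ plus a finite check. The paper simply outsources that last step by citing the technique of \cite[Lemma 3.2]{HasegawaShimura_trig}, which is exactly the argument you spell out.
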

\begin{proof}
If the curve $X_0(N)$ is hexagonal over $\Q$, we must have $\#{X}_0(N)(\F_{p^2})\leq 6(p^2+1)$ by setting $q=p^2$ in \Cref{lem:fp}. Therefore, the inequality $L_p(N)\leq 6(p^2+1)$ must hold for every prime $p\nmid N$. Now using the same technique as in \cite[Lemma 3.2.]{HasegawaShimura_trig} we complete the proof.
\end{proof}

\begin{prop}\label{fp2points}
    The curve $X_0(N)$ is not hexagonal over $\Q$ for the following $N$:
\begin{center}
\begin{tabular}{|c|c|c||c|c|c|}
\hline
$N$ & $p$ & $\#{X}_0(N)(\F_{p^2})$ & $N$ & $p$ & $\#{X}_0(N)(\F_{p^2})$\\
\hline
    $182$ & $3$ & $64$ &
    $253$ & $2$ & $32$ \\
    $207$ & $2$ & $32$ &
    $259$ & $2$ & $34$ \\
    $208$ & $3$ & $68$ &
    $283$ & $3$ & $64$ \\
    $216$ & $5$ & $168$ &
    $289$ & $2$ & $32$ \\
    $218$ & $3$ & $64$ &
    $307$ & $3$ & $68$ \\
    $235$ & $2$ & $32$ &
    $313$ & $3$ & $68$ \\
    $240$ & $7$ & $312$ & 
    $317$ & $2$ & $35$ \\
    $243$ & $2$ & $33$ &
    $331$ & $2$ & $37$ \\
\hline
\end{tabular}
\end{center}
\end{prop}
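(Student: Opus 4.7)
The plan is to apply Lemma \ref{lem:fp} directly, once for each row of the table. Specializing the lemma to $d=6$ and $q=p^2$ says that if $X_0(N)$ has good reduction at $p$ and $\#X_0(N)(\F_{p^2}) > 6(p^2+1)$, then $\gon_\Q X_0(N) > 6$, so $X_0(N)$ is not hexagonal over $\Q$. The proposition therefore reduces to two routine verifications for each $N$ in the list: that the chosen prime $p$ satisfies $p\nmid N$ (ensuring good reduction of $X_0(N)$), and that the recorded point count exceeds the bound $6(p^2+1)$.

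First I would read off the thresholds. For $p=2$ we need $\#X_0(N)(\F_4) > 30$; for $p=3$ we need $\#X_0(N)(\F_9) > 60$; for $p=5$ we need $\#X_0(N)(\F_{25}) > 156$; and for $p=7$ we need $\#X_0(N)(\F_{49}) > 300$. Comparing with the table, each entry clears the relevant threshold (for instance $\#X_0(182)(\F_9)=64>60$, $\#X_0(240)(\F_{49})=312>300$, $\#X_0(317)(\F_4)=35>30$, etc.). Coprimality of $p$ and $N$ is trivial to check case by case: all the listed $N$ are visibly not divisible by their chosen $p$ (e.g.\ $240=2^4\cdot 3\cdot 5$ and $p=7$; $243=3^5$ and $p=2$; the primes $283,307,313,317,331$ paired with $p\in\{2,3\}$).

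The only nontrivial step is establishing the point counts themselves. The plan is to compute $\#X_0(N)(\F_{p^2})$ via the Eichler--Shimura relation: for a prime $\ell\nmid N$ one has
\[
\#X_0(N)(\F_{\ell^k}) \;=\; \ell^k + 1 \;-\; \sum_{f}\bigl(\alpha_f^{\,k}+\beta_f^{\,k}\bigr),
\]
where $f$ runs over a basis of $S_2(\Gamma_0(N))$ consisting of Hecke eigenforms (including oldforms), and $\alpha_f,\beta_f$ are the roots of $T^2 - a_\ell(f)\,T + \ell$. In practice this is done in \texttt{Magma} by forming the space of modular symbols for $\Gamma_0(N)$, decomposing it into Hecke-irreducible pieces, reading off the Hecke polynomials at $\ell=p$, and summing the squares of the Frobenius eigenvalues. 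Alternatively one can count points directly on an explicit model of $X_0(N)$ over $\F_{p^2}$, but for the larger $N$ in the table the trace-formula route via modular symbols is by far the more efficient path.

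The main obstacle, such as it is, is the size of the modular symbol computation for the largest $N$ on the list (e.g.\ $N=317, 331$), but these are standard \texttt{Magma} calls. With the point counts in hand, the proof is complete: for each row one checks the strict inequality $\#X_0(N)(\F_{p^2}) > 6(p^2+1)$ and cites Lemma \ref{lem:fp} to conclude $\gon_\Q X_0(N) \geq 7$, ruling out hexagonality.
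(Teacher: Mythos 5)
Your proposal is correct and is exactly the paper's argument: the proof of \Cref{fp2points} simply observes that $\#X_0(N)(\F_{p^2})>6(p^2+1)$ for each listed $N$ and invokes \Cref{lem:fp} with $q=p^2$ and $d=6$. Your additional remarks on verifying the thresholds, checking $p\nmid N$, and computing the point counts via Eichler--Shimura/modular symbols are all sound elaborations of the same route.
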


\begin{proof}
    For each of these $N$ we have that $\#{X}_0(N)(\F_{p^2})> 6(p^2+1)$ from which it follows by \Cref{lem:fp} that $X_0(N)$ is not hexagonal over $\Q$.
\end{proof}

\begin{prop}\label{prop212}
    The curve $X_0(212)$ is not hexagonal over $\Q$.
\end{prop}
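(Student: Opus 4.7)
The placement of this proposition---sandwiched between the quick elimination steps of \Cref{oggineq} and \Cref{fp2points} and the final assembly of \Cref{thm:hexagonal}---strongly suggests that $N=212$ is precisely the residual value for which those two tests both fail. Indeed, $\psi(212)=324$ and $\omega(212)=2$, and a direct check shows that $L_p(212)\leq 6(p^2+1)$ for every prime $p\nmid 212$, so neither Ogg's inequality nor a modest point-count is enough; we must fall back on a finer technique. My plan is to apply the Mordell--Weil sieving on Brill--Noether varieties from \Cref{sec:MW}, in direct analogy with the $N=145$ case of \Cref{prop:97_133}, where exactly that sieve was used to produce the lower bound $\gon_\Q X_0(145)\geq 7$.

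Concretely: first I would verify that $J_0(212)^-$ has analytic rank zero, so that $J_0(212)^-(\Q)$ is finite, and then, following \cite[Section 4]{Deg3Class}, bound $J_0(212)(\Q)_{\tors}$ from above and exhibit an explicit subgroup $T$ of finite index $n$ inside that bound, generated by cuspidal divisor differences. Any hypothetical degree-$6$ function on $X_0(212)$ defined over $\Q$ yields a class $D\in W_6^1(X_0(212))(\Q)$, and then $n\mu(D)=n(D-w_{212}(D))\in T$ must reduce modulo $p$ into $n\mu\bigl(W_6^1(X_0(212))(\F_p)\bigr)$ for every odd prime $p$ of good reduction, using injectivity of reduction on torsion at such primes \cite[Appendix]{katz81}.

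The sieving step itself is to choose a small prime $p$ of good reduction---the $N=133,145$ cases in \Cref{prop:97_133} used $p=3$, which is the first natural choice here as well---and then enumerate $W_6^1(X_0(212))(\F_p)$, checking that none of its $n\mu$-images meets the reduction of $T$. The pigeonhole reduction of \Cref{lem:F_p_div_search}, already exploited in \Cref{prop4.26}, \Cref{prop:130} and the $N=133$ case of \Cref{prop:97_133}, restricts the enumeration to effective degree-$6$ divisors whose rational support has size at most $\lfloor\#X_0(212)(\F_p)/(p+1)\rfloor$, which is what makes the enumeration tractable.

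The main obstacle is computational rather than conceptual. At $g(X_0(212))=25$, Riemann--Roch dimension computations over $\F_p$ are substantially heavier than for the genus-$11$ through $14$ curves of \Cref{prop:97_133}, and the prime $p$ must be chosen so that $\#X_0(212)(\F_p)$ is small enough for the pigeonhole restriction to cut the search down to a manageable family of admissible supports, while still permitting one to rule out every class in $W_6^1(X_0(212))(\F_p)$. Obtaining an upper bound on $J_0(212)(\Q)_{\tors}$ sharp enough to give a small index $n$---and hence a decisive sieve---is the other nontrivial input; as in the $N=145$ case, the hope is that $T$ can be taken large enough that $n$ is very small (e.g.\ a $2$-power), reducing the number of torsion fibers that have to be inspected in the sieve.
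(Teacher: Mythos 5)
There is a genuine gap here: what you have written is a plan for a computation, not a proof. You propose to run the Mordell--Weil sieve on $W_6^1(X_0(212))$, but none of the required inputs are actually established --- you do not verify that $J_0(212)^-$ has analytic rank zero, you do not produce the torsion bound or the cuspidal subgroup $T$, and you do not carry out (or even argue the feasibility of) the enumeration of $W_6^1(X_0(212))(\F_p)$ for a genus $25$ curve. A sieve of this kind comes with no a priori guarantee of being decisive; the statement is only proved once the intersection of the relevant fibers is actually shown to be empty. As it stands, the proposal establishes nothing about $X_0(212)$.

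The idea you are missing is much more elementary and requires no new computation at all: since $106\mid 212$, there is a degeneracy map $X_0(212)\to X_0(106)$, and the paper has already shown that $\gon_\Q X_0(106)=8$ (upper bound from \Cref{prop4.11}, lower bound from \Cref{prop:fp}). By \Cref{poonenlowerbound}, the gonality of the source dominates that of the target, so $\gon_\Q X_0(212)\geq 8>6$, and $X_0(212)$ is not hexagonal. This is the paper's entire proof. Your instinct that $N=212$ is the residual case surviving both \Cref{oggineq} and \Cref{fp2points} is correct, but the resolution is to push a known lower bound up a dominant map rather than to deploy the Brill--Noether sieve of \Cref{prop:97_133}.
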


\begin{proof}
    The curve $X_0(106)$ has $\Q$-gonality equal to $8$ by \Cref{prop4.11} and \Cref{prop:fp}. Therefore, the $\Q$-gonality of the curve $X_0(212)$ must be at least $8$ by \Cref{poonenlowerbound}.
\end{proof}
\begin{proof}[Proof of \Cref{thm:hexagonal}]
    This now follows directly from \Cref{main_tm} and Propositions \ref{oggineq} - \ref{prop212}.
\end{proof}





\section{Limits of our methods and future work} \label{sec:future work}
In this section we briefly discuss why our methods can't be pushed much further for $X_0(N)$, and how they could be applied to other modular curves, or even more generally, to arbitrary curves. 

\subsection{Complexity and obstacles to pushing further}
\smallskip

It is a natural question what stopped us from going further, i.e. determining the gonality for larger $N$. Unfortunately, as $N$ gets larger, computations get much harder. As the genus of $X_0(N)$ becomes larger, computing models, their quotients, and computations in Riemann-Roch spaces over $\Q$ all become much more difficult. Furthermore, as the gonalities get larger, the degrees of divisors and the sheer number of divisors needed to be considered (as in \Cref{prop:fp}) makes computations of the $\F_p$-gonality far more difficult. In particular, the most computationally demanding computations that we do are the ones to determine a lower bound for the $\F_p$-gonality. This requires computing the dimension of a huge number of Riemann-Roch spaces. While the complexity of computing a single Riemann-Roch space is polynomial in the size of the input (see \cite{Hess02}), the number of Riemann-Roch spaces that need to be computed to give a lower bound of $d$ for the $\F_p$-gonality is $O(p^d)$. By \cite[Theorem 0.1]{abramovich} we can expect the gonality of $X_0(N)$ to grow linearly in $N$, which suggests that the number of Riemann-Roch spaces that need to be checked grows exponentially with $N$ (and doubly exponentially with the size of $N$). The necessity of choosing (very) small $p$ when computing the $\F_p$-gonality is clear from the complexity discussion above.   

It should be clear that our methods do not give an algorithm for computing the gonality of $X_0(N)$. They produce a lower bound and an upper bound, but there is no guarantee that they will be equal. In practice this (the bounds not matching) is exactly what happens for $N$ larger than the ones that we list. It often happens that for a curve X, one has $\gon_{\F_p} X< \gon_{\Q}X$. For example this happens when $n=\gon_\C X<\gon_{\Q}X$, and a degree $n$ map to $\PP^1$ is defined over a number field $K$ in which $p$ splits completely. Then it follows that $\gon_{\F_p} X\leq n$, and hence the lower bound obtained by computing  $\gon_{\F_p} X$ will not be sharp. 

In practice, for the $N$-s where we started encountering difficulties and were unable to compute the exact gonality, the bounds not matching was the more common problem than the computations being too demanding.

\subsection{Applications and future work}

As has been mentioned in the introduction, the LMFDB will contain modular curves $X_\Gamma$ for all congruence subgroups $\Gamma\leq \SL_2(\Z)$ up to some level. Since the methods we use to obtain lower and upper bounds for the gonality in \Cref{sec:lb} and \Cref{sec:ub}, respectively, work for any curve, the same methods should be, in principle, able to determine the gonality of many other $X_\Gamma$. However, there are several properties of $X_0(N)$ that make them particularly amenable to our methods. The first is the existence of Atkin-Lehner involutions on $X_0(N)$. Their existence makes us, on one hand able to construct explicit functions, giving upper bounds, and on the other hand to use the Castelnuovo-Severi inequality to obtain lower bounds. Another useful property of $X_0(N)$ is the existence of rational points, namely cusps. This is a necessary assumption in some of the methods that we use, e.g. \Cref{tm:TT} and \Cref{cor:TT}. While other modular curves have rational cusps, many interesting ones, such as $X_{ns}^+(p)$ (the modular curves corresponding to the normalizer of the non-split Cartan subgroup) don't. 

As an application of our results, the second author has been working on determining the $N$ for which $X_0(N)$ have infinitely many quartic points. The main difficulty in solving the problem is determining whether a given $X_0(N)$ has a degree $4$ map to an elliptic curve.

\section{Summary of results}
We summarize our results in the following tables. For each value of $N$, there are $7$ entries, listed in the order that they appear in: the genus $g$, the gonality of $X_0(N)$ over $\Q$ (denoted by $\gon_\Q$), references to how the lower and upper bound for the $\Q$-gonality were obtained (denoted by LB and UB, respectively), the gonality of $X_0(N)$ over $\C$ (denoted by $\gon_\C$) and finally references to how the lower and upper bound for the $\C$-gonality were obtained (again denoted by LB and UB, respectively).


For larger $N$, we show only those whose $\Q$-gonality we have determined and skip the others.

\clearpage
\begin{table}[ht]
\centering
\begin{tabular}{|c|c|c|c|c|c|c|c||c|c|c|c|c|c|c|c|}
  \hline
  $N$ & $g$ & ${\scriptstyle \gon_\Q}$  & LB & UB & ${\scriptstyle \gon_\C}$ & LB & UB & $N$ & $g$ & ${\scriptstyle \gon_\Q}$ & LB & UB & ${\scriptstyle \gon_\C}$ & LB & UB\\
  \hline

    $\leq10$ & $0$ & $1$ & & & $1$ & & &
    $11$ & $1$ & $2$ & \cite{Ogg74} & \cite{Ogg74} & $2$ & \cite{Ogg74} & \cite{Ogg74}\\
    $12$ & $0$ & $1$ & & & $1$ & & &
    $13$ & $0$ & $1$ & & & $1$ & &\\
    $14$ & $1$ & $2$ & \cite{Ogg74} & \cite{Ogg74} & $2$ & \cite{Ogg74} & \cite{Ogg74} &
    $15$ & $1$ & $2$ & \cite{Ogg74} & \cite{Ogg74} & $2$ & \cite{Ogg74} & \cite{Ogg74}\\
    $16$ & $0$ & $1$ & & & $1$ & & &
    $17$ & $1$ & $2$ & \cite{Ogg74} & \cite{Ogg74} & $2$ & \cite{Ogg74} & \cite{Ogg74}\\
    $18$ & $0$ & $1$ & & & $1$ & & &
    $19$ & $1$ & $2$ & \cite{Ogg74} & \cite{Ogg74} & $2$ & \cite{Ogg74} & \cite{Ogg74}\\
    $20$ & $1$ & $2$ & \cite{Ogg74} & \cite{Ogg74} & $2$ & \cite{Ogg74} & \cite{Ogg74} &
    $21$ & $1$ & $2$ & \cite{Ogg74} & \cite{Ogg74} & $2$ & \cite{Ogg74} & \cite{Ogg74}\\
    $22$ & $2$ & $2$ & \cite{Ogg74} & \cite{Ogg74} & $2$ & \cite{Ogg74} & \cite{Ogg74} &
    $23$ & $2$ & $2$ & \cite{Ogg74} & \cite{Ogg74} & $2$ & \cite{Ogg74} & \cite{Ogg74}\\
    $24$ & $1$ & $2$ & \cite{Ogg74} & \cite{Ogg74} & $2$ & \cite{Ogg74} & \cite{Ogg74} &
    $25$ & $0$ & $1$ & & & $1$ & &\\
    $26$ & $2$ & $2$ & \cite{Ogg74} & \cite{Ogg74} & $2$ & \cite{Ogg74} & \cite{Ogg74} &
    $27$ & $1$ & $2$ & \cite{Ogg74} & \cite{Ogg74} & $2$ & \cite{Ogg74} & \cite{Ogg74}\\
    $28$ & $2$ & $2$ & \cite{Ogg74} & \cite{Ogg74} & $2$ & \cite{Ogg74} & \cite{Ogg74} &
    $29$ & $2$ & $2$ & \cite{Ogg74} & \cite{Ogg74} & $2$ & \cite{Ogg74} & \cite{Ogg74}\\
    $30$ & $3$ & $2$ & \cite{Ogg74} & \cite{Ogg74} & $2$ & \cite{Ogg74} & \cite{Ogg74} &
    $31$ & $2$ & $2$ & \cite{Ogg74} & \cite{Ogg74} & $2$ & \cite{Ogg74} & \cite{Ogg74}\\
    $32$ & $1$ & $2$ & \cite{Ogg74} & \cite{Ogg74} & $2$ & \cite{Ogg74} & \cite{Ogg74} &
    $33$ & $3$ & $2$ & \cite{Ogg74} & \cite{Ogg74} & $2$ & \cite{Ogg74} & \cite{Ogg74}\\
    $34$ & $3$ & $3$ & ${\scriptstyle \gon_\C}$ & \cite{HasegawaShimura_trig} & $3$ & \cite{HasegawaShimura_trig} & \cite{HasegawaShimura_trig}&
    $35$ & $3$ & $2$ & \cite{Ogg74} & \cite{Ogg74} & $2$ & \cite{Ogg74} & \cite{Ogg74}\\
    $36$ & $1$ & $2$ & \cite{Ogg74} & \cite{Ogg74} & $2$ & \cite{Ogg74} & \cite{Ogg74} &
    $37$ & $2$ & $2$ & \cite{Ogg74} & \cite{Ogg74} & $2$ & \cite{Ogg74} & \cite{Ogg74}\\
    $38$ & $4$ & $4$ & \ref{prop:fp} & \cite{HasegawaShimura_trig} & $3$ & \cite{HasegawaShimura_trig} & \cite{HasegawaShimura_trig} &
    $39$ & $3$ & $2$ & \cite{Ogg74} & \cite{Ogg74} & $2$ & \cite{Ogg74} & \cite{Ogg74}\\
    $40$ & $3$ & $2$ & \cite{Ogg74} & \cite{Ogg74} & $2$ & \cite{Ogg74} & \cite{Ogg74} &
    $41$ & $3$ & $2$ & \cite{Ogg74} & \cite{Ogg74} & $2$ & \cite{Ogg74} & \cite{Ogg74}\\
    $42$ & $5$ & $4$ & ${\scriptstyle \gon_\C}$ & \ref{prop4.6} & $4$ &  \cite{JeonPark05} & \cite{JeonPark05}  &
    $43$ & $3$ & $3$ & ${\scriptstyle \gon_\C}$ & \cite{HasegawaShimura_trig} & $3$ & \cite{HasegawaShimura_trig} & \cite{HasegawaShimura_trig}\\
    $44$ & $4$ & $4$ & \ref{prop:fp} & \cite{HasegawaShimura_trig} & $3$ & \cite{HasegawaShimura_trig}& \cite{HasegawaShimura_trig} &
    $45$ & $3$ & $3$ & ${\scriptstyle \gon_\C}$ & \cite{HasegawaShimura_trig} & $3$ & \cite{HasegawaShimura_trig} & \cite{HasegawaShimura_trig}\\
    $46$ & $5$ & $2$ & \cite{Ogg74} & \cite{Ogg74} & $2$ & \cite{Ogg74} & \cite{Ogg74} &
    $47$ & $4$ & $2$ & \cite{Ogg74} & \cite{Ogg74} & $2$ & \cite{Ogg74} & \cite{Ogg74}\\
    $48$ & $3$ & $2$ & \cite{Ogg74} & \cite{Ogg74} & $2$ & \cite{Ogg74} & \cite{Ogg74} &
    $49$ & $1$ & $2$ & \cite{Ogg74} & \cite{Ogg74} & $2$ & \cite{Ogg74} & \cite{Ogg74}\\
    $50$ & $2$ & $2$ & \cite{Ogg74} & \cite{Ogg74} & $2$ & \cite{Ogg74} & \cite{Ogg74} &
    $51$ & $5$ & $4$ & ${\scriptstyle \gon_\C}$ & \ref{prop4.5} & $4$ & \cite{JeonPark05} & \cite{JeonPark05} \\
    $52$ & $5$ & $4$ & ${\scriptstyle \gon_\C}$ & \ref{prop4.6} & $4$ & \cite{JeonPark05} & \cite{JeonPark05} &
    $53$ & $4$ & $4$ & \ref{prop:fp} & \cite{HasegawaShimura_trig} & $3$ &\cite{HasegawaShimura_trig} & \cite{HasegawaShimura_trig}\\
    $54$ & $4$ & $3$ & ${\scriptstyle \gon_\C}$  & \cite{HasegawaShimura_trig} & $3$ & \cite{HasegawaShimura_trig} & \cite{HasegawaShimura_trig} & 
    $55$ & $5$ & $4$ & ${\scriptstyle \gon_\C}$ & \ref{prop4.5} & $4$ & \cite{JeonPark05} & \cite{JeonPark05}\\
    $56$ & $5$ & $4$ & ${\scriptstyle \gon_\C}$ & \ref{prop4.5} & $4$ & \cite{JeonPark05} & \cite{JeonPark05} & 
    $57$ & $5$ & $4$ & ${\scriptstyle \gon_\C}$ & \ref{prop4.6} & $4$ & \cite{JeonPark05} & \cite{JeonPark05}\\
    $58$ & $6$ & $4$ & ${\scriptstyle \gon_\C}$ & \ref{prop4.6} & $4$ & \cite{JeonPark05} & \cite{JeonPark05} & 
    $59$ & $5$ & $2$ & \cite{Ogg74} & \cite{Ogg74} & $2$ & \cite{Ogg74} & \cite{Ogg74}\\
    $60$ & $7$ & $4$ & ${\scriptstyle \gon_\C}$ & \ref{prop4.6} & $4$ & \cite{JeonPark05} & \cite{JeonPark05} & 
    $61$ & $4$ & $4$ & \ref{prop:fp} & \cite{HasegawaShimura_trig} & $3$ & \cite{HasegawaShimura_trig} &\cite{HasegawaShimura_trig}\\
    $62$ & $7$ & $4$ & ${\scriptstyle \gon_\C}$ & \ref{prop4.5} & $4$ & \cite{JeonPark05} & \cite{JeonPark05} & 
    $63$ & $5$ & $4$ & ${\scriptstyle \gon_\C}$ & \ref{prop4.5} & $4$ & \cite{JeonPark05} & \cite{JeonPark05}\\
    $64$ & $3$ & $3$ & ${\scriptstyle \gon_\C}$ & \cite{HasegawaShimura_trig} & $3$ & \cite{HasegawaShimura_trig} & \cite{HasegawaShimura_trig}& 
    $65$ & $5$ & $4$ & ${\scriptstyle \gon_\C}$ & \ref{prop4.5} & $4$ & \cite{JeonPark05} & \cite{JeonPark05}\\
    $66$ & $9$ & $4$ & ${\scriptstyle \gon_\C}$ & \ref{prop4.6} & $4$ & \cite{JeonPark05} & \cite{JeonPark05} & 
    $67$ & $5$ & $4$ & ${\scriptstyle \gon_\C}$ & \ref{prop4.6} & $4$ & \cite{JeonPark05} & \cite{JeonPark05}\\
    $68$ & $7$ & $4$ & ${\scriptstyle \gon_\C}$ & \ref{prop4.6} & $4$ & \cite{JeonPark05} & \cite{JeonPark05} & 
    $69$ & $7$ & $4$ & ${\scriptstyle \gon_\C}$ & \ref{prop4.5} & $4$ & \cite{JeonPark05} & \cite{JeonPark05}\\
    $70$ & $9$ & $4$ & ${\scriptstyle \gon_\C}$ & \ref{prop4.6} & $4$ & \cite{JeonPark05} & \cite{JeonPark05} & 
    $71$ & $6$ & $2$ & \cite{Ogg74} & \cite{Ogg74} & $2$ & \cite{Ogg74} & \cite{Ogg74}\\
    $72$ & $5$ & $4$ & ${\scriptstyle \gon_\C}$ & \ref{prop4.9} & $4$ & \cite{JeonPark05} & \cite{JeonPark05} & 
    $73$ & $5$ & $4$ & ${\scriptstyle \gon_\C}$ & \ref{prop4.6} & $4$ & \cite{JeonPark05} & \cite{JeonPark05}\\
    $74$ & $8$ & $4$ & ${\scriptstyle \gon_\C}$ & \ref{prop4.6} & $4$ & \cite{JeonPark05} & \cite{JeonPark05} & 
    $75$ & $5$ & $4$ & ${\scriptstyle \gon_\C}$ & \ref{prop4.5} & $4$ & \cite{JeonPark05} & \cite{JeonPark05}\\
    $76$ & $8$ & $6$ & \ref{prop:fp} & \ref{prop4.10} & $5$ & \cite{JeonPark05} & \ref{prop_A.1} &
    $77$ & $7$ & $4$ & ${\scriptstyle \gon_\C}$ & \ref{prop4.6} & $4$ & \cite{JeonPark05} & \cite{JeonPark05}\\
    $78$ & $11$ & $4$ & ${\scriptstyle \gon_\C}$ & \cite{HasegawaShimura_trig} & $4$ & \cite{JeonPark05} & \cite{JeonPark05} & 
    $79$ & $6$ & $4$ & ${\scriptstyle \gon_\C}$ & \ref{prop4.5} & $4$ & \cite{JeonPark05} & \cite{JeonPark05}\\
    $80$ & $7$ & $4$ & ${\scriptstyle \gon_\C}$ & \ref{prop4.6} & $4$ & \cite{JeonPark05} & \cite{JeonPark05} & 
    $81$ & $4$ & $3$ & ${\scriptstyle \gon_\C}$ & \cite{HasegawaShimura_trig} & $3$ & \cite{HasegawaShimura_trig} & \cite{HasegawaShimura_trig}\\
    $82$ & $9$ & $6$ & \ref{prop:fp} & \ref{prop4.9} & $[5,6]$ & \cite{JeonPark05} & ${\scriptstyle \gon_\Q}$ & 
    $83$ & $5$ & $4$ & ${\scriptstyle \gon_\C}$ & \ref{prop4.5} & $4$ & \cite{JeonPark05} & \cite{JeonPark05}\\
    $84$ & $11$ & $6$ & \ref{prop:fp} & \ref{cor4.16_UB} & $6$ & \ref{Cgon6} & ${\scriptstyle \gon_\Q}$ &
    $85$ & $7$ & $4$ & ${\scriptstyle \gon_\C}$ & \ref{prop4.7} & $4$ & \cite{JeonPark05} & \cite{JeonPark05}\\
    $86$ & $10$ & $6$ & \ref{prop:fp} & \ref{prop4.10} & $6$ & \ref{Cgon6} & ${\scriptstyle \gon_\Q}$ & 
    $87$ & $9$ & $4$ & ${\scriptstyle \gon_\C}$ & \ref{prop4.6} & $4$ & \cite{JeonPark05} & \cite{JeonPark05} \\
    $88$ & $9$ & $4$ & ${\scriptstyle \gon_\C}$ & \ref{prop4.7} & $4$ & \cite{JeonPark05} & \cite{JeonPark05} &
    $89$ & $7$ & $4$ & ${\scriptstyle \gon_\C}$ & \ref{prop4.5} & $4$ & \cite{JeonPark05} & \cite{JeonPark05} \\ 
    $90$ & $11$ & $6$ & ${\scriptstyle \gon_\C}$ & \ref{prop4.9} & $6$ & \ref{prop4.24} & ${\scriptstyle \gon_\Q}$ &
    $91$ & $7$ & $4$ & ${\scriptstyle \gon_\C}$ & \ref{prop4.6} & $4$ & \cite{JeonPark05} & \cite{JeonPark05}  \\
    $92$ & $10$ & $4$ & ${\scriptstyle \gon_\C}$ & \ref{prop4.5} & $4$ & \cite{JeonPark05} & \cite{JeonPark05} &
    $93$ & $9$ & $6$ & \ref{prop:fp} & \ref{cor4.16_UB} & $6$ & \ref{Cgon6} &  ${\scriptstyle \gon_\Q}$ \\

    \hline
\end{tabular}\\
\vspace{5mm}
\caption{Gonalities of $X_0(N)$, for $N\leq93$.}
\label{tab:main}
\end{table}

\clearpage
\begin{table}[ht]
\centering
\begin{tabular}{|c|c|c|c|c|c|c|c||c|c|c|c|c|c|c|c|}
  \hline
  $N$ & $g$ & ${\scriptstyle \gon_\Q}$  & LB & UB & ${\scriptstyle \gon_\C}$ & LB & UB & $N$ & $g$ & ${\scriptstyle \gon_\Q}$ & LB & UB & ${\scriptstyle \gon_\C}$ & LB & UB\\
  \hline
  
    $94$ & $11$ & $4$ & ${\scriptstyle \gon_\C}$ & \cite{HasegawaShimura_trig} & $4$ & \cite{JeonPark05} & \cite{JeonPark05} &
    $95$ & $9$ & $4$ & ${\scriptstyle \gon_\C}$ & \ref{prop4.5} & $4$ & \cite{JeonPark05} & \cite{JeonPark05} \\
    $96$ & $9$ & $4$ & ${\scriptstyle \gon_\C}$ & \ref{prop4.9} & $4$ & \cite{JeonPark05} & \cite{JeonPark05} &
    $97$ & $7$ & $6$ & \ref{prop:97_133} & \ref{prop4.10} & $5$ & \cite{JeonPark05} & \ref{prop_A.1} \\
    $98$ & $7$ & $4$ & ${\scriptstyle \gon_\C}$ & \ref{prop4.6} & $4$ & \cite{JeonPark05} & \cite{JeonPark05} &
    $99$ & $9$ & $6$ & \ref{prop4.26} &  \ref{prop4.9} & $4$ & \cite{JeonPark05} & \cite{JeonPark05} \\
    $100$ & $7$ & $4$ & ${\scriptstyle \gon_\C}$ & \ref{prop4.6} & $4$ & \cite{JeonPark05} & \cite{JeonPark05} &
    $101$ & $8$ & $4$ & ${\scriptstyle \gon_\C}$ & \ref{prop4.5} & $4$ & \cite{JeonPark05} & \cite{JeonPark05}\\
    $102$ & $15$ & $8$ & \ref{prop:fp} & \ref{prop4.11} & $[6,8]$ & \ref{Cgon6cs} & ${\scriptstyle \gon_\Q}$ &
    $103$ & $8$ & $4$ & ${\scriptstyle \gon_\C}$ & \ref{prop4.6} & $4$ & \cite{JeonPark05} & \cite{JeonPark05}\\
    $104$ & $11$ & $4$ & ${\scriptstyle \gon_\C}$ & \cite{HasegawaShimura_trig} & $4$ & \cite{JeonPark05} & \cite{JeonPark05} &
    $105$ & $13$ & $6$ & \ref{prop4.13} & \ref{prop4.13_UB} & $6$ & \ref{prop4.13} & ${\scriptstyle \gon_\Q}$\\
    $106$ & $12$ & $8$ & \ref{prop:fp}& \ref{prop4.11}& $[6,7]$ &\ref{Cgon6} &\ref{prop_A.1} &
    $107$ & $9$ & $4$ & ${\scriptstyle \gon_\C}$ & \ref{prop4.6} & $4$ & \cite{JeonPark05} & \cite{JeonPark05}\\
    $108$ & $10$ & $6$ & \ref{prop:fp} & \ref{prop4.5} & $[5,6]$ & \cite{JeonPark05} & ${\scriptstyle \gon_\Q}$ &
    $109$ & $8$ & $5$ & \ref{prop:fp} & \ref{prop4.14} & $4$ & \cite{JeonPark05} & \cite{JeonPark05} \\
    $110$ & $15$ & $8$ & \ref{cor4.18} & \ref{prop4.10} & $6$ & \ref{cor4.18} & \ref{prop4.10} &
    $111$ & $9$ & $4$ & ${\scriptstyle \gon_\C}$ & \cite{HasegawaShimura_trig} & $4$ & \cite{JeonPark05} & \cite{JeonPark05}\\
    $112$ & $11$ & $6$ & \ref{prop:fp} & \ref{prop4.28} & $[5,6]$ & \cite{JeonPark05} & ${\scriptstyle \gon_\Q}$ &
    $113$ & $9$ & $6$ & \ref{prop:fp} & \ref{prop4.10} & $[5,6]$ & \cite{JeonPark05} & ${\scriptstyle \gon_\Q}$\\
    $114$ & $17$ & $8$ & \ref{prop:fp} & \ref{prop4.11} & $[6,8]$ & \ref{prop:gon_ge_6} &  ${\scriptstyle \gon_\Q}$ &
    $115$ & $11$ & $6$ & \ref{prop:fp} & \ref{cor4.16_UB} & $6$ & \ref{Cgon6} & ${\scriptstyle \gon_\Q}$\\
    $116$ & $13$ & $6$ & \ref{prop4.13} & \ref{cor4.16_UB} & $6$ & \ref{prop4.13} & ${\scriptstyle \gon_\Q}$ &
    $117$ & $11$ & $6$ & \ref{prop:fp} & \ref{prop4.9} & $[5,6]$ & \cite{JeonPark05} & ${\scriptstyle \gon_\Q}$\\
    $118$ & $14$ & $6$ & \ref{prop4.13} & \ref{prop4.13_UB} & $6$ & \ref{prop4.13} & ${\scriptstyle \gon_\Q}$ &
    $119$ & $11$ & $4$ & ${\scriptstyle \gon_\C}$ & \cite{HasegawaShimura_trig} & $4$ & \cite{JeonPark05} & \cite{JeonPark05}\\
    $120$ & $17$ & $8$ & \ref{prop4.20} & \ref{prop4.11} & $8$ & \ref{prop4.20} & ${\scriptstyle \gon_\Q}$ &
    $121$ & $6$ & $4$ & ${\scriptstyle \gon_\C}$ & \ref{prop4.6} & $4$ & \cite{JeonPark05} & \cite{JeonPark05}\\
    $122$ & $14$ & $6$ & \ref{prop:fp} & \ref{prop4.13_UB} & $[5,6]$ & \cite{JeonPark05} & ${\scriptstyle \gon_\Q}$ &
    $123$ & $13$ & $6$ & \ref{prop4.13} & \ref{prop4.13_UB} & $6$ & \ref{prop4.13} &${\scriptstyle \gon_\Q}$ \\
    $124$ & $14$ & $6$ & \ref{prop4.13} & \ref{prop4.13_UB} & $6$ & \ref{prop4.13} & ${\scriptstyle \gon_\Q}$ &
    $125$ & $8$ & $4$ & ${\scriptstyle \gon_\C}$ & \ref{prop4.6} & $4$ & \cite{JeonPark05} & \cite{JeonPark05}\\
    $126$ & $17$ & $8$ & \ref{prop4.20} & \ref{prop4.11} & $8$ & \ref{prop4.20} & ${\scriptstyle \gon_\Q}$ &
    $127$ & $10$ & $6$ & \ref{prop:fp} & \ref{prop4.10} & $6$ & \ref{Cgon6} & ${\scriptstyle \gon_\Q}$\\
    $128$ & $9$ & $6$ & \ref{prop:fp} & \ref{prop4.10} & $6$ & \ref{Cgon6} & ${\scriptstyle \gon_\Q}$ &
    $129$ & $13$ & $6$ & \ref{prop:fp} & \ref{cor4.16_UB} & $6$ &\ref{Cgon6cs} &${\scriptstyle \gon_\Q}$ \\
    $130$ & $17$ & $8$ & \ref{prop:130} & \ref{prop4.11} & $[6,8]$ & \ref{prop:gon_ge_6} & ${\scriptstyle \gon_\Q}$ & 
    $131$ & $11$ & $4$ & ${\scriptstyle \gon_\C}$ & \cite{HasegawaShimura_trig} & $4$ & \cite{JeonPark05} & \cite{JeonPark05}\\
    $132$ & $19$ & $8$ & \ref{prop:fp} & \ref{prop4.9}& $[6,8]$ & \ref{prop:gon_ge_6} & ${\scriptstyle \gon_\Q}$ &
    $133$ & $11$ & $8$ &\ref{prop:97_133} & \ref{prop4.10}& $6$ & \ref{Cgon6} & \ref{prop4.10} \\
    $134$ & $16$ & $8$ &\ref{prop:fp} & \ref{prop4.10} & $[6,8]$ & \ref{prop:gon_ge_6} & ${\scriptstyle \gon_\Q}$ &
    $135$ & $13$ & $6$ & ${\scriptstyle \gon_\C}$ &\ref{prop4.10} & $6$ & $\ref{prop:gon_ge_6}$ & ${\scriptstyle \gon_\Q}$ \\
    $136$ & $15$ & $8$ &\ref{prop:fp} & \ref{prop4.9} & $[5,8]$ & \cite{JeonPark05} & ${\scriptstyle \gon_\Q}$ &
    $137$ & $11$ & $6$ &\ref{prop:fp} & \ref{cor4.16_UB}& $6$ & \ref{Cgon6} & ${\scriptstyle \gon_\Q}$ \\
    $138$ & $21$ & $8$ &\ref{prop4.20} &\ref{prop4.11} & $8$ & \ref{prop4.20} & ${\scriptstyle \gon_\C}$&
    $139$ & $11$ & $6$ &\ref{prop4.13} & \ref{prop4.10}& $6$ & \ref{prop4.13} & ${\scriptstyle \gon_\Q}$ \\
    $140$ & $19$ & $8$ &\ref{prop:fp} &\ref{prop4.9} & $[6,8]$ & \ref{prop:gon_ge_6} & ${\scriptstyle \gon_\Q}$ &
    $141$ & $15$ & $6$ &\ref{prop4.13} & \ref{prop4.10} & $6$ & \ref{prop4.13} & ${\scriptstyle \gon_\Q}$ \\
    $142$ & $17$ & $4$ & ${\scriptstyle \gon_\C}$ & \cite{HasegawaShimura_trig} &$4$ & \cite{JeonPark05} & \cite{JeonPark05}   &
    $143$ & $13$ & $4$ & ${\scriptstyle \gon_\C}$ & \cite{HasegawaShimura_trig} & $4$ &\cite{JeonPark05} & \cite{JeonPark05}   \\
    $144$ & $13$ & $6$ &\ref{prop:fp} & \ref{prop4.9}& $[5,6]$ & \cite{JeonPark05} & ${\scriptstyle \gon_\Q}$ &
    $145$ & $13 $ & $[7,8]$ & \ref{prop:97_133} & \ref{prop4.10} & $6$ &\ref{prop:gon_ge_6} & \ref{prop4.10} \\
    $146$ & $17$ & $6$ &\ref{prop4.13} & \ref{prop4.13_UB}& $6$ & \ref{prop4.13} & ${\scriptstyle \gon_\Q}$ &
    $147$ & $11$ & $6$ &\ref{prop:fp} & \ref{prop4.13_UB}& $[5,6]$ & \cite{JeonPark05} & ${\scriptstyle \gon_\Q}$  \\
    $148$ & $17$ & $8$ &\ref{prop:fp} & \ref{prop4.9}& $[5,8]$ & \cite{JeonPark05} & ${\scriptstyle \gon_\Q}$ &
    $149$ & $12$ & $6$ &\ref{prop4.13} & \ref{prop4.10}& $6$ & \ref{prop4.13} & ${\scriptstyle \gon_\Q}$ \\
    $150$ & $19$ & $8$ &\ref{prop:fp} & \ref{prop4.9} & $[6,8]$ & \ref{Cgon6cs} & ${\scriptstyle \gon_\Q}$ & 
    $151$ & $12$ & $6$ & \ref{prop:fp} & \ref{prop4.10} & $6$ & \ref{prop:gon_ge_6} & ${\scriptstyle \gon_\Q}$ \\
    $152$ & $17$ & $8$ & \ref{prop:fp} & \ref{tetragonal_quotients} & $[6,8]$ & \ref{Cgon6cs} & ${\scriptstyle \gon_\Q}$ &
    $153$ & $15$ & $8$ & \ref{prop:fp} & \ref{prop4.11} & $[5,8]$ & \cite{JeonPark05} & ${\scriptstyle \gon_\Q}$ \\
    $154$ & $21$ & $[8,12]$ & \ref{prop:fp} & \ref{prop4.9} & $[5,12]$ & \cite{JeonPark05} & ${\scriptstyle \gon_\Q}$ &
    $155$ & $15$ & $6$ & ${\scriptstyle \gon_\C}$ & \ref{cor4.16_UB} & $6$ & \ref{Cgon6cs} & ${\scriptstyle \gon_\Q}$\\
    $156$ & $23$ & $8$ & \ref{prop4.20} & \ref{prop4.9} & $8$ & \ref{prop4.20} & ${\scriptstyle \gon_\Q}$ &
    $157$ & $12$ & $8$ & \ref{prop:fp} & \ref{tetragonal_quotients} & $[5,7]$ & \cite{JeonPark05} & \ref{prop_A.1}\\
    $158$ & $19$ & $8$ & \ref{prop4.20} & \ref{prop4.11} & $8$ & \ref{prop4.20} & ${\scriptstyle \gon_\Q}$ &
    $159$ & $17$ & $6$ &  ${\scriptstyle \gon_\C}$ & \ref{cor4.16_UB} & $6$ & \ref{Cgon6cs} & ${\scriptstyle \gon_\Q}$\\
    $160$ & $17$ & $8$ & \ref{prop:fp} & \ref{prop4.9} & $[6,8]$ & \ref{prop:gon_ge_6} & ${\scriptstyle \gon_\Q}$ &
    $161$ & $15$ & $8$ & \ref{cor4.18} & \ref{prop4.10} & $6$ & \ref{cor4.18} & \ref{prop4.10} \\
    $162$ & $16$ & $6$ & \ref{prop:fp} & \ref{prop4.13_UB} & $[5,6]$ & \cite{JeonPark05} & ${\scriptstyle \gon_\Q}$ &
    $163$ & $13$ & $[7,8]$ & \ref{prop:fp} & \ref{tetragonal_quotients} & $[5,8]$ & \cite{JeonPark05} & ${\scriptstyle \gon_\Q}$\\
    $164$ & $19$ & $6$ & \ref{prop4.13} & \ref{prop4.13_UB} & $6$ & \ref{prop4.13} & ${\scriptstyle \gon_\Q}$ &
    $165$ & $21$ & $8$ & \ref{prop4.20} & \ref{prop4.11} & $8$ & \ref{prop4.20} & ${\scriptstyle \gon_\Q}$\\
    $166$ & $20$ & $8$ & \ref{prop4.20} & \ref{prop4.11} & $8$ & \ref{prop4.20} & ${\scriptstyle \gon_\Q}$&
    $167$ & $14$ & $4$ & \cite{HasegawaShimura_trig} & \cite{HasegawaShimura_trig} & $4$ & \cite{JeonPark05} & \cite{JeonPark05}\\
    $168$ & $25$ & $8$ & \ref{prop4.20} & \ref{prop4.11} & $8$ & \ref{prop4.20} & ${\scriptstyle \gon_\Q}$ &   
    $169$ & $8$ & $6$ & \ref{prop:fp} & \ref{prop4.10} & $5$ & \cite{JeonPark05} & \ref{prop_A.1}\\
 
    \hline
\end{tabular}\\
\vspace{5mm}
\caption{Gonalities of $X_0(N)$, for $94\leq N\leq 169$.}
\label{tab:main2}
\end{table}

\clearpage
\begin{table}[ht]
\centering
\begin{tabular}{|c|c|c|c|c|c|c|c||c|c|c|c|c|c|c|c|}
  \hline
  $N$ & $g$ & ${\scriptstyle \gon_\Q}$  & LB & UB & ${\scriptstyle \gon_\C}$ & LB & UB & $N$ & $g$ & ${\scriptstyle \gon_\Q}$ & LB & UB & ${\scriptstyle \gon_\C}$ & LB & UB\\
  \hline

    $171$ & $17$ & $8$ & \ref{prop4.20} & \ref{prop4.11} & $8$ & \ref{prop4.20} & ${\scriptstyle \gon_\Q}$ &
    $173$ & $14$ & $8$ & \ref{cor4.18} & \ref{prop4.10} & $6$ & \ref{cor4.18} & \ref{prop4.10}\\
    $175$ & $15$ & $8$ & \ref{prop:fp} & \ref{prop4.9} & $[6,8]$ & \ref{Cgon6cs} & ${\scriptstyle \gon_\Q}$ & 
    $176$ & $19$ & $8$ & \ref{prop:fp} & \ref{prop4.9} & $[6,8]$ & \ref{prop:gon_ge_6} & ${\scriptstyle \gon_\Q}$\\
    $177$ & $19$ & $8$ & \ref{cor4.18} & \ref{prop4.10} & $6$ & \ref{cor4.18} & \ref{prop4.10} &
    $179$ & $15$ & $6$ & \ref{prop:fp} & \ref{prop4.10} & $[5,6]$ & \cite{JeonPark05} & ${\scriptstyle \gon_\Q}$\\
    $181$ & $14$ & $6$ & \ref{prop:fp} & \ref{prop4.13_UB} & $[5,6]$ & \cite{JeonPark05} & ${\scriptstyle \gon_\Q}$ &
    $183$ & $19$ & $8$ & \ref{prop4.20} & \ref{tetragonal_quotients} & $8$ & \ref{prop4.20} & ${\scriptstyle \gon_\Q}$ \\
    $184$ & $21$ & $8$ & \ref{prop4.20} & \ref{prop4.9} & $8$ & \ref{prop4.20} & ${\scriptstyle \gon_\Q}$ &
    $185$ & $17$ & $8$ & \ref{prop4.20} & \ref{tetragonal_quotients} & $8$ & \ref{prop4.20} & ${\scriptstyle \gon_\Q}$ \\
    $188$ & $22$ & $8$ & \ref{cor4.18} & \ref{prop4.10} & $6$ & \ref{cor4.18} & \ref{prop4.10} &
    $190$ & $27$ & $8$ & \ref{prop4.20} & \ref{prop4.11} & $8$ & \ref{prop4.20} & ${\scriptstyle \gon_\Q}$ \\
    $191$ & $16$ & $4$ & \cite{HasegawaShimura_trig} & \cite{HasegawaShimura_trig} & $4$ & \cite{JeonPark05} & \cite{JeonPark05} &
    $192$ & $21$ & $8$ & \ref{prop:fp} & \ref{prop4.9} & $[6,8]$ & \cite{HasegawaShimura_trig} & ${\scriptstyle \gon_\Q}$ \\
    $195$ & $25$ & $8$ & \ref{prop4.20} & \ref{prop4.11} & $8$ & \ref{prop4.20} & ${\scriptstyle \gon_\Q}$ &
    $196$ & $17$ & $8$ & \ref{prop:fp} & \ref{prop4.9} & $[6,8]$ & \ref{prop:gon_ge_6} & ${\scriptstyle \gon_\Q}$ \\
    $197$ & $16$ & $8$ & \ref{prop4.20} & \ref{tetragonal_quotients} & $8$ & \ref{prop4.20} & ${\scriptstyle \gon_\Q}$ &
    $199$ & $16$ & $8$ & \ref{cor4.18} & \ref{cor4.18} & $6$ & \ref{cor4.18} & ${\scriptstyle \gon_\Q}$ \\
    $200$ & $19$ & $8$ & \ref{prop:fp} & \ref{prop4.9} & $[6,8]$ & \ref{prop:gon_ge_6} & ${\scriptstyle \gon_\Q}$ &
    $203$ & $19$ & $8$ & \ref{prop4.20} & \ref{tetragonal_quotients} & $8$ & \ref{prop4.20} & ${\scriptstyle \gon_\Q}$ \\
    $205$ & $19$ & $8$ & \ref{prop4.20} & \ref{prop4.11} & $8$ & \ref{prop4.20} & ${\scriptstyle \gon_\Q}$ &
    $206$ & $25$ & $8$ & \ref{prop4.20} & \ref{prop4.11} & $8$ & \ref{prop4.20} & ${\scriptstyle \gon_\Q}$ \\
    $209$ & $19$ & $8$ & \ref{prop4.20} & \ref{prop4.11} & $8$ & \ref{prop4.20} & ${\scriptstyle \gon_\Q}$ &
    $211$ & $17$ & $8$ & \ref{prop:fp} & \ref{tetragonal_quotients} & $[6,8]$ & \ref{Cgon6cs} & ${\scriptstyle \gon_\Q}$ \\
    $213$ & $23$ & $8$ & \ref{prop4.20} & \ref{prop4.11} & $8$ & \ref{prop4.20} & ${\scriptstyle \gon_\Q}$ &
    $215$ & $21$ & $6$ & ${\scriptstyle \gon_\C}$ & \ref{prop4.10} & $6$ & \ref{prop:gon_ge_6} & ${\scriptstyle \gon_\Q}$ \\
    $221$ & $19$ & $8$ & \ref{prop4.20} & \ref{prop4.11} & $8$ & \ref{prop4.20} & ${\scriptstyle \gon_\Q}$ &
    $223$ & $18$ & $8$ & \ref{prop:fp} & \ref{tetragonal_quotients} & $[7,8]$ & \ref{Cgon6cs} & ${\scriptstyle \gon_\Q}$ \\
    $227$ & $19$ & $6$ & \ref{prop4.13} & \ref{prop4.13_UB} & $6$ & \ref{prop4.13} & ${\scriptstyle \gon_\Q}$ &
    $239$ & $20$ & $6$ & \ref{prop4.13} & \ref{prop4.13_UB} & $6$ & \ref{prop4.13} & ${\scriptstyle \gon_\Q}$ \\
    $251$ & $21$ & $8$ & \ref{cor4.18} & \ref{prop4.10} & $6$ & \ref{cor4.18} & \ref{prop4.10} &
    $263$ & $22$ & $8$ & \ref{prop4.20} & \ref{tetragonal_quotients} & $8$ & \ref{prop4.20} & ${\scriptstyle \gon_\Q}$ \\
    $269$ & $22$ & $8$ & \ref{prop4.20} & \ref{tetragonal_quotients} & $8$ & \ref{prop4.20} & ${\scriptstyle \gon_\Q}$ &
    $271$ & $22$ & $10$ & \ref{Qgon10} & \ref{Qgon10} & $8$ & \ref{Qgon10} & \ref{Qgon10} \\
    $279$ & $29$ & $8$ & \ref{prop4.20} & \ref{prop4.11} & $8$ & \ref{prop4.20} & ${\scriptstyle \gon_\Q}$ &
    $284$ & $34$ & $8$ & \ref{prop4.20} & \ref{prop4.11} & $8$ & \ref{prop4.20} & ${\scriptstyle \gon_\Q}$ \\
    $287$ & $27$ & $8$ & \ref{prop4.20} & \ref{prop4.11} & $8$ & \ref{prop4.20} & ${\scriptstyle \gon_\Q}$ &
    $299$ & $27$ & $8$ & \ref{prop4.20} & \ref{prop4.11} & $8$ & \ref{prop4.20} & ${\scriptstyle \gon_\Q}$ \\
    $311$ & $26$ & $8$ & \ref{cor4.18} & \ref{prop4.10} & $6$ & \ref{cor4.18} & \ref{prop4.10} &
    $359$ & $30$ & $8$ & \ref{prop4.20} & \ref{tetragonal_quotients} & $8$ & \ref{prop4.20} & ${\scriptstyle \gon_\Q}$\\
    \hline
\end{tabular}\\

\vspace{5mm}
\caption{Gonalities of some $X_0(N)$, for $N>170$.}
\label{tab:large}
\end{table}

\bibliographystyle{siam}
\bibliography{bibliography1}

\end{document}